\numberwithin{equation}{section}
\newtheorem{theorem}{Theorem}[section]
\newtheorem{lemma}[theorem]{Lemma}
\newtheorem{proposition}[theorem]{Proposition}
\newtheorem{corollary}[theorem]{Corollary}
\theoremstyle{definition}
\newtheorem{definition}[theorem]{Definition}
\newtheorem{example}[theorem]{Example}
\theoremstyle{remark}
\newtheorem{remark}[theorem]{\bf{Remark}}
\newcommand{\Hom}{{\rm{Hom}}}
\newcommand{\C}{{\mathbb{C}}}
\newcommand{\Z}{{\mathbb{Z}}}
\newcommand{\N}{{\mathbb{N}}}
\newcommand{\A}{{\mathbb{A}}}
\newcommand{\<}{{\langle}}
\renewcommand{\>}{{\rangle}}
\newcommand{\BB}{\overline{B}}
\newcommand{\BG}{{B^{\Gamma}}}
\newcommand{\CC}{{\mathcal{C}}}
\newcommand{\CH}{{\mathcal{H}}}
\newcommand{\CD}{{\mathcal{D}}}
\newcommand{\cD}{{\mathcal{D}}}
\newcommand{\CG}{{\cdot_{\Gamma}}}
\newcommand{\CJ}{{\mathcal{J}}}
\newcommand{\hJ}{{\hat{\mathcal{J}}}}
\newcommand{\HJ}{{\mathbb{J}}}
\newcommand{\BJ}{{\mathbb{J}}}
\newcommand{\cI}{{\mathcal{I}}}
\newcommand{\CL}{{\mathcal{L}}}
\newcommand{\cL}{{\mathcal{L}}}
\newcommand{\BL}{{\mathbb{L}}}
\newcommand{\CM}{{\mathcal{M}}}
\newcommand{\CN}{{\mathcal{N}}}
\newcommand{\wedgeq}{{\wedge\kern-5pt\cdot}}
\newcommand{\DG}{{\Delta^{\Gamma}}}
\newcommand{\GH}{{\Gamma^{\#}}}
\renewcommand{\ker}{{\rm{ker}}}
\newcommand{\di}{{\diamond}}
\newcommand{\p}{\hat{+}}
\newcommand{\m}{\hat{-}}
\newcommand{\tens}{\otimes}
\newcommand{\ot}{\otimes}
\newcommand{\one}[1]{{#1}{}_{\scriptscriptstyle{(1)}}}
\newcommand{\two}[1]{{#1}{}_{\scriptscriptstyle{(2)}}}
\newcommand{\id}{{\rm id}}
\newcommand{\z}{{}_{\scriptscriptstyle{(0)}}}
\renewcommand{\o}
{{}_{\scriptscriptstyle{(1)}}}
\newcommand{\mo}{{}_{\scriptscriptstyle{(-1)}}}
\renewcommand{\t}{{}_{\scriptscriptstyle{(2)}}}
\newcommand{\mt}{{}_{\scriptscriptstyle{(-2)}}}
\renewcommand{\th}{{}_{\scriptscriptstyle{(3)}}}
\newcommand{\fo}{{}_{\scriptscriptstyle{(4)}}}
\newcommand{\fiv}{{}_{\scriptscriptstyle{(5)}}}
\newcommand{\si}{{}_{\scriptscriptstyle{(6)}}}
\newcommand{\rz}{{}_{\scriptscriptstyle{[0]}}}
\newcommand{\rmo}
{{}_{\scriptscriptstyle{[-1]}}}
\newcommand{\ro}
{{}_{\scriptscriptstyle{[1]}}}
\newcommand{\rt}
{{}_{\scriptscriptstyle{[2]}}}
\newcommand{\rth}{{}_{\scriptscriptstyle{[3]}}}
\newcommand{\extd}{{\rm d}}
\renewcommand{\imath}{\mathrm{i}}
\newcommand{\la}{{\triangleright}}
\begin{document}

\title{Quantum Jet Hopf algebroids by cotwist}


\author{Xiao Han and Shahn Majid}
\address{Queen Mary University of London,
School of Mathematical Sciences, Mile End Rd, London E1 4NS, UK}

\email{x.h.han@qmul.ac.uk,    s.majid@qmul.ac.uk}


\begin{abstract} We introduce a cotwist construction for  Hopf algebroids that also entails cotwisting or `quantisation' of the base and which is dual to a previous twisting construction of P. Xu. Whereas the latter applied the construction to the algebra of differential operators on a classical base $B$, we show that the dual of this is the algebra of sections $\CJ(B)$ of the jet bundle and hence that the latter forms a Hopf algebroid. This is constructed for commutative algebras $B$ in a pro-object setting via quotients of the pair Hopf algebroid $B\tens B$ and can then be deformed by our cotwist construction to give a possibly noncommutative jet Hopf algebroid over a noncommutative base. We also observe in the commutative case that $\CJ^k(B)$ for jets of order $k$ can be identified with $\CJ^1(B_k)$ where $B_k$ denotes $B$ equipped with a certain noncommutative first order differential calculus. \end{abstract}
\maketitle

\begin{quote}  2010 Mathematics Subject Classification: 16T05, 20G42, 46L87, 58B32\end{quote}

\section{Introduction}\label{secintro}

A long-standing open problem in noncommutative geometry is the construction of jet bundles over a unital potentially noncommutative algebra $B$ in the role of `coordinate algebra'. Recently, there was some progress for $B$ equipped with a differential graded algebra $(\Omega_B,\extd)$ of `differential forms', see \cite{MaSim} as well as a subsequent approach via a jet endofunctor\cite{Flo} of the category of $B$-modules. Of most interest is the split case where there is a jet prolongation map $j_\infty: B\to \CJ^{\infty}(B)$ that sends a coordinate `function' to a section of the jet bundle over $B$, which required in \cite{MaSim} the additional data of a flat connection with certain properties. One also has some partial results for the jet bundle sections $\CJ^\infty(E)$ associated to the sections $E$ of vector bundle in the form of a (finitely generated projective) $B$-bimodule. In the present work, we introduce a third approach using Hopf algebroids and quantisation via cocycle cotwists of a certain kind. We will see that the jet prolongation map then appears naturally as the source map (and there is a similar target map) of the Hopf algebroid. Bialgebroids and Hopf algebroids have recently been of interest in their own right, see \cite{Boehm} and many recent works.

Our starting point is that if $M$ is a smooth manifold, it is known that the algebra of differential operators $\CD(M)$ is a noncommutative cocommutative bialgebroid, see \cite{Xu}. Hence one could expect that there is some kind of dual bialgebroid or Hopf algebroid over the same base $B=C^\infty(M)$, which is essentially the jet sections algebra $\CJ^\infty(M)$ and which in our algebraic form will indeed  become a Hopf algebroid. To explain this, recall \cite[Prop 1.9]{Ver} that if $E,F$ are sections of vector bundles over $M$, differential operators $E\to F$ of degree $k$ can be identified with bundle maps
\[ \CD_k(E,F)= \Hom_{C^\infty(M)}(\CJ^k(E),F),\]
where $\CJ^k(E)$ denotes sections of the relevant $k$-th jet bundle. Notice that if $E=F=C^\infty(M)$ as sections of a trivial bundle then we get
\[ \CD_k(M)=\Hom_{C^\infty(M)}(\CJ^k(M),C^\infty(M))\]
 for the underlying jet bundles on $M$ itself. This says that in each degree, differential operators are the sections of the dual bundle to the jet bundle. Then $\CD(M):=\CD_\infty(M)$ suggests that some version of $\CJ^\infty(M)$ is a commutative noncocomutative bialgebroid or Hopf algebroid dually paired to this. We show this in Section~\ref{secpair} in an algebraic formulation $\CD(B)$ for differential operators and $\CJ^\infty(B)$ (more simply denoted by $\CJ(B)$ in Section~\ref{secpair}) for jets, following ideas from \cite{Ver}. Here $\CJ^k(B)$ turns out to be a quotient of the pair Hopf algebroid $B\tens B$. This `classical' level of the theory where $B$ is a commutative algebra is also reminiscent of Connes' tangent groupoid\cite{Car} as a `thickenning' of the tangent groupoid. We note that the  approach of \cite{MaSim} also equipped $\CJ^\infty(B)$, prior to completion, with the structure of a braided-Hopf algebra in the category  ${}_B\CM_B$ of $B$-modules with respect to a geometric braiding as part of the additional data. In the case of commutative $B$, this  can be viewed as a Hopf algebroid over $B$ related in the geometric case to functions on the tangent bundle $TM$ that are polynomial on the fibre. Another observation  in the case of $B$ commutative, see Remark~\ref{remJ1}, is  that one can think of $\CJ^k(B)$ as $\CJ^1(B_k)$ where $B_k$ is the same algebra  but equipped with a new differential structure $\Omega(B_k)$ which is noncommutative for $k>1$.

Turning now to the main results of the paper, the idea is that Ping Xu in \cite{Xu} showed how to twist $\CD(M)$ to a noncommutative noncocommutative Hopf algebroid $\CD^F(M)$ while at the same time twisting $B=C^\infty(M)$ to  a noncommutative algebra $B^F$, with a respect to a `cocycle' $F$. With this in mind, our main result is to introduce a dual version, i.e. a cotwist by $\Gamma$
which turns a Hopf algebroid $\CL$ over base $B$ (which could be noncommutative) into a new one $\CL^\Gamma$ over a new base $B^\Gamma$, see Theorem~\ref{thm. twisted Hopf algebroid}.  This cotwist construction is more general than the previous `Drinfeld cotwist' of Hopf algebroids in \cite{HM22}, but includes the latter as shown in Theorem~\ref{thm. comparison}. The previous Drinfeld cotwist for Hopf algebroids did not change the base and hence would not serve our purposes of constructing examples for noncommutative geometry by cotwist. Note that even finding  suitable axioms for $\Gamma$ is not at all straightforward precisely because the base is also being changed in the construction. We show that when formulated and related correctly, our construction is indeed dually paired with Xu's twist of the dual Hopf algebroid, see Theorem~\ref{thm. twisted dual pairing}. We similarly show in  Lemma~\ref{lem. comodule equ} that the category of $\CL$-comodules over $B$ is monoidally equivalent to that of $\CL^\Gamma$-comodules over $B^\Gamma$.

Our cotwist construction and the above remarks about duality then imply that we can start with a classical jet Hopf algebroid $\CJ^\infty(B)$ over a commutative base such as $B=C^\infty(M)$ and cotwist to obtain noncommutative noncocommutative jet Hopf algebroids $\CJ^\infty(B)^\Gamma$ over a noncommutative base $B^\Gamma$, see Theorem~\ref{thm:quant}. Moreover, as \cite{Xu} also showed how to provide examples of cocycles in his required form (they are closely linked with Kontsevich quantization), dualizing these in principle provides the required $\Gamma$ so that one can expect an abundance of examples.

As well as these main results, we study the cotwist construction further and show in Section~\ref{secgroupoid} that the collection of Hopf algebrioids equipped with invertible cocycles is itself a groupoid. A follow-up work \cite{HS25} applies this cotwisting theory to the Ehresmann-Schauenburg Hopf algebroid of a quantum principle bundle or quantum homogeneous space.  We also note that there are other approaches to Hopf algebroids of differential operators, notably by Ghobadi\cite{Gho}. Dualising the latter will provide another class of quantum jet bundles in some generality, to be looked at elsewhere. Noncommutative jet bundles  are also of interest for mathematical physics in order to define variational calculus and hence classical and quantum field theory on a noncommutative spacetime. For this, one will need a $*$ involution, for which a formulation of $*$-Hopf algebroids has recently been proposed in \cite{BHM24}. Such  $*$-structures on our jet Hopf algebroids is another interesting direction for further work.

\section{Preliminaries}\label{secpre}

In this section, we will recall some basic definitions and notation.

\subsection{Balanced tensor products} Let $B$ be a unital algebra over a field $k$. We denote the opposite algebra by $\BB$ and let $B\to \BB$, $b\mapsto\Bar{b}$ for any $b\in B$ be the obvious $k$-algebra antiisomorphism. Define $B^{e}:=B\ot \BB$, so $B$ and $\BB$ are commuting subalgebras of $B^{e}$. Let $M, N$ be $B^{e}$-bimodules. Following \cite{schau1}, it is  useful to define
\begin{align*}
    M\diamond_{B} N:=\int_{b} {}_{\Bar{b}}M\ot {}_{b}N:=&M\ot N/\langle \Bar{b}m\ot n-m\ot bn|b\in B, m\in M, n\in N\rangle,\\
    M\ot_{B} N:=\int_{b} M_{b}\ot {}_{b}N:=&M\ot N/\langle mb\ot n-m\ot bn|b\in B, m\in M, n\in N\rangle,\\
    M\ot_{\BB} N:=\int_{b} M_{\Bar{b}}\ot {}_{\Bar{b}}N:=&M\ot N/\langle m\Bar{b}\ot n-m\ot \Bar{b}n|b\in B, m\in M, n\in N\rangle.
\end{align*}
For convenience, we also define $N\ot^{B}M=\int_{b} {}_{b}N\ot M_{b}$ and  $N\ot^{\BB}M=\int_{b} {}_{\Bar{b}}N\ot M_{\Bar{b}}$. Moreover, we let
\begin{align*}
    \int^{b}M_{\Bar{b}}\ot N_{b}:=\{\sum_{i}m_{i}\ot n_{i}\in M\ot N\quad |\quad m_{i}\Bar{b}\ot n_{i}=m_{i}\ot n_{i}b, \forall b\in B\}.
\end{align*}
 For any $\BB$-bimodule $M$ and any $B$-bimodule $N$, we also define
\begin{align*}
    M\times_{B}N:=\int^{a}\int_{b} {_{\Bar{b}}M_{\Bar{a}}}\ot {_bN_a}
\end{align*}
as the {\em Takeuchi product} of $M$ and $N$. If $P$ is a $B^{e}$-bimodule, then $P\times_{B}N$ is a $B$-bimodule with $B$ acting on $P$. Similarly, $M\times_{B}P$ is a $\BB$-bimodule with $\BB$ acting on $P$. If both $M$ and $N$ are $B^{e}$-bimodules, then $M\times_{B} N$ is also a $B^{e}$-bimodule. However, the product $\times_{B}$ is neither associative nor unital in the category of $B^{e}$-bimodules. For any $M,N, P\in {}_{B^{e}}\CM_{B^{e}}$, we let
\begin{align*}
    M\times_{B}P\times_{B}N:=\int^{a,b}\int_{c,d} {}_{\Bar{c}}M_{\Bar{a}}\ot {}_{c,\Bar{d}}P_{a, \Bar{b}}\ot {}_{d}N_{b},
\end{align*}
where $\int^{a,b}:=\int^{a}\int^{b}$ and $\int_{c,d}:=\int_{c}\int_{d}$. There are maps
\begin{align*}
    &\alpha:(M\times_{B}P)\times_{B} N\to M\times_{B}P\times_{B}N,\quad m\ot p\ot n\mapsto m\ot p\ot n,\\
    &\alpha':M\times_{B}(P\times N)\to M\times_{B}P\times_{B}N,\quad m\ot p\ot n\mapsto m\ot p\ot n, 
\end{align*}
but neither $\alpha$ nor $\alpha'$ are isomorphisms  in general. For the rest of the paper, we will assume that all $B$-module and $\BB$-module structures are faithfully flat. In particular, this implies that $\alpha$ and $\alpha'$ are in fact isomorphisms.

\subsection{Left bialgebroids}\label{secB}\cite{Boehm,BW,schau1}  Let $B$ be a unital algebra over $k$.
A {\em $B$-ring}  means a unital algebra in the monoidal category ${}_B\CM_B$ of $B$-bimodules. Likewise,  a  {\em $B$-coring} is a coalgebra in ${}_B\CM_B$. Morphisms of $B$-(co)rings are morphisms in the category ${}_B\CM_B$ that preserve the (co)ring structures. Let $\mathcal{L}$ be an algebra and $s:B\to \mathcal{L}$,  $t :B^{op}\to \mathcal{L}$ algebra maps such that
$s(a)$ commutes with $t(b)$ for all $a,b\in B$. 
Then $\mathcal{L}$ is a left $B^e$-module by
$(a\tens b).X=s(a)\, t(b)\, X$ for all $X\in \mathcal{L}$, or equivalently $\mathcal{L}$ is a $B$-bimodule by
\begin{align}\label{eq:rbgd.bimod}
    a.X.b:=s(a)\, t(b)\, X  = a\,\overline{b}\,X,
\end{align}
where we adopt the convention that $a\in B$ can be viewed in $\CL$ via $s$ and $\bar b\in \overline{B}$ can be viewed in $\CL$ via $t$, and we take the product there.

\begin{definition}\label{def:left.bgd} Let $B$ be a unital algebra. A left bialgebroid over $B$ (or left $B$-bialgebroid) is an algebra $\cL$ with (`source' and `target') algebra maps  $s:B\to \cL$ and $t:B^{op}\to \cL$ whose images commute (making $\CL$ a $B^e$-ring) and a $B$-coring for the bimodule structure (\ref{eq:rbgd.bimod}) which is compatible in the sense
\begin{itemize}
\item[(i)] The coproduct $\Delta$ corestricts to an algebra map  $\cL\to \cL\times_{B} \cL$, where the Takeuchi product
\begin{equation*} \cL\times_{B} \cL :=\{\ \sum_i X_i \ot_{B} Y_i\ |\ \sum_i X_i\,\overline{b} \ot_{B} Y_i=
\sum_i X_i \ot_{B}  Y_i  \,b,\quad \forall b\in B\ \}\subseteq \cL\tens_{B}\cL
\end{equation*}
 is regarded as algebra via factorwise multiplication.
\item[(ii)] The counit $\varepsilon$ is a left character in the sense
\begin{equation*}\varepsilon(1_{\cL})=1_{B},\quad \varepsilon(X\,\varepsilon(Y))=\varepsilon(XY)=\varepsilon(X\overline{\varepsilon(Y)})\end{equation*}
for all $X,Y\in \cL$. 
\end{itemize}
\end{definition}
We will often use  {\em Sweedler notation}  $\Delta X= X\o\tens X\t$, where the numerical subscripts indicate a sum of such terms (as commonly used for Hopf algebras\cite{Swe,Ma:book}).

\begin{definition}\label{def. left Hopf and anti-Hopf algebroids}

A left $B$-bialgebroid $\cL$ is a left Hopf algebroid (\cite{schau1}, Thm and Def 3.5.) if
\[\lambda: \cL\ot_{\BB}\cL\to \cL\diamond_{B}\cL,\quad
    \lambda(X\ot Y)=X\o \ot X\t Y\]
is invertible. A left $B$-bialgebroid $\cL$ is an anti-left Hopf algebroid if
\[\mu: \cL\ot_{B}\cL\to \cL\diamond_{B}\cL,\quad
    \mu(X\ot Y)=X\o Y\ot_{B}X\t\]
is invertible. $\cL$ is called a Hopf algebroid if it is both a left and anti-left Hopf algebroid.
\end{definition}

We adopt the following shorthand for the `(anti-)translation maps' 
\begin{equation}\label{X+-} X_{+}\ot_{\BB}X_{-}:=\alpha(X):=\lambda^{-1}(X\diamond_{B}1),\end{equation}
\begin{equation}\label{X[+][-]} X_{[+]}\ot_{B}X_{[-]}:=\beta(X):=\mu^{-1}(1\diamond_{B}X).\end{equation}

We recall from \cite[Prop.~3.7]{schau1} that for a left Hopf algebroid, and all $X, Y\in \cL$ and $a,a',b,b'\in B$,
\begin{align}
    \one{X_{+}}\diamond_{B}{}\two{X_{+}}X_{-}&=X\diamond_{B}{}1\label{equ. inverse lamda 1},\\
    \one{X}{}_{+}\ot_{\BB}\one{X}{}_{-}\two{X}&=X\ot_{\BB}1\label{equ. inverse lamda 2},\\
    (XY)_{+}\ot_{\BB}(XY)_{-}&=X_{+}Y_{+}\ot_{\BB}Y_{-}X_{-}\label{equ. inverse lamda 3},\\
    1_{+}\ot_{\BB}1_{-}&=1\ot_{\BB}1\label{equ. inverse lamda 4},\\
    \one{X_{+}}\diamond_{B}{}\two{X_{+}}\ot_{\BB}X_{-}&=\one{X}\diamond_{B}{}\two{X}{}_{+}\ot_{\BB}\two{X}{}_{-}\label{equ. inverse lamda 5},\\
    X_{+}\ot\one{X_{-}}\ot{}\two{X_{-}}&=X_{++}\ot X_{-}\ot{}X_{+-}\in\int^{a,b}\int_{c,d}{}_{\Bar{a}}\cL_{\Bar{c}}\ot {}_{\Bar{d}}\cL_{\Bar{b}}\ot {}_{\Bar{c},d}\cL_{b,\Bar{a}}
    \label{equ. inverse lamda 6},\\
    X&=X_{+}\overline{\varepsilon(X_{-})}\label{equ. inverse lamda 7},\\
    X_{+}X_{-}&=\varepsilon(X)\label{equ. inverse lamda 8},\\
    aX_{+}b\ot_{\BB}b'X_{-}a'&=(a\Bar{a'}Xb\Bar{b'})_{+}\ot_{\BB}(a\Bar{a'}Xb\Bar{b'})_{-}\label{equ. inverse lamda 9},\\
    \Bar{b}X_{+}\ot_{\BB}X_{-}&=X_{+}\ot_{\BB}X_{-}\Bar{b}\label{equ. inverse lamda 10}.
\end{align}

\begin{definition}
   Given a left Hopf algebroid $\cL$ over $B$, a (left) Hopf ideal $\cI$ of $\cL$ is  a  left $B^{e}$-submodule of $\cL$ such that
\begin{itemize}
    \item [(1)] $\cI$ is an ideal of $\cL$;
    \item[(2)] $\cI$ is a coideal of $\cL$, i.e., $\Delta(I)\subseteq I\diamond_{B}\cL+\cL\diamond_{B}\cI$;
    \item[(3)] For all $i\in\cI$,
    \[i_{+}\ot_{\overline{B}}i_{-}\in \cI\ot_{\overline{B}}\cL+\cL\ot_{\overline{B}}\cI. \]
\end{itemize}
\end{definition}
\begin{proposition}\cite{Gho}\label{prop. quotient Hopf algebroids}
    If $\cL$ is a left Hopf algebroid over $B$ and $\cI$ is a Hopf ideal of $\cL$ then $\cL/\cI$ is a left Hopf algebroid over $B$.
\end{proposition}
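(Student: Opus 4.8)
The plan is to push every piece of structure on $\cL$ through the quotient map $\pi\colon\cL\to\cL/\cI$ and to verify at each stage that the defining conditions (1)--(3) of a Hopf ideal are exactly what is needed for well-definedness; invertibility of the translation maps will then come for free by functoriality.

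First I would handle the bialgebroid structure. Since $\cI$ is an ideal by (1), the quotient $\cL/\cI$ is an algebra; since $\cI$ is a left $B^{e}$-submodule, the source and target maps descend to $\bar s=\pi\circ s$ and $\bar t=\pi\circ t$, and the commutation relation between them survives, so $\cL/\cI$ is an algebra in ${}_{B^{e}}\CM_{B^{e}}$. The coideal condition (2), together with $\varepsilon(\cI)=0$ (part of being a coideal of the $B$-coring), makes $\bar\varepsilon$ and $\bar\Delta$ well defined: the inclusion $\Delta(\cI)\subseteq\cI\diamond_{B}\cL+\cL\diamond_{B}\cI$ says precisely that $\Delta$ carries $\cI$ into the kernel of $\pi\diamond\pi$. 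Here the blanket faithful flatness assumption is needed both to identify that kernel and to guarantee that $\bar\Delta$ still has image in the Takeuchi product $(\cL/\cI)\times_{B}(\cL/\cI)$ and remains an algebra map. Coassociativity, counitality and the counit multiplicativity are then inherited by applying $\pi$ to the corresponding identities for $\cL$, so $\cL/\cI$ is a left $B$-bialgebroid.

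Next I would descend the Hopf structure. Define $\bar\lambda(X\ot Y)=\one{X}\ot\two{X}Y$ on the quotient: it is well defined because when $Y\in\cI$ the ideal property (1) gives $\two{X}Y\in\cI$, and when $X\in\cI$ the coideal property (2) puts every Sweedler term into $\cI\diamond_{B}\cL+\cL\diamond_{B}\cI$. For the inverse, using \eqref{equ. inverse lamda 1} one checks the general formula
\[ \lambda^{-1}(X\diamond_{B}Z)=X_{+}\ot_{\BB}X_{-}Z, \]
so on a generator $X\diamond_{B}i$ with $i\in\cI$ the ideal property gives $X_{+}\ot_{\BB}X_{-}i\in\cL\ot_{\BB}\cI$, while on $i\diamond_{B}Z$ the first half of condition (3), $i_{+}\ot_{\BB}i_{-}\in\cI\ot_{\BB}\cL+\cL\ot_{\BB}\cI$, yields $i_{+}\ot_{\BB}i_{-}Z$ in the same submodule. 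Hence $\lambda^{-1}$ descends to $\overline{\lambda^{-1}}$, and since $\bar\lambda$ and $\overline{\lambda^{-1}}$ are both induced from $\lambda,\lambda^{-1}$ by $\pi$, they are automatically mutually inverse. The symmetric argument for $\mu$, $\mu^{-1}$ using the analogue of the above formula and the second half of condition (3), $i_{[-]}\ot^{B}i_{[+]}\in\cI\ot^{B}\cL+\cL\ot^{B}\cI$, shows $\cL/\cI$ is also anti-left Hopf, completing the proof.

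The step I expect to be the main obstacle is exactly the well-definedness of the inverse translation maps $\overline{\lambda^{-1}}$ and $\overline{\mu^{-1}}$. Condition (3) only controls $i_{+}\ot_{\BB}i_{-}$ (and $i_{[-]}\ot^{B}i_{[+]}$) for $i\in\cI$, so the essential computational input is the explicit formula $\lambda^{-1}(X\diamond_{B}Z)=X_{+}\ot_{\BB}X_{-}Z$, which reduces the behaviour of $\lambda^{-1}$ on the whole generating submodule $\cI\diamond_{B}\cL+\cL\diamond_{B}\cI$ to this single case; and making the various tensor products over $B$ and $\BB$ interact correctly with $\pi$ is where the faithful flatness hypothesis is genuinely used.
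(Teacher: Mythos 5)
The paper does not actually prove this proposition --- it is imported from \cite{Gho} without proof --- so there is no in-house argument to compare against; judged on its own terms, your proof is correct and is essentially the standard argument one would expect to find in that reference. Your key computational input is right: from (\ref{equ. inverse lamda 1}) and the fact that right multiplication by $Z$ in the second leg is a well-defined operation on $\cL\diamond_{B}\cL$, one gets $\lambda^{-1}(X\diamond_{B}Z)=X_{+}\ot_{\BB}X_{-}Z$, and this is exactly what reduces the descent of $\lambda^{-1}$ to the two cases controlled by the ideal property and by condition (3). For $\mu$ the analogue you invoke does exist but has a slightly different shape --- the extra factor lands in the \emph{first} output leg, $\mu^{-1}(Z\diamond_{B}X)=X_{[+]}\ot_{B}X_{[-]}Z$ --- and then the second half of condition (3) finishes the argument in the same way; it would be worth writing this out since it is not literally symmetric. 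One small correction: identifying $\ker(\pi\diamond_{B}\pi)$ with the image of $\cI\diamond_{B}\cL+\cL\diamond_{B}\cI$ requires only right-exactness of the tensor product, not faithful flatness; the paper's blanket flatness assumption is rather what keeps the Takeuchi products and the comparison maps $\alpha,\alpha'$ under control, so that $\bar\Delta$ genuinely lands in $(\cL/\cI)\times_{B}(\cL/\cI)$.
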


 \subsection{Modules and comodules}\label{secmod}

 (1) Let $\CL$ be a left bialgebroid over $B$. A left module of $\CL$ is a left module $M$ of $\CL$ as an algebra, and becomes a $B$-bimodule by
\[ a. m.b=s(a)\, t(b)\la m,\quad\forall m\in M.\]
We make the category ${}_{\mathcal{L}}\mathcal{M} $ of left $\CL$-modules into a monoidal category by using
$\tens_B$ with respect to this $B$-bimodule structure, and the action of $\CL$ via by the coproduct, i.e.,
\[
X\la (m\tens n) = (X\o\la m)\tens (X\t\la n), \quad\forall m\in M\in {}_{\mathcal{L}}\mathcal{M},\  n\in N\in {}_{\mathcal{L}}\mathcal{M}.
\]

(2) A left $\cL$-comodule of a left $B$-bialgebroid $\CL$ is defined\cite{schau1} as a  $B$-bimodule $M$, together with a $B$-bimodule map $\delta: M\to \cL\times_{B} M\subseteq \cL\diamond_{B}M$ such that
    \[(\id\diamond_{B}\delta)\circ \delta=(\Delta\diamond_{B}\id)\circ \delta,\qquad (\varepsilon\diamond_{B}\id)\circ \delta=\id.\]
Using the notation $\delta(m)=m\mo \ot m\z$, that $\delta$ is a  $B$-bimodule map here amounts to $\delta(b\, m\, b')=b\, m\mo\, b'\ot m\z$.  The category ${}^{\mathcal{L}}\mathcal{M}$ of left $\CL$-comodules  is monoidal with tensor product of $M,N\in{}^{\mathcal{L}}\mathcal{M}$ given by $M\ot_{B}N$, with comodule structure 
     \[\delta(m\ot\,n)=m\mo\,n\mo\ot (m\z\ot\,n\z)\]
      for all $m\in M,n\in N$.

\section{Cotwist construction of Hopf algebroids}\label{seccotwist}

In this section, we provide a new cotwist construction that is more general than in \cite{Boehm,HM22,HM23} as it also modifies the base. We will eventually show that it is dual to a twist construction in \cite{Xu} when that applies, for a certain duality pairing.

\subsection{Invertible left 2-cocycles on bialgebroids}\label{secBcotwist}

\begin{definition} \label{Lcotwist}
Let $\cL$ be a left $B$-bialgebroid. A \textup{left 2-cocycle} on
$\cL$ is an element $\Gamma\in {}\Hom_{\overline{B}-}(\cL\otimes_{\overline{B}}\cL, B)$, such that
\begin{itemize}
    \item [(1)]$\Gamma(X, \Gamma(Y\o, Z\o)Y\t Z\t)=\Gamma(\Gamma(X\o, Y\o)X\t Y\t, Z);$
    \item[(2)] $\Gamma(1_{\cL}, X)=\varepsilon(X)=\Gamma(X, 1_{\cL}),$
\end{itemize}
for all $X, Y, Z\in \cL$. We denote by $Z^{2}(\cL, B)$ the collection of such 2-cocycles on $\cL$ over $B$. A right 2-cocycle is an element $\Sigma\in \Hom_{B-}(\cL\otimes_{B}\cL, B)$, such that
\begin{itemize}
    \item [(1)]$\Sigma(X, \overline{\Sigma(Y\t, Z\t)}Y\o Z\o)=\Sigma(\overline{\Sigma(X\t, Y\t)}X\o Y\o, Z);$
    \item[(2)] $\Sigma(1_{\cL}, X)=\varepsilon(X)=\Sigma(X, 1_{\cL}),$
\end{itemize}
for all $X, Y, Z\in \cL$. We call $\hat{\varepsilon}:=\varepsilon\circ m_{\cL}:\cL\ot_{\overline{B}}\cL\to B, X\ot Y\mapsto \varepsilon(XY)$ the trivial  left 2-cocycle on $\cL$.
\end{definition}

We will also need the pull-back along $s$ of a left 2-cocycle $\Gamma$ on $\CL$, which it will be convenient to  continue to denote as $\Gamma(a,b)$ for all $a,b\in B$ viewed via $s$ as elements of $\CL$. 

\begin{corollary} Given a  left 2-cocycle $\Gamma$ on a  left $B$-bialgebroid $\cL$, $a\cdot_{\Gamma}b=\Gamma(a,b)$ for all $a,b\in B$  defines an associative product on the same underlying vector space as $B$ and with the same unit. We denote $B$ with this product by $B^\Gamma$.   
\end{corollary}
\begin{proof}
    The cocycle axioms imply that this is associative, 
    \begin{align*}
        (a\cdot_{\Gamma} b)\cdot_{\Gamma} c=\Gamma(\Gamma(a,b),c)=\Gamma(a,\Gamma(b,c))=a\cdot_{\Gamma}(b\cdot_{\Gamma}c),
    \end{align*}
    for all $a,b,c\in B$, and that $1_B$ remains the unit. 
\end{proof}

\begin{proposition}\label{prop. twisted bimodule structures}
 If $M\in {}^{\cL}\mathcal{M}$ and $\Gamma$ is a left 2-cocycle on $\cL$ then $M$ is a $B^{\Gamma}$-bimodule with the bimodule structure given by
\begin{align}
a\cdot_{\Gamma}m=\Gamma(a,m\mo)m\z,\qquad m\cdot_{\Gamma}a=\Gamma(m\mo,a)m\z,
\end{align}
for all $a\in B$ and $m\in M$.
\end{proposition}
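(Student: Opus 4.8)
The plan is to verify the four bimodule axioms for the proposed $B^\Gamma$-actions, namely that $\cdot_\Gamma$ makes $M$ a left $B^\Gamma$-module, a right $B^\Gamma$-module, that the two actions commute, and that they are unital; I will also need to check that each formula descends to the balanced tensor product in which $\delta^\cL(m)=m\mo\ot m\z$ lives. Throughout I will use three structural inputs: first, that the source map is $\Delta$-grouplike in the sense $\Delta s(a)=s(a)\ot 1$ (so $s(a)\o=s(a)$, $s(a)\t=1$), with $B\hookrightarrow\cL$ via $s$ exactly as in the previous proposition; second, the comodule coassociativity $(m\mo)\o\ot(m\mo)\t\ot m\z=m\mo\ot(m\z)\mo\ot(m\z)\z$ together with the counit $\varepsilon(m\mo)m\z=m$; and third, the two cocycle conditions of Definition~\ref{Lcotwist}. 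The key observation is that each of the three associativity-type axioms is an instance of cocycle condition (1) in which $m\mo$ is placed in one of the three slots $X,Y,Z$, while the remaining two slots are the grouplike elements $s(a),s(b)$.

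Concretely, for the left module axiom I would compute $a\cdot_\Gamma(b\cdot_\Gamma m)$ by first noting $b\cdot_\Gamma m=\Gamma(b,m\mo)m\z$ and then applying $\delta^\cL$; since $\delta^\cL$ is a left $B$-module map, the scalar $\Gamma(b,m\mo)\in B$ re-enters the $\cL$-leg through $s$, and coassociativity rewrites the result so that
\[ a\cdot_\Gamma(b\cdot_\Gamma m)=\Gamma\big(s(a),\,\Gamma(s(b),(m\mo)\o)\,(m\mo)\t\big)\,m\z.\]
This is exactly the left-hand side of cocycle condition (1) evaluated at $X=s(a)$, $Y=s(b)$, $Z=m\mo$ (using $s(b)\t=1$), whose right-hand side collapses, again by $\Delta s(a)=s(a)\ot1$, to $\Gamma(\Gamma(a,b),m\mo)m\z=(a\cdot_\Gamma b)\cdot_\Gamma m$. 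Because both sides of (1) are linear in the slot occupied by $m\mo$, I am free to contract against $\delta^\cL(m)=m\mo\ot m\z$ at the very end, which is what legitimises appending the trailing $m\z$.

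The right module axiom and the bimodule compatibility follow by the identical mechanism, simply moving $m\mo$ into a different slot of (1): taking $(X,Y,Z)=(m\mo,s(a),s(b))$ yields $(m\cdot_\Gamma a)\cdot_\Gamma b=m\cdot_\Gamma(a\cdot_\Gamma b)$, while $(X,Y,Z)=(s(a),m\mo,s(b))$ yields $(a\cdot_\Gamma m)\cdot_\Gamma b=a\cdot_\Gamma(m\cdot_\Gamma b)$; in each case coassociativity converts the iterated coactions produced by the two successive actions into $(m\mo)\o\ot(m\mo)\t\ot m\z$, matching the two $\Delta$-ed arguments of (1). Unitality is separate and easier: the unit of $B^\Gamma$ is $1_B$ (a two-sided unit for $\cdot_\Gamma$ by cocycle condition (2) and $\varepsilon\circ s=\id$), and $1\cdot_\Gamma m=\Gamma(1_\cL,m\mo)m\z=\varepsilon(m\mo)m\z=m$ directly from (2) and the comodule counit, with $m\cdot_\Gamma 1=m$ symmetric.

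Finally I would check that the two action formulas are well defined on the balanced product where $\delta^\cL(m)$ lives, i.e. modulo $t(b)m\mo\ot m\z=m\mo\ot b\,m\z$. For the right action this is immediate from left $\overline B$-linearity of $\Gamma$, since $\Gamma(t(b)m\mo,a)=\Gamma(m\mo,a)b$; for the left action it additionally needs $\Gamma(s(a),t(b)m\mo)=\Gamma(s(a),m\mo)b$, which I would obtain by combining the $\ot_{\overline B}$-balancing with left $\overline B$-linearity and the commutativity of $s$ and $t$. I expect the main obstacle to be bookkeeping rather than conceptual: keeping the Sweedler indices of the double coaction aligned with the two $\Delta$-ed slots of (1) under coassociativity, and ensuring at each step that every scalar output of $\Gamma$ re-enters $\cL$ through $s$ and that the balanced-tensor relations are respected.
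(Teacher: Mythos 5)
Your proposal is correct and follows essentially the same route as the paper: both proofs reduce the three associativity identities to cocycle condition (1) with $s(a),s(b)$ (which are grouplike under $\Delta$) occupying two of the slots and $m\mo$ the third, using comodule coassociativity to align the iterated coaction with the $\Delta$-ed arguments. The paper's proof is just a terser version of yours, omitting the unitality and well-definedness checks that you spell out (correctly).
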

\begin{proof}
    For all $a,b\in B$ and $m\in M$,
    \begin{align*}
        (a\cdot_{\Gamma} b)\cdot_{\Gamma}m=\Gamma(\Gamma(a,b),m\mo)m\z=\Gamma(a, \Gamma(b, m\mt)m\mo)m\z=a\cdot_{\Gamma}(b\cdot_{\Gamma}m).
    \end{align*}
    As a result, $M$ is a left $B^{\Gamma}$-module. Similarly, $M$ is a right $B^{\Gamma}$-module. Moreover,
    \begin{align*}
        (a\cdot_{\Gamma} m)\cdot_{\Gamma}b=\Gamma(\Gamma(a,m\mt)m\mo,b)m\z=\Gamma(a, \Gamma(m\mt,b)m\mo)m\z= a\cdot_{\Gamma} (m\cdot_{\Gamma}b).
    \end{align*}
\end{proof}
\begin{remark}
     Given a left 2-cocycle $\Gamma$ on a left $B$-bialgebroid $\cL$, one can check that
     \[\Gamma(X\cdot_{\Gamma}a, Y)=\Gamma(X,a\cdot_{\Gamma}Y)\]
   for all $a\in B$ and $X,Y\in \CL$. Hence $\Gamma$ descends to a well-defined map $\CL\tens_{B^\Gamma}\CL\to B$.
\end{remark}
Recall that for any $N,M\in {}^{\cL}\CM$, $N\ot_{B} M$ is a left $\cL$-comodule with the codiagonal coaction
\[\delta(m\ot_{B}n)=m\mo\,n\mo\diamond_{B}m\z\ot_{B}n\z,\]
for all $m\in M$ and $n\in N$.

\begin{lemma}\label{lem. coherent map is BG-bilinear}
    Let $M,N\in {}^{\cL}\CM$. The map $\Gamma^{\#}:M\ot_{B^{\Gamma}}N\to M\ot_{B}N$ given by
    \begin{align*}
        \Gamma^{\#}(m\ot_{B^{\Gamma}}n)=\Gamma(m\mo, n\mo)m\z\ot_{B}n\z,
    \end{align*}
    is well defined. Moreover,
    \begin{align*}
        \Gamma^{\#}(b\cdot_{\Gamma}m\ot_{B^{\Gamma}}n)=b\cdot_{\Gamma}\Gamma^{\#}(m\ot_{B^{\Gamma}}n),\qquad \Gamma^{\#}(m\ot_{B^{\Gamma}}n\CG b)=\Gamma^{\#}(m\ot_{B^{\Gamma}}n)\CG b
    \end{align*}
    for all $b\in B, m\in M, n\in N$.
\end{lemma}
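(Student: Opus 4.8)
The plan is to prove all three assertions by direct computation, the single driving identity being the 2-cocycle condition (1) of Definition~\ref{Lcotwist}. Throughout I write $\delta(m)=m\mo\ot m\z$ for the left coaction. The content of ``well defined'' is that the formula, which manifestly makes sense on $M\ot N$ (the coactions land in the Takeuchi products $\cL\times_{B}M$, $\cL\times_{B}N$, so that $m\mo$ and $n\mo$ may be fed into the two slots of the $\BB$-balanced map $\Gamma$, the residual $B$-balancing being absorbed by the quotient defining $M\ot_{B}N$), descends to the quotient $M\ot_{B^{\Gamma}}N$, i.e. respects the relation $m\cdot_{\Gamma}b\ot n=m\ot b\cdot_{\Gamma}n$. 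The two bilinearity statements use the twisted bimodule structure of Proposition~\ref{prop. twisted bimodule structures}, applied both to the factors $M,N$ and to the codiagonal comodule $M\ot_{B}N$.

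The key preparatory step, and the place where care is needed, is to compute the coaction of a twisted product. Using that $\delta$ is a left $B$-module map, coassociativity of $\delta$, and the bialgebroid identity $\Delta s(b)=s(b)\ot_{B}1$, I would establish
\[\delta(b\cdot_{\Gamma}m)=s\big(\Gamma(s(b),m\mt)\big)\,m\mo\ot m\z,\qquad \delta(m\cdot_{\Gamma}b)=s\big(\Gamma(m\mt,s(b))\big)\,m\mo\ot m\z,\]
and, for the codiagonal coaction on $M\ot_{B}N$,
\[\delta\big(\Gamma^{\#}(m\ot n)\big)=s\big(\Gamma(m\mt,n\mt)\big)\,m\mo n\mo\ot_{B}m\z\ot_{B}n\z.\]
In each case the mechanism is the same: re-coacting the trailing comodule factor splits the first leg via $\Delta$ and throws up exactly one extra scalar factor of $\Gamma$, which then sits inside the source map. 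The main obstacle is precisely this bookkeeping of Sweedler legs together with the extra $\Gamma$-scalars; once it is carried out correctly the remainder is mechanical.

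With these formulas in hand each assertion collapses to a single application of (1). For well-definedness one computes $\Gamma^{\#}(m\cdot_{\Gamma}b\ot n)$ and $\Gamma^{\#}(m\ot b\cdot_{\Gamma}n)$ and applies (1) with $X=m\mo$, $Y=s(b)$, $Z=n\mo$, the identity $\Delta s(b)=s(b)\ot_{B}1$ collapsing one Sweedler leg of $s(b)$; this yields $\Gamma(\Gamma(m\mt,s(b))m\mo,n\mo)=\Gamma(m\mo,\Gamma(s(b),n\mt)n\mo)$, which is exactly the equality of the two images. Left $B^{\Gamma}$-linearity follows from (1) with $X=s(b)$, $Y=m\mo$, $Z=n\mo$, matching $\Gamma^{\#}(b\cdot_{\Gamma}m\ot n)$ against $b\cdot_{\Gamma}\Gamma^{\#}(m\ot n)$ computed via the codiagonal coaction; right $B^{\Gamma}$-linearity follows symmetrically from (1) with $X=m\mo$, $Y=n\mo$, $Z=s(b)$. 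I expect no further subtlety beyond checking that the extra scalar $\Gamma(m\mt,n\mt)$ produced by the codiagonal coaction on the right-hand sides is precisely the one produced by the cocycle identity on the left-hand sides, which the coaction formulas above confirm.
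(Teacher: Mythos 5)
Your proposal is correct and follows essentially the same route as the paper: compute $\Gamma^{\#}$ on each side using that the coaction is a $B$-bimodule map, then match the two resulting expressions via the 2-cocycle condition (1) with the appropriate choice of $X,Y,Z$ (the paper writes out only the well-definedness case and leaves the two bilinearity checks to "the same method", which your codiagonal-coaction bookkeeping supplies explicitly). No gaps.
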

\begin{proof}
    To see  that $\Gamma^{\#}$ is well defined, we check $\Gamma^{\#}$ factors through $\ot_{B^{\Gamma}}$. On the one hand,
    \begin{align*}
        \Gamma^{\#}(m\cdot_{\Gamma}b\ot n)=\Gamma^{\#}(\Gamma(m\mo,b)\ot n)=\Gamma(\Gamma(m\mt,b)m\mo,n\mo)m\z\ot n\z, 
    \end{align*}
  while on the other hand,
    \begin{align*}
        \Gamma^{\#}(m\ot b\cdot_{\Gamma} n)=\Gamma^{\#}(m\ot \Gamma(b,n\mo)n\z)=\Gamma(m\mo, \Gamma(b, n\mt)n\mo)m\z\ot n\z.
    \end{align*}
The two are the same by the 2-cocycle condition. By the same method, one can show  $\Gamma^{\#}(b\cdot_{\Gamma}m\ot_{B^{\Gamma}}n)=b\cdot_{\Gamma}\Gamma^{\#}(m\ot_{B^{\Gamma}}n)$ and $\Gamma^{\#}(m\ot_{B^{\Gamma}}n\CG b)=\Gamma^{\#}(m\ot_{B^{\Gamma}}n)\CG b$.
\end{proof}
Sometimes, we will denote the above map by $\GH_{M,N}$ in order to mention explicitly the corresponding modules $M$ and $N$. We say that $\Gamma$ is \textit{invertible}, if $\Gamma^{\#}$ is invertible for any left $\cL$-comodule $M,N$.
\begin{corollary}
    If $\Gamma$ is an invertible left 2-cocycle on a left $B$-bialgebroid then
    \begin{align*}
        \Gamma^{\#-1}(b\cdot_{\Gamma}(m\ot_{B}n ))=b\cdot_{\Gamma} \Gamma^{\#-1}(m\ot_{B}n),
    \end{align*}
    for all $b\in B$, $m\in M\in{}^{\cL}\CM$ and $n\in N\in{}^{\cL}\CM$.
\end{corollary}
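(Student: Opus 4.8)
The plan is to obtain the left $B^{\Gamma}$-linearity of $\Gamma^{\#-1}$ as a purely formal consequence of the left $B^{\Gamma}$-linearity of $\Gamma^{\#}$ already recorded in Lemma~\ref{lem. coherent map is BG-bilinear}, using nothing beyond the general principle that the inverse of a bijection intertwining two module structures again intertwines them. The two module structures in play are already fixed: on the domain $M\ot_{B^{\Gamma}}N$ the action is $b\cdot_{\Gamma}(m\ot_{B^{\Gamma}}n)=(b\cdot_{\Gamma}m)\ot_{B^{\Gamma}}n$, built from the left $B^{\Gamma}$-module structure of $M$ in Proposition~\ref{prop. twisted bimodule structures}; on the codomain $M\ot_{B}N$ the action is the one that Proposition~\ref{prop. twisted bimodule structures} attaches to the codiagonal $\cL$-comodule structure noted just before the lemma.

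First I would fix $y\in M\ot_{B}N$ and use the assumed invertibility of $\Gamma$ to write $y=\Gamma^{\#}(x)$ with $x:=\Gamma^{\#-1}(y)\in M\ot_{B^{\Gamma}}N$. Acting by $b$ on the codomain and transporting the action through $\Gamma^{\#}$ by the lemma gives
\[
b\cdot_{\Gamma}y=b\cdot_{\Gamma}\Gamma^{\#}(x)=\Gamma^{\#}(b\cdot_{\Gamma}x).
\]
Applying $\Gamma^{\#-1}$ to the two ends then yields $\Gamma^{\#-1}(b\cdot_{\Gamma}y)=b\cdot_{\Gamma}x=b\cdot_{\Gamma}\Gamma^{\#-1}(y)$, which is the claimed identity upon specialising $y=m\ot_{B}n$.

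I do not expect any genuine obstacle: the substantive work was the 2-cocycle computation carried out in the lemma, and here it is only being inverted. The single point needing care is the bookkeeping of the two distinct $B^{\Gamma}$-actions — the one on $M\ot_{B}N$ appearing inside $\Gamma^{\#-1}$ on the left-hand side, and the one on $M\ot_{B^{\Gamma}}N$ in which $\Gamma^{\#-1}(y)$ naturally lives on the right-hand side — together with the observation that $\Gamma^{\#-1}$ indeed lands in the latter. The same argument applied to the right $B^{\Gamma}$-linearity of $\Gamma^{\#}$ from the lemma gives, identically, the right-handed analogue $\Gamma^{\#-1}((m\ot_{B}n)\CG b)=\Gamma^{\#-1}(m\ot_{B}n)\CG b$.
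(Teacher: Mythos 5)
Your argument is correct and is exactly the intended one: the paper states this corollary without proof as an immediate formal consequence of Lemma~\ref{lem. coherent map is BG-bilinear}, namely that the inverse of a bijection intertwining the two $B^{\Gamma}$-actions again intertwines them. Your care in identifying the codomain action as the one induced by the codiagonal comodule structure is precisely the right bookkeeping.
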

Using the 2-cocycle condition for $\Gamma$, it is straightforward to show:
\begin{corollary}
    Let $N, M, P\in {}^{\cL}\CM$ and  let $\Gamma$ be a left 2-cocycle on $\cL$. Then
 \[
        \GH_{N\ot_{B}M,P}\circ(\GH_{N,M}\ot_{\BG}\id_{P})=\GH_{N, M\ot_{B}P}\circ(\id_{N}\ot_{\BG}\GH_{M,P})\]
       as maps $N\ot_{\BG}M\ot_{\BG}P\to N\ot_{B}M\ot_{B}P$.
\end{corollary}

\subsection{Cotwist of Hopf algebroids}\label{secHcotwist}

\begin{lemma}
      If $\Gamma$ is a left 2-cocycle on a left $B$-Hopf algebroid $\cL$ then the underlying vector space equipped with the product
\begin{align}\label{twistprod}
    X\cdot_{\Gamma} Y:=\Gamma(X\o, Y\o)X\t{}_{+} Y\t{}_{+}\overline{\Gamma(Y\t{}_{-}, X\t{}_{-})},
\end{align}
is a $(B^{\Gamma})^e$-ring.
\end{lemma}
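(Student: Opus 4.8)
The plan is to verify the three ingredients of a $(\BG)^e$-ring: that $\cdot_\Gamma$ is associative, that it is unital, and that the original structure maps furnish a $k$-algebra map $(\BG)^e\to(\cL,\cdot_\Gamma)$. Throughout I evaluate \eqref{twistprod} inside $\cL$ with its \emph{original} product, reading the scalar $\Gamma(X\o,Y\o)\in B$ through the left action (as $s(\Gamma(X\o,Y\o))$ on the left) and $\overline{\Gamma(Y\t{}_-,X\t{}_-)}$ through the right action (as $t(\Gamma(Y\t{}_-,X\t{}_-))$ on the right); this is well defined since $\Gamma$ is linear on $\cL\ot_{\overline{B}}\cL$ and all coproducts and translation maps live on $\cL$.

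For the unit I take $1_\cL$. Using $\Delta 1_\cL=1_\cL\ot 1_\cL$ and $1_{+}\ot_{\overline{B}}1_{-}=1_\cL\ot_{\overline{B}}1_\cL$ from \eqref{equ. inverse lamda 4}, the product $1_\cL\cdot_\Gamma X$ collapses to $\Gamma(1_\cL,X\o)\,X\t{}_{+}\,\overline{\Gamma(X\t{}_-,1_\cL)}$, and symmetrically $X\cdot_\Gamma 1_\cL$ to $\Gamma(X\o,1_\cL)\,X\t{}_{+}\,\overline{\Gamma(1_\cL,X\t{}_-)}$. The normalisation (2) of Definition~\ref{Lcotwist} turns each pair of $\Gamma$'s into $\varepsilon(X\o)$ and $\overline{\varepsilon(X\t{}_-)}$, so both reduce to $\varepsilon(X\o)\,X\t{}_{+}\,\overline{\varepsilon(X\t{}_-)}$; then \eqref{equ. inverse lamda 7}, in the form $X\t{}_{+}\,\overline{\varepsilon(X\t{}_-)}=X\t$, followed by the counit axiom $\varepsilon(X\o)X\t=X$ of the $B$-coring, gives $X$. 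This step uses only (2), \eqref{equ. inverse lamda 4}, \eqref{equ. inverse lamda 7} and the coring counit.

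Associativity is the crux and the step I expect to be the main obstacle. I would expand $(X\cdot_\Gamma Y)\cdot_\Gamma Z$ by setting $W:=X\cdot_\Gamma Y$ and computing both $\Delta W$ and the translation $W\t{}_{+}\ot_{\overline{B}}W\t{}_-$, and likewise for $X\cdot_\Gamma(Y\cdot_\Gamma Z)$. The tools are: coassociativity; $\Delta s(a)=s(a)\ot 1_\cL$ and $\Delta t(c)=1_\cL\ot t(c)$; the translation--coproduct compatibilities \eqref{equ. inverse lamda 5} and \eqref{equ. inverse lamda 6}, which are forced on us because in $\Delta W$ the plus-leg $X\t{}_{+}Y\t{}_{+}$ cannot be coproducted in isolation but only jointly with the minus-leg that sits inside the $\overline{\Gamma}$-scalar; the anti-multiplicativity \eqref{equ. inverse lamda 3}, $(UV)_{+}\ot_{\overline{B}}(UV)_-=U_{+}V_{+}\ot_{\overline{B}}V_-U_-$; and the module identities \eqref{equ. inverse lamda 9}--\eqref{equ. inverse lamda 10} to move the residual $s(\cdot)$ and $t(\cdot)$ past the translations. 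The decisive point is that the leading scalar produced on reassociating has exactly the shape appearing in cocycle condition (1): its right-hand side $\Gamma(\Gamma(X\o,Y\o)X\t Y\t,Z)$ already absorbs a scalar $s(\Gamma(X\o,Y\o))$ in its first slot, and equating it with the left-hand side $\Gamma(X,\Gamma(Y\o,Z\o)Y\t Z\t)$ coming from the other bracketing settles the ``plus/first-leg'' contributions, while the ``minus'' contributions --- whose arguments are already order-reversed in $\overline{\Gamma(Y_-,X_-)}$ --- are settled by the same condition read off through the order reversal in \eqref{equ. inverse lamda 3}. The genuine difficulty is purely organisational: one must realign all Sweedler indices via \eqref{equ. inverse lamda 5}--\eqref{equ. inverse lamda 6} so that the arguments of $\Gamma$ present themselves in the two forms of condition (1), make the reversal of \eqref{equ. inverse lamda 3} dovetail exactly with the reversal built into the $\overline{\Gamma}$-factor, and keep the $\overline{B}$-balancing correct so that every $\Gamma$ is applied to a legitimate element of $\cL\ot_{\overline{B}}\cL$.

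Finally, for the $(\BG)^e$-ring structure I would show that the \emph{same} maps $s,t:B\to\cL$, now regarded as maps $\BG\to(\cL,\cdot_\Gamma)$, serve as source and target. Their coproducts $\Delta s(a)=s(a)\ot 1_\cL$, $\Delta t(b)=1_\cL\ot t(b)$ and their translation maps are read off directly from \eqref{X+-}, so the same short calculation as for the unit gives $s(a)\cdot_\Gamma s(b)=s(a\cdot_\Gamma b)$ (hence $s$ is an algebra map $\BG\to(\cL,\cdot_\Gamma)$), $t(a)\cdot_\Gamma t(b)=t(b\cdot_\Gamma a)$ (hence $t$ is an anti-algebra map), and $s(a)\cdot_\Gamma t(b)=s(a)t(b)=t(b)\cdot_\Gamma s(a)$ (hence the two images $\cdot_\Gamma$-commute, and in fact reproduce the original unit morphism $\eta$). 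Combined with associativity, this shows that $\eta:(\BG)^e\to(\cL,\cdot_\Gamma)$, $a\ot\overline{b}\mapsto s(a)t(b)$, is a $k$-algebra map, which is exactly the assertion that $(\cL,\cdot_\Gamma)$ is a $(\BG)^e$-ring.
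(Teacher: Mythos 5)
Your proposal is correct and follows essentially the same route as the paper, which likewise reduces the bimodule/unit claims to the normalisation of $\Gamma$ together with \eqref{equ. inverse lamda 4} and \eqref{equ. inverse lamda 7}, and attributes associativity to the 2-cocycle condition combined with the translation-map identities (chiefly \eqref{equ. inverse lamda 6}). Your treatment of the unit and of $s,t$ as an algebra map and anti-algebra map with $\cdot_\Gamma$-commuting images is in fact slightly more explicit than the paper's own proof, while your associativity step is left at the same level of sketch as theirs.
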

\begin{proof}
    The $B^{\Gamma}$-bimodule structure is given by $b\cdot_\Gamma X \cdot_\Gamma b'$,
    for all $b.b'\in B^\Gamma$ and $X\in \cL$. The $\overline{B^{\Gamma}}$-bimodule structure is given by $\overline{b}\cdot_\Gamma X \cdot_\Gamma \overline{b'}$. The associativity follows straightforwardly from the 2-cocycle condition of $\Gamma$ and (\ref{equ. inverse lamda 6}). Moreover, $b\cdot_\Gamma \overline{b'}=\overline{b'}\cdot_\Gamma b$.\end{proof}
    
It is clear that $\cL$ is itself a left  $\cL$-comodule  by its coproduct. Moreover, there is another left $\cL$-comodule structure given by
 \begin{align}\label{regular left comodule structure}
     \delta:\cL\to \cL\di_{B} \cL,\quad X\mapsto X_{-}\ot X_{+}, \qquad\forall X\in \CL,
 \end{align}
where the $B$-bimodule structure is given by $b.X.b':=\overline{b'}X\overline{b}$. One can check that this defines a comodule structure by using (\ref{equ. inverse lamda 6}) and (\ref{equ. inverse lamda 7}). Thus, $\cL$ is equipped with two different left $\cL$-comodule structures. By applying Lemma~\ref{lem. coherent map is BG-bilinear} in the case that $M=\cL$ equipped with the left comodule structure given by (\ref{regular left comodule structure}), and $N=\cL$ equipped with the comodule structure given by its coproduct $\Delta$, we obtain a map $\Gamma^{\#}:\cL^{\Gamma}\di_{B^{\Gamma}} \cL^{\Gamma}\to \cL\di_{B} \cL$  given by \begin{align}\label{equ. special gamma}
    \Gamma^{\#}(X\di_{B^\Gamma} Y)=X_{+}\overline{\Gamma(X_{-}, Y\o)}\di_{B}Y\t,
\end{align}
which is invertible if the left 2-cocycle $\Gamma$ is invertible.
 \begin{lemma}
     If $\Gamma$ is an invertible left 2-cocycle on a left $B$-Hopf algebroid $\cL$ then $\cL^{\Gamma}$ is a $B^\Gamma$-coring, with $B^\Gamma$-bimodule structure
     \[b.X.b'=b\cdot_\Gamma \overline{b'}\cdot_\Gamma X,\qquad\forall X\in \cL,\quad \forall b,b'\in B^\Gamma\]
  and the  coproduct and counit
     \[\Delta^{\Gamma}(X)=\Gamma^{\#-1}(X\o\di_{B}X\t),\qquad \varepsilon^{\Gamma}(X)=\Gamma(X_{+}, X_{-}),\]
     where $\Gamma^{\#}:\cL^{\Gamma}\di_{B^{\Gamma}} \cL^{\Gamma}\to \cL\di_{B} \cL$ is given by (\ref{equ. special gamma}). We will use the Sweedler type notation $\Delta^\Gamma(X)=X\ro\di X\rt$.
 \end{lemma}
\begin{proof}
    We see that $\varepsilon^{\Gamma}$ is $B^{\Gamma}$-bilinear,
    \begin{align*}
        \varepsilon^{\Gamma}(b\cdot_{\Gamma}X)=&\Gamma(\Gamma(b, X\o)X\t{}_{+}, X\t{}_{-})=\Gamma(\Gamma(b, X{}_{+}\o)X{}_{+}\t, X_{-})\\
        =&\Gamma(b, \Gamma(X_{+}, X_{-}))=b\cdot_{\Gamma} \varepsilon^{\Gamma}(X).
    \end{align*}
    Similarly, $\varepsilon^{\Gamma}$ is right $B^{\Gamma}$-linear. It is not hard to see that the twisted coproduct is well defined. That  $\Delta^{\Gamma}$ is $B^{\Gamma}$-bilinear follows from
    \begin{align*}
       \Gamma^{\#}((b\cdot_{\Gamma} X)\di_{\BG} Y)= b\cdot_{\Gamma}X_{+}\overline{\Gamma(X_{-}, Y\o)}\di_{B}Y\t,
    \end{align*}
    and
    \begin{align*}
        \Gamma^{\#}(X\di_{\BG} (\overline{b}\cdot_{\Gamma} Y))=X_{+}\overline{\Gamma(X_{-}, Y\o)}\di_{B}\overline{b}\cdot_{\Gamma}Y\t.
    \end{align*}
We next check that the image of the coproduct belongs to Takeuchi product. Indeed, using our notation for $\Delta^\Gamma(X)$, we have on the one hand
 \begin{align*}
     \Gamma^{\#}(X\ro\CG \overline{b}\di_{\BG}X\rt)=&\Gamma^{\#}(X\ro{}_{+}\overline{\Gamma(b, X\ro{}_{-})}\di_{\BG}X\rt)\\
     =&X\ro{}_{++}\overline{\Gamma(\Gamma(b, X\ro{}_{-})X\ro{}_{+-}, X\rt\o)}\di_{B}X\rt\t\\
     =&X\ro{}_{+}\overline{\Gamma(\Gamma(b, X\ro{}_{-}\o)X\ro{}_{-}\t, X\rt\o)}\di_{B}X\rt\t\\
     =&X\ro{}_{+}\overline{\Gamma(b, \Gamma(X\ro{}_{-}\o, X\rt\o)X\ro{}_{-}\t X\rt\t)}\di_{B}X\rt\th\\
     =&X\ro{}_{++}\overline{\Gamma(b, \Gamma(X\ro{}_{-}, X\rt\o)X\ro{}_{+-} X\rt\t)}\di_{B}X\rt\th\\
     =&X\o{}_{+}\overline{\Gamma(b, X\o{}_{-} X\t)}\di_{B}X\th=X\o\overline{b}\di_{B}X\t,
 \end{align*}
 where the 6th step uses the fact that
 \begin{align}\label{equ. relation between old and new coproduct}
     X\o\di_{B}X\t=X\ro{}_{+}\overline{\Gamma(X\ro{}_{-}, X\rt\o)}\di_{\BG}X\rt\t.
 \end{align}
 On the other hand,
 \begin{align*}
      \Gamma^{\#}(X\ro\di_{\BG}X\rt\CG b)=&X\ro{}_{+}\overline{\Gamma(X\ro{}_{-}, \Gamma(X\rt\o,b)X\rt\t)}\di_{\BG}X\rt\th\\
      =&X\ro{}_{++}\overline{\Gamma(\Gamma(X\ro{}_{-}, X\rt\o) X\ro{}_{+-}X\rt\t,b)}\di_{\BG}X\rt\th\\
     =&X\o{}_{+}\overline{\Gamma(X\o{}_{-} X\t, b)}\di_{B}X\th 
 \end{align*}
which reduces to the same expression. Next, we  check the counity property,
 \begin{align*}
     (\id\di_{\BG}\varepsilon^{\Gamma})\circ \Delta^{\Gamma}(X)=&\overline{\Gamma(X\rt{}_{+}, X\rt{}_{-})}\CG X\ro\\
     =& X\ro{}_{+}\overline{\Gamma(X\ro{}_{-}, \Gamma(X\rt{}_{+}, X\rt{}_{-}))}\\
     =& X\ro{}_{+}\overline{\Gamma(X\ro{}_{-}, \Gamma(X\rt{}_{+}\o, X\rt{}_{-}\o)X\rt{}_{+}\t X\rt{}_{-}\t)}\\
     =& X\ro{}_{++}\overline{\Gamma(\Gamma(X\ro{}_{-}, X\rt{}_{+}\o)X\ro{}_{+-}X\rt{}_{+}\t, X\rt{}_{-})}\\
     =& X\ro{}_{++}\overline{\Gamma(\Gamma(X\ro{}_{-}, X\rt\o)X\ro{}_{+-}X\rt\t{}_{+}, X\rt\t{}_{-})}\\
     =& X\o{}_{+}\overline{\Gamma(X\o{}_{-}X\t{}_{+}, X\t{}_{-})}\\
     =& X_{+}\o{}_{+}\overline{\Gamma(X_{+}\o{}_{-}X_{+}\t{}, X{}_{-})}= X_{+}\overline{ \varepsilon(X{}_{-})}=X.
 \end{align*}
 By the similar method, one has that $(\id\di_{\BG}\varepsilon^{\Gamma})\circ \Delta^{\Gamma}=\id$. In order to show the coassociativity, we first show that
 \[(\Delta\di_{\BG} \id)\circ \GH^{-1}=(\id\di_{\BG}\GH^{-1})\circ (\Delta\di_{B} \id):\cL\di_{B}\cL\to\cL\di_{B}\cL\di_{\BG}\cL,\]
 which is well defined as $\Delta$ is $\BG$-bilinear. Moreover, this is equivalent to 
 \[(\Delta\di_{B} \id)\circ \GH=(\id\di_{B}\GH)\circ (\Delta\di_{\BG} \id):\cL\di_{\BG}\cL\to \cL\di_{B}\cL\di_{B}\cL, \]
which indeed holds as
 \begin{align*}
     (\Delta\di_{B} \id)\circ &\GH(X\ot Y)=X_{+}\o\ot X_{+}\t\overline{\Gamma(X_{-}, Y\o)}\ot Y\t\\
     =&X\o\ot X\t{}_{+}\overline{\Gamma(X\t{}_{-}, Y\o)}\ot Y\t=(\id\di_{B}\GH)\circ (\Delta\di_{\BG} \id)(X\ot Y).
 \end{align*}
 One can similarly show that
  \begin{equation}\label{Gammaid}(\id\di_{\BG} \Delta)\circ \GH^{-1}=(\GH^{-1}\di_{\BG}\id)\circ (\id\di_{B} \Delta):\cL\di_{B}\cL\to\cL\di_{\BG}\cL\di_{B}\cL.\end{equation}
  Moreover, we have that
 \[(\GH\di_{\BG}\id)\circ(\id\di_{\BG}\GH)=(\id\di_{\BG}\GH)\circ(\GH\di_{\BG}\id):\cL\di_{\BG}\cL\di_{\BG}\cL\to \cL\di_{B}\cL\di_{B}\cL\]
since
 \begin{align*}
     (\GH\di_{\BG}\id)\circ(\id\di_{\BG}\GH)(X\ot Y\ot Z)=&(\GH\di_{\BG}\id)(X\ot Y_{+}\overline{\Gamma(Y_{-}, Z\o)}\di_{B}Z\t)\\
     =&X_{+}\overline{\Gamma(X_{-}, Y_{+}\o)}\di_{B}Y_{+}\t\overline{\Gamma(Y_{-}, Z\o)}\di_{B}Z\t\\
     =&X_{+}\overline{\Gamma(X_{-}, Y\o)}\di_{B}Y\t{}_{+}\overline{\Gamma(Y\t{}_{-}, Z\o)}\di_{B}Z\t\\
     =&(\id\di_{\BG}\GH)(X_{+}\overline{\Gamma(X_{-}, Y\o)}\di_{B}Y\t\ot Z)\\
     =&(\id\di_{\BG}\GH)\circ(\GH\di_{\BG}\id)(X\ot Y\ot Z).
 \end{align*}
Using these results, it follows that  
 \begin{align*}
      (\DG\ot \id)\circ \DG=&(\GH^{-1}\ot \id)\circ(\Delta\ot\id )\circ \GH^{-1}\circ \Delta\\
      =&(\GH^{-1}\ot \id)\circ(\id\ot \GH^{-1})\circ(\Delta\ot\id )\circ \Delta\\
      =&(\id\ot \GH^{-1})\circ(\GH^{-1}\ot \id)\circ(\id\ot\Delta )\circ \Delta\\
      =&(\id\ot \GH^{-1})\circ(\id\ot\Delta )\circ \GH^{-1}\circ \Delta=(\id\ot \DG)\circ \DG.
 \end{align*}
\end{proof}

\begin{theorem}\label{thm. twisted Hopf algebroid}
    Let $\Gamma$ be an invertible left 2-cocycle on a left $B$-Hopf algebroid $\CL$. Then $\cL^\Gamma$ is a left Hopf algebroid over $\BG$ with the $\BG^e$-ring structure and $\BG$-coring structure given above. In addition, if $\CL$ is an anti-left Hopf algebroid then $\CL^{\Gamma}$ is also an anti-left Hopf algebroid.
\end{theorem}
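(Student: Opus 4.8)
The statement asserts that $\cL^\Gamma$ is a left Hopf algebroid over $B^\Gamma$. By the preceding two lemmas we already have the $(B^\Gamma)^e$-ring structure (the twisted product \eqref{twistprod}) and the $B^\Gamma$-coring structure (the twisted coproduct $\Delta^\Gamma$ and counit $\varepsilon^\Gamma$). The plan is therefore to verify the three remaining ingredients in order: first that $\Delta^\Gamma$ is an algebra map into the Takeuchi product $\cL^\Gamma\times_{B^\Gamma}\cL^\Gamma$ (so that the bialgebroid axioms hold), second that $\varepsilon^\Gamma$ is compatible with the product in the weak multiplicative sense required of a bialgebroid counit, and finally that the twisted analogue of the Hopf map $\lambda^\Gamma$ is invertible.

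First I would establish that $\Delta^\Gamma$ is multiplicative. Since $\Delta^\Gamma=\Gamma^{\#-1}\circ\Delta$ and $\Delta$ is already an algebra map (for the old product, landing in the Takeuchi product over $B$), the natural strategy is to transport this through $\Gamma^\#$. Concretely, I would show that $\Gamma^\#$ intertwines the twisted product $\cdot_\Gamma$ on $\cL^\Gamma\di_{B^\Gamma}\cL^\Gamma$ with the old product on $\cL\di_B\cL$, i.e. that $\Gamma^\#$ is an algebra map for suitably defined products on both sides; the monoidal-coherence identity in the Proposition before Section~\ref{secHcotwist} (the hexagon/associativity relation for $\Gamma^\#$ on triple products) is exactly the tool that makes the two bracketings of $\Gamma^\#$ agree, so that multiplicativity of $\Delta$ pushes forward to multiplicativity of $\Delta^\Gamma$. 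The key computational input here is the cocycle condition together with \eqref{equ. inverse lamda 3} and \eqref{equ. inverse lamda 6}, which control how $(\cdot)_+\ot(\cdot)_-$ interacts with products and iterated coproducts.

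For the counit, I would check $\varepsilon^\Gamma(X\cdot_\Gamma Y)=\varepsilon^\Gamma(X\cdot_\Gamma \overline{\varepsilon^\Gamma(Y)})$ directly from the formula $\varepsilon^\Gamma(X)=\Gamma(X_+,X_-)$ and the twisted product \eqref{twistprod}, reducing it via \eqref{equ. inverse lamda 7} and \eqref{equ. inverse lamda 8} to an identity among $\Gamma$-values that follows from the cocycle axiom; the normalisation $\varepsilon^\Gamma(1)=1$ comes from \eqref{equ. inverse lamda 4} and Definition~\ref{Lcotwist}(2). Finally, for the Hopf property I would write down the candidate translation map
\[
\lambda^\Gamma:\cL^\Gamma\ot_{\overline{B^\Gamma}}\cL^\Gamma\to \cL^\Gamma\di_{B^\Gamma}\cL^\Gamma,\qquad \lambda^\Gamma(X\ot Y)=X\ro\,\cdot_\Gamma\,(X\rt\,\cdot_\Gamma\,Y\text{-type expression}),
\]
and exhibit its inverse by conjugating the old $\lambda^{-1}$ through the isomorphisms $\Gamma^\#$ and the algebra twist; since $\Gamma$ is invertible precisely in the sense that every $\Gamma^\#_{M,N}$ is invertible, and since the coring and ring structures on $\cL^\Gamma$ are obtained by applying invertible maps to those of $\cL$, invertibility of $\lambda^\Gamma$ should follow formally from invertibility of $\lambda$. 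I expect the \emph{multiplicativity of $\Delta^\Gamma$} to be the main obstacle: it is the step where the twisted product, the twisted coproduct, and the non-coassociative-looking behaviour of $\Gamma^\#$ all interact at once, and making the two sides match requires the full strength of the coherence Proposition for $\Gamma^\#$ on triple tensor products together with careful bookkeeping of the $+/-$ indices under $\di_{B^\Gamma}$ versus $\di_B$.
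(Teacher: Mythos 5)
Your plan matches the paper's proof in both structure and substance: the paper verifies the counit character property by direct computation from $\varepsilon^\Gamma(X)=\Gamma(X_+,X_-)$, proves multiplicativity of $\Delta^\Gamma$ by showing $\Gamma^\#$ intertwines the twisted componentwise product with the old one (so that $\GH\circ\Delta^\Gamma(X\cdot_\Gamma Y)=\Delta(X\cdot_\Gamma Y)$ and invertibility of $\Gamma^\#$ finishes it), and obtains invertibility of $\lambda^\Gamma$ from the commutative square $\lambda\circ\GH=\GH\circ\lambda^\Gamma$, where the left-hand $\GH$ is built from the auxiliary comodule structure $X\mapsto X_-\ot X_+$ of \eqref{regular left comodule structure} so that it too is invertible. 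This is essentially identical to your proposal, including your correct identification of multiplicativity of $\Delta^\Gamma$ as the computational bottleneck.
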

\begin{proof}
    We first show that $\varepsilon^\Gamma$ is a left character,
   \begin{align*}
      \varepsilon^\Gamma(X\CG Y)=&\Gamma(\Gamma(X\o, Y\o)X\t{}_{++}Y\t{}_{++}, \Gamma(Y\t{}_{-}, X\t{}_{-})Y\t{}_{+-}X\t{}_{+-}) \\
      =&\Gamma(\Gamma(X\o, Y\o)X\t{}_{+}Y\t{}_{+}, \Gamma(Y\t{}_{-}\o, X\t{}_{-}\o)Y\t{}_{-}\t X\t{}_{-}\t) \\
      =&\Gamma(\Gamma(\Gamma(X\o, Y\o)X\t{}_{+}\o Y\t{}_{+}\o,Y\t{}_{-}\o) X\t{}_{+}\t Y\t{}_{+}\t Y\t{}_{-}\t, X\t{}_{-}) \\
      =&\Gamma(\Gamma(\Gamma(X\o, Y\o)X\t{}_{+}\o Y\t{}_{+},Y\t{}_{-}) X\t{}_{+}\t , X\t{}_{-}) \\
      =&\Gamma(\Gamma(\Gamma(X\o, Y\o)X\t Y\t{}_{+},Y\t{}_{-}) X\th{}_{+} , X\th{}_{-}) \\
      =&\Gamma(\Gamma(\Gamma(X\o, Y_{+}\o)X\t Y{}_{+}\t,Y{}_{-}) X\th{}_{+} , X\th{}_{-})\\
      =&\Gamma(\Gamma(X\o, \Gamma(Y_{+}\o, Y{}_{-}\o)Y_{+}\t Y{}_{-}\t) X\t{}_{+} , X\t{}_{-})\\
      =&\Gamma(\Gamma(X\o, \Gamma(Y_{+}, Y{}_{-})) X\t{}_{+} , X\t{}_{-})=\varepsilon^\Gamma(X\CG \varepsilon^\Gamma(Y)).
   \end{align*}
   We also have
   \begin{align*}
       \varepsilon^\Gamma(X\CG \overline{b})=&\Gamma(X_{++}, \Gamma(b, X_{-})X_{+-})=\Gamma(X_{+}, \Gamma(b, X_{-}\o)X_{+}\t)=\Gamma(\Gamma(X_{+}\o,b)X_{+}\t, X_{-})\\
       =&\Gamma(\Gamma(X\o,b)X\t{}_{+}, X\t{}_{-})= \varepsilon^\Gamma(X\CG b).
   \end{align*}
   We next check that the coproduct is an algebra map. Indeed,
   \begin{align*}
       \GH(&X\ro\CG Y\ro\di_{\BG} X\rt\CG Y\rt)\\
       =&\Gamma(X\ro\o, Y\ro\o)X\ro\t{}_{++}Y\ro\t{}_{++}\\
       &\overline{\Gamma(\Gamma(Y\ro\t{}_{-}, X\ro\t{}_{-})Y\ro\t{}_{+-}X\ro\t{}_{+-}, \Gamma(X\rt\o, Y\rt\o)X\rt\t Y\rt\t)}\\
       &\di_{B}X\rt\th{}_{+} Y\rt\th{}_{+}\overline{\Gamma(Y\rt\th{}_{-}, X\rt\th{}_{-})}\\
       =&\Gamma(X\ro\o, Y\ro\o)X\ro\t{}_{+}Y\ro\t{}_{+}\\
       &\overline{\Gamma(\Gamma(Y\ro\t{}_{-}\o, X\ro\t{}_{-}\o)Y\ro\t{}_{-}\t X\ro\t{}_{-}\t, \Gamma(X\rt\o, Y\rt\o)X\rt\t Y\rt\t)}\\
       &\di_{B}X\rt\th{}_{+} Y\rt\th{}_{+}\overline{\Gamma(Y\rt\th{}_{-}, X\rt\th{}_{-})}\\
       =&\Gamma(X\ro\o, Y\ro\o)X\ro\t{}_{+}Y\ro\t{}_{+}\\
       &\overline{\Gamma(\Gamma(\Gamma(Y\ro\t{}_{-}\o, X\ro\t{}_{-}\o)Y\ro\t{}_{-}\t X\ro\t{}_{-}\t, X\rt\o)Y\ro\t{}_{-}\th X\ro\t{}_{-}\th X\rt\t,  Y\rt\o)}\\
          &\di_{B}X\rt\th{}_{+} Y\rt\t{}_{+}\overline{\Gamma(Y\rt\t{}_{-}, X\rt\th{}_{-})}\\
          =&\Gamma(X\ro\o, Y\ro\o)X\ro\t{}_{+}Y\ro\t{}_{+}\\
       &\overline{\Gamma(\Gamma(Y\ro\t{}_{-}\o, \Gamma(X\ro\t{}_{-}\o, X\rt\o) X\ro\t{}_{-}\t X\rt\t)Y\ro\t{}_{-}\t X\ro\t{}_{-}\th X\rt\th,  Y\rt\o)}\\
          &\di_{B}X\rt\fo{}_{+} Y\rt\t{}_{+}\overline{\Gamma(Y\rt\t{}_{-}, X\rt\fo{}_{-})}\\
           =&\Gamma(X\ro\o, Y\ro\o)X\ro\t{}_{++}Y\ro\t{}_{+}\\
       &\overline{\Gamma(\Gamma(Y\ro\t{}_{-}\o, \Gamma(X\ro\t{}_{-}, X\rt\o) X\ro\t{}_{+-}\o X\rt\t)Y\ro\t{}_{-}\t X\ro\t{}_{+-}\t X\rt\th,  Y\rt\o)}\\
          &\di_{B}X\rt\fo{}_{+} Y\rt\t{}_{+}\overline{\Gamma(Y\rt\t{}_{-}, X\rt\fo{}_{-})}\\
          =&\Gamma(X\ro{}_{+}\o, Y\ro\o)X\ro{}_{+}\t{}_{+}Y\ro\t{}_{+}\\
       &\overline{\Gamma(\Gamma(Y\ro\t{}_{-}\o, \Gamma(X\ro{}_{-}, X\rt\o) X\ro{}_{+}\t{}_{-}\o X\rt\t)Y\ro\t{}_{-}\t X\ro{}_{+}\t{}_{-}\t X\rt\th,  Y\rt\o)}\\
          &\di_{B}X\rt\fo{}_{+} Y\rt\t{}_{+}\overline{\Gamma(Y\rt\t{}_{-}, X\rt\fo{}_{-})}\\
           =&\Gamma(X\o\o, Y\ro\o)X\o\t{}_{+}Y\ro\t{}_{+}\\
       &\overline{\Gamma(\Gamma(Y\ro\t{}_{-}\o,  X\o\t{}_{-}\o X\t)Y\ro\t{}_{-}\t X\o\t{}_{-}\t X\th,  Y\rt\o)}\\
          &\di_{B}X\fo{}_{+} Y\rt\t{}_{+}\overline{\Gamma(Y\rt\t{}_{-}, X\fo{}_{-})}\\
           =&\Gamma(X\o, Y\ro\o)X\t Y\ro\t{}_{+}\overline{\Gamma(Y\ro\t{}_{-},  Y\rt\o)}\di_{B}X\th{}_{+} Y\rt\t{}_{+}\overline{\Gamma(Y\rt\t{}_{-}, X\th{}_{-})}\\
           =&\Gamma(X\o, Y\ro{}_{+}\o)X\t Y\ro{}_{+}\t\overline{\Gamma(Y\ro{}_{-},  Y\rt\o)}\di_{B}X\th{}_{+} Y\rt\t{}_{+}\overline{\Gamma(Y\rt\t{}_{-}, X\th{}_{-})}\\
           =&\Gamma(X\o, Y\o)X\t Y\t\di_{B}X\th{}_{+} Y\th{}_{+}\overline{\Gamma(Y\th{}_{-}, X\th{}_{-})}\\
           =&\Delta(X\CG Y)=\GH\circ \Delta^{\Gamma}(X\CG Y).
   \end{align*}
 To show that $\cL^\Gamma$ is a left Hopf algebroid, it is sufficient to show that  
    \[
\begin{tikzcd}
  &\cL^{\Gamma}\ot_{\overline{\BG}}\cL^{\Gamma} \arrow[d, "\GH"] \arrow[r, "\lambda^{\Gamma}"] & \cL^{\Gamma}\di_{\BG}\cL^{\Gamma} \arrow[d, "\GH"] &\\
   & \cL\ot_{\overline{B}}\cL   \arrow[r, "\lambda"] & \cL\di_{B}\cL, &
\end{tikzcd}
\]
commutes, where the left $\GH$  given by
\[\GH(X\ot_{\overline{\BG}} Y)=X_{+}\ot_{\overline{B}}Y_{+}\overline{\Gamma(Y_{-}, X_{-})}\]
is invertible as we consider both $\cL$ have the left $\cL$-comodule structure given by (\ref{regular left comodule structure}). We see that on the one hand,
\begin{align*}
    \lambda\circ \GH(X\ot_{\overline{\BG}} Y)=X_{+}\o\di_{B}X_{+}\t Y_{+}\overline{\Gamma(Y_{-}, X_{-})}.
\end{align*}
On the other hand,
\begin{align*}
    \GH\circ& \lambda^{\Gamma}(X\ot_{\overline{\BG}} Y)\\
    =&\GH(X\ro\diamond_{\BG} \Gamma(X\rt\o, Y\o)X\rt\t{}_{+}Y\t{}_{+}\overline{\Gamma(Y\t{}_{-}, X\rt\t{}_{-})}\,)\\
    =&X\ro{}_{+}\overline{\Gamma(X\ro{}_{-}, \Gamma(X\rt\o, Y\o)X\rt\t Y\t)}\di_{B}X\rt\th{}_{+}Y\th{}_{+}\overline{\Gamma(Y\th{}_{-}, X\rt\th{}_{-})}\\
    =&X\ro{}_{+}\overline{\Gamma(\Gamma(X\ro{}_{-}\o, X\rt\o)X\ro{}_{-}\t X\rt\t,  Y\o)}\di_{B}X\rt\th{}_{+}Y\t{}_{+}\overline{\Gamma(Y\t{}_{-}, X\rt\th{}_{-})}\\
    =&X\o{}_{+}\overline{\Gamma(X\o{}_{-} X\t,  Y\o)}\di_{B}X\th{}_{+}Y\t{}_{+}\overline{\Gamma(Y\t{}_{-}, X\th{}_{-})}\\
    =&X\o\di_{B}X\t{}_{+}Y{}_{+}\overline{\Gamma(Y{}_{-}, X\t{}_{-})}.
\end{align*}
The last statement for $\cL^\Gamma$  an anti-left Hopf algebroid can be similarly shown via the commutative diagram
 \[
\begin{tikzcd}
  &\cL^{\Gamma}\ot_{\BG}\cL^{\Gamma} \arrow[d, "\GH"] \arrow[r, "\mu^{\Gamma}"] & \cL^{\Gamma}\di_{\BG}\cL^{\Gamma} \arrow[d, "\GH"] &\\
   & \cL\ot_{B}\cL   \arrow[r, "\mu"] & \cL\di_{B}\cL. &
\end{tikzcd}
\]
\end{proof}

\begin{lemma}\label{lem. comodule equ}
    Let $\cL$ be a left $B$-Hopf algebroid and $\Gamma$ be an invertible left 2-cocycle on $\cL$. Then $({}^{\cL}\CM,\ot_{B})\cong ({}^{\cL^{\Gamma}}\CM,\ot_{\BG})$ as monoidal categories.
\end{lemma}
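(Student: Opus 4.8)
The plan is to realise the equivalence through an explicit functor $\CF:{}^{\cL}\CM\to{}^{\cL^{\Gamma}}\CM$ that is the identity on underlying vector spaces and on $k$-linear maps, twisting only the module and comodule data. On objects, given $M\in{}^{\cL}\CM$ with coaction $\delta^{\cL}(m)=m\mo\di_B m\z$, I would equip the same space with the $\BG$-bimodule structure of Proposition~\ref{prop. twisted bimodule structures} and with the coaction
\[
\delta^{\cL^{\Gamma}}=(\GH_{\cL,M})^{-1}\circ\delta^{\cL},\qquad \GH_{\cL,M}(X\di_{\BG}m)=X_{+}\,\overline{\Gamma(X_{-},m\mo)}\di_B m\z,
\]
where the left tensor factor carries the regular comodule structure \eqref{regular left comodule structure} and the right factor the coaction of $M$; this $\GH_{\cL,M}$ is invertible because $\Gamma$ is. This mirrors exactly the definition $\DG=\GH^{-1}\circ\Delta$ of the twisted coproduct, with the second copy of $\cL$ replaced by the comodule $M$.

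First I would check that $\CF$ is well defined on objects. The verifications that $\delta^{\cL^{\Gamma}}$ is $\BG$-bilinear, that its image lies in $\cL^{\Gamma}\times_{\BG}M$, and that $(\varepsilon^{\Gamma}\di_{\BG}\id)\circ\delta^{\cL^{\Gamma}}=\id$ together with coassociativity hold, run in close parallel to the corresponding computations for $(\cL^{\Gamma},\DG,\varepsilon^{\Gamma})$ in the coring lemma preceding Theorem~\ref{thm. twisted Hopf algebroid}: every Sweedler manipulation there used only the $2$-cocycle identities of Definition~\ref{Lcotwist}, the relations \eqref{equ. inverse lamda 6}, \eqref{equ. inverse lamda 7} and \eqref{equ. relation between old and new coproduct}, none of which is specific to the right factor being $\cL$ rather than a general comodule. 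That $\CF$ preserves morphisms is then immediate: an $\cL$-colinear, $B$-bilinear $f$ stays $\BG$-bilinear because the $\BG$-action is built out of $\delta^{\cL}$, and it stays $\cL^{\Gamma}$-colinear by naturality of $\GH_{\cL,-}$ in the comodule argument combined with $\delta^{\cL^{\Gamma}}=\GH^{-1}\delta^{\cL}$.

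Next I would upgrade $\CF$ to a monoidal functor. The tensor constraint is the map $\GH_{M,N}:\CF(M)\ot_{\BG}\CF(N)\to\CF(M\ot_B N)$ of Lemma~\ref{lem. coherent map is BG-bilinear}, which is there shown to be invertible and $\BG$-bilinear, while the unit constraint identifies the monoidal units $B$ and $\BG$. Two checks remain. The associativity coherence is already in hand: the required identity $\GH_{N\ot_B M,P}\circ(\GH_{N,M}\ot_{\BG}\id_P)=\GH_{N,M\ot_B P}\circ(\id_N\ot_{\BG}\GH_{M,P})$ is precisely the Proposition proved just before Section~\ref{secHcotwist}. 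The genuinely new computation is that $\GH_{M,N}$ is a morphism of $\cL^{\Gamma}$-comodules, i.e. it intertwines the $\cL^{\Gamma}$-codiagonal coaction on $\CF(M)\ot_{\BG}\CF(N)$ with $\CF$ applied to the $\cL$-codiagonal coaction on $M\ot_B N$; I expect this to follow from the cocycle condition by the same pattern of rewriting used to prove $\GH\circ\DG=\Delta$, and this is where the ``extra care'' will be needed.

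Finally I would show $\CF$ is an equivalence. Since it is the identity on each $\Hom$-set, I would prove this by exhibiting a quasi-inverse $\mathcal{G}:{}^{\cL^{\Gamma}}\CM\to{}^{\cL}\CM$ obtained by cotwisting $\cL^{\Gamma}$ with the inverse $2$-cocycle, whose existence I expect to follow from the invertibility of $\Gamma$ (cf. the groupoid structure of Section~\ref{secgroupoid}). As $\mathcal{G}$ is again the identity on underlying spaces and on linear maps, the composites $\mathcal{G}\circ\CF$ and $\CF\circ\mathcal{G}$ are the identity on morphisms and yield natural monoidal isomorphisms to the identity functors, so $\CF$ is fully faithful and essentially surjective. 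I anticipate the main obstacle to be twofold: verifying the colinearity of the monoidal constraint $\GH_{M,N}$ above, and making precise in $\mathcal{G}$ the reconstruction of the original $B$-bimodule structure from the $\BG$-bimodule structure, since the $B$-action is extra data not determined by the coaction alone and must be recovered through the inverse cocycle.
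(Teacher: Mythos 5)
Your proposal is correct and follows essentially the same route as the paper: the identity-on-objects functor with twisted $\BG$-bimodule structure from Proposition~\ref{prop. twisted bimodule structures}, coaction $\GH^{-1}\circ\delta^{\cL}$, and monoidal constraint $\GH_{M,N}$ whose bilinearity, coherence, colinearity and invertibility are obtained exactly as you describe. The only point where you are more explicit than the paper is the quasi-inverse, which the paper leaves implicit here and only justifies later via the inverse cocycle $\Gamma^{-1}$ in Section~\ref{secgroupoid} (in particular Theorem~\ref{thm. invertible 2 cocycle} and Corollaries~\ref{cor. comodule twist back} and~\ref{cor. comodule twist forward}), so your flagged concerns do resolve as you anticipate.
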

\begin{proof}
    Given a left $\cL$-comodule $M$, we know it is a $\BG$-bimodule by Proposition~\ref{prop. twisted bimodule structures}. Moreover, $M$ is a left $\cL^{\Gamma}$-comodule with  coaction given by
    \[\delta^{\cL^{\Gamma}}:=\GH{}^{-1}\circ \delta^{\cL}:M\to \cL^{\Gamma}\diamond_{\BG}M.\]
    By a similar proof as for $\Delta^{\Gamma}$ and $\varepsilon^{\Gamma}$ above, one can check that this is a well defined coaction. Denote $\Gamma(M)$ be the $\cL^{\Gamma}$-comodule with the twisted coaction. We write the coaction as $\delta^{\cL^{\Gamma}}(m):=m\rmo\ot m\rz$ for all $m\in M$.
    If $N$  is another left $\cL$-comodule then $\Gamma(M)\ot_{\BG}\Gamma(N)$ is also a left $\cL^{\Gamma}$-comodule with codiaginal $\cL^{\Gamma}$-coaction. We set
    \[\GH_{M,N}:\Gamma(M)\ot_{\BG}\Gamma(N)\to \Gamma(M\ot_{B}N),\qquad m\ot n\mapsto \Gamma(m\mo, n\mo)m\z\ot n\z,\]
for all $M, N\in {}^{\cL}\CM$ as the natural transformation needed for an equivalence of monoidal categories. This is $\BG$-bilinear by Lemma \ref{lem. coherent map is BG-bilinear}. By a similar methods as our proof that $\Delta^{\Gamma}$ is an algebra map for the twisted product, one can  check that $\GH_{M,N}$ is left $\cL^{\Gamma}$-colinear. Moreover, $\GH$ satisfies the required coherence condition and is invertible since $\Gamma$ is an invertible 2-cocycle.
\end{proof}

\begin{remark}
 Similarly, for a right 2-cocycle $\Sigma$ on an anti-left Hopf algebroid $\cL$, one can cotwist $\cL$ to a new anti-left Hopf algebroid over a deformed base algebra, with twisted product given by
\[X\cdot_{\Sigma}Y=\overline{\Sigma(X\t, Y\t)}X\o{}_{[+]}Y\o{}_{[+]}\Sigma(Y\o{}_{[-]},X\o{}_{[-]}).\]
\end{remark}

\begin{proposition}\label{prop. cocomute coaction}
     Let $\cL$ be a left $B$-Hopf algebroid and $\Gamma$ be an invertible left 2-cocycle on $\cL$. Then
\begin{itemize}\item[(1)] $\Delta$ is $(\BG)^{e}$-bilinear and $\DG$ is  $B^e$-bilinear;
        \item[(2)] $(\id\ot\Delta^{\Gamma})\circ\Delta=(\Delta\ot\id)\circ\Delta^{{\Gamma}},\qquad(\id\ot\Delta)\circ\Delta^{\Gamma}=(\Delta^{\Gamma}\ot\id)\circ\Delta$; 
                \item[(3)]  
        \begin{align*}
            X_{+}\ro\ot X_{+}\rt\ot X_{-}=&X\ro\ot X\rt{}_{+}\ot X\rt{}_{-}\in \cL\di_{\BG}\cL\ot_{\overline{B}}\cL,\\
            X_{\p}\o\ot X_{\p}\t\ot X_{\m}=&X\o\ot X\t{}_{\p}\ot X\t{}_{\m}\in \cL\di_{B}\cL\ot_{\overline{\BG}}\cL;
        \end{align*}
        \item[(4)]  $ X_{\p +}\ot X_{\m} \ot X_{\p -}=X_{+}\ot X_{-}\ro\ot X_{-}\rt\in\cL\ot_{\overline{B}}(\cL\di_{\BG} \cL)$
       \end{itemize}
      for all $X\in\CL$, where we  write $X_{\p}\ot_{\BG} X_{\m}=(\lambda^{\Gamma}){}^{-1}(X\diamond_{\BG}1)$.\end{proposition}
\begin{proof}
(1) By direct computation, it is not hard to see that $\Delta$ is $(\BG)^{e}$-bilinear. That  $\DG$ is $B^e$-bilinear follows from the fact that $\Delta$ is $B^e$-bilinear and 
    \begin{align*}
        \GH(bX\di_{\BG} Y)=(b\di_{B} 1)\GH(X\di_{\BG}Y)\quad,\quad  \GH(X\di_{\BG} \overline{b}Y)=(1\di_{B} \overline{b})\GH(X\di_{\BG}Y).
    \end{align*}
By (1), it follows  that 
\begin{equation}
  (a\CG\Bar{a'}\CG X\CG b\CG\Bar{b'})_{+}\ot_{\BB}(a\CG\Bar{a'}\CG X\CG b\CG\Bar{b'})_{-}=a\CG X_{+}\CG b\ot_{\BB}b'\CG X_{-}\CG a'\end{equation}
  \begin{equation}
       (a\Bar{a'}Xb\Bar{b'})_{\p}\ot_{\BB}(a\Bar{a'}Xb\Bar{b'})_{\m}=aX_{\p}b\ot_{\BB}b'X_{\m}a'\end{equation}
       for all $a,a',b,b'\in B$. Therefore, (2), (3) and (4) are well defined.
              For (2),  we have
     \begin{align*}
      (\DG\ot \id)\circ \Delta=&(\GH^{-1}\ot \id)\circ(\Delta\ot\id )\circ  \Delta
      =(\GH^{-1}\ot \id)\circ(\id\ot\Delta )\circ \Delta\\
      =&(\id\ot\Delta )\circ \GH^{-1}\circ \Delta
      =(\id\ot \Delta)\circ \DG,
 \end{align*}
 where in the 3rd step we use (\ref{Gammaid}).
  Similarly for the 2nd equality. For (3), it is not hard to see that the formulae are well defined. By applying $\id\ot \lambda$ on the left hand side, we have
  \begin{align*}
  (\id\ot \lambda)( X_{+}\ro\ot X_{+}\rt\ot X_{-})=&X_{+}\ro\ot X_{+}\rt\o\ot X_{+}\rt\t X_{-}
  =X_{+}\o\ro\ot X_{+}\o\rt\ot X_{+}\t X_{-}\\
  =&X\ro\ot X\rt\ot 1=(\id\ot \lambda)(X\ro\ot X\rt{}_{+}\ot X\rt{}_{-}).
  \end{align*}
  The second equality can be shown similarly by applying $\id\ot \lambda^{\Gamma}$ on both sides.
  For (4), we have
  \begin{align*}
      X_{\p +}\ot X_{\m}\ot X_{\p -}=&X_{\p \p +}\ot X_{\m}\ot X_{\p\m +}\overline{\Gamma(X_{\p\m -}, X_{\p\p -})}\\
      =&X_{\p +}\ot X_{\m}\ro\ot X_{\m}\rt{}_{ +}\overline{\Gamma(X_{\m}\rt{}_{-}, X_{\p -})}\\
       =&X_{\p +}\ot X_{\m +}\ro\ot X_{\m +}\rt\overline{\Gamma(X_{\m-}, X_{\p -})}=X_{+}\ot X_{-}\ro\ot X_{-}\rt.
  \end{align*}
\end{proof}
By a similar proof, we have
\begin{corollary}\label{cor. cocommute coaction}
    Let $\cL$ be a left $B$-Hopf algebroid and $\Gamma$ be an invertible left 2-cocycle on $\cL$. If $M$ is a left $\cL$-comodule with coaction $\delta^{\cL}$ (hence a left $\cL^{\Gamma}$-comodule with coaction $\delta^{\cL^{\Gamma}}$ by Lemma \ref{lem. comodule equ}), then
    \[(\Delta\ot \id)\circ \delta^{\cL^{\Gamma}}=(\id\ot \delta^{\cL^{\Gamma}})\circ \delta^{\cL},\quad (\Delta^{\Gamma}\ot \id)\circ \delta^{\cL}=(\id\ot \delta^{\cL})\circ \delta^{\cL^{\Gamma}}.\]
\end{corollary}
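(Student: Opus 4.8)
The plan is to transcribe the proof of Proposition~\ref{prop. cocomute coaction}(1) almost verbatim, replacing the last tensorand $\cL$ by the comodule $M$ and the coproduct on that slot by the coaction $\delta^{\cL}$. The three ingredients are the factorisations $\Delta^{\Gamma}=\GH^{-1}\circ\Delta$ and $\delta^{\cL^{\Gamma}}=\GH^{-1}\circ\delta^{\cL}$ (the latter from Lemma~\ref{lem. comodule equ}), the comodule coassociativity axiom $(\Delta\di_{B}\id)\circ\delta^{\cL}=(\id\di_{B}\delta^{\cL})\circ\delta^{\cL}$ of Section~\ref{secmod}(2), and the two ``mixed'' hexagon identities for $\GH^{-1}$ in which the rightmost slot is the general comodule $M$,
\begin{align}
(\id\di_{\BG}\delta^{\cL})\circ\GH^{-1}&=(\GH^{-1}\di_{\BG}\id)\circ(\id\di_{B}\delta^{\cL}),\label{mixedhexa}\\
(\Delta\di_{\BG}\id)\circ\GH^{-1}&=(\id\di_{\BG}\GH^{-1})\circ(\Delta\di_{B}\id),\label{mixedhexb}
\end{align}
which are the comodule analogues of the two identities established inside the coring lemma preceding Theorem~\ref{thm. twisted Hopf algebroid}.

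Granting \eqref{mixedhexa}--\eqref{mixedhexb}, the two claimed relations follow by short diagram chases. For the first identity I would compute
\begin{align*}
(\Delta\ot\id)\circ\delta^{\cL^{\Gamma}}
&=(\Delta\ot\id)\circ\GH^{-1}\circ\delta^{\cL}
=(\id\ot\GH^{-1})\circ(\Delta\ot\id)\circ\delta^{\cL}\\
&=(\id\ot\GH^{-1})\circ(\id\ot\delta^{\cL})\circ\delta^{\cL}
=(\id\ot\delta^{\cL^{\Gamma}})\circ\delta^{\cL},
\end{align*}
using $\delta^{\cL^{\Gamma}}=\GH^{-1}\circ\delta^{\cL}$, then \eqref{mixedhexb}, then comodule coassociativity, and finally $\GH^{-1}\circ\delta^{\cL}=\delta^{\cL^{\Gamma}}$. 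Symmetrically, for the second identity,
\begin{align*}
(\Delta^{\Gamma}\ot\id)\circ\delta^{\cL}
&=(\GH^{-1}\ot\id)\circ(\Delta\ot\id)\circ\delta^{\cL}
=(\GH^{-1}\ot\id)\circ(\id\ot\delta^{\cL})\circ\delta^{\cL}\\
&=(\id\ot\delta^{\cL})\circ\GH^{-1}\circ\delta^{\cL}
=(\id\ot\delta^{\cL})\circ\delta^{\cL^{\Gamma}},
\end{align*}
now invoking $\Delta^{\Gamma}=\GH^{-1}\circ\Delta$, comodule coassociativity, and \eqref{mixedhexa}.

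The only genuine work lies in justifying \eqref{mixedhexa}--\eqref{mixedhexb}, and I expect this to be the main obstacle, since it is where the $2$-cocycle condition on $\Gamma$ enters and where one must track the two balanced tensor products $\di_{B}$ and $\di_{\BG}$ and check that every composite is defined on the appropriate Takeuchi subspace; the latter is guaranteed by $\delta^{\cL}$ having image in $\cL\times_{B}M$ and by $\GH^{-1}$ preserving these subspaces. I would verify \eqref{mixedhexa}--\eqref{mixedhexb} by repeating, for $M$ in place of the rightmost copy of $\cL$, the explicit calculation used for the analogous coring identities, substituting the comodule coassociativity axiom for the coassociativity of $\Delta$. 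Equivalently, \eqref{mixedhexa}--\eqref{mixedhexb} are precisely the coherence of $\GH$ together with the $\cL^{\Gamma}$-colinearity of $\GH_{\cL,M}$ that are recorded in the proof of Lemma~\ref{lem. comodule equ}, so one may simply cite that lemma. Once \eqref{mixedhexa}--\eqref{mixedhexb} are in hand, the two displays above close the argument.
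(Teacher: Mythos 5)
Your proposal is correct and matches the paper's intended argument: the paper disposes of this corollary with the single phrase ``By a similar proof'' referring to Proposition~\ref{prop. cocomute coaction}, and your two diagram chases, together with the comodule versions of the two intertwining identities for $\GH^{-1}$ (which follow exactly as in the coring lemma, using \eqref{equ. inverse lamda 5} and comodule coassociativity in place of coassociativity of $\Delta$), are precisely that similar proof written out. No gaps.
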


\subsection{Comparing with Drinfeld cotwist in \cite{Boehm,HM22,HM23}}\label{sec:Dri}
Recall that a convolution-invertible $B$-bilinear left 2-cocycle $\Gamma$ on a left Hopf algebroid $\cL$ 
is a convolution invertible element $\Gamma\in {}_{B^{e}}\Hom(\cL\otimes_{B^e}\cL, B)$ such that
\begin{align*}(i)\quad &\Gamma(X, \Gamma(\one{Y}, \one{Z})\two{Y}\two{Z})=\Gamma(\Gamma(\one{X}, \one{Y})\two{X}\two{Y}, Z);\\
(ii)\quad & \Gamma(1_{\cL}, X)=\varepsilon(X)=\Gamma(X, 1_{\cL});\\
(iii)\quad &\Gamma(X, Y\,b)=\Gamma(X, Y\,\overline{b})\end{align*}
for all $X, Y, Z\in \cL$. The cotwisted left Hopf algebroid $\CL_D^\Gamma$ has the original coring structure and a twisted product given by
\[X\ast_{\Gamma}Y=\Gamma(X\o, Y\o)\,\overline{\Gamma^{-1}(X\th, Y\th)}\, X\t\,Y\t,\]
for all $X,Y\in \cL$. It is not hard to see that  a convolution-invertible $B$-bilinear left 2-cocycle $\Gamma$ in the above sense is a special case of an invertible left 2-cocycle on $\cL$ in our more general sense in Definition~\ref{Lcotwist}, where $B^\Gamma=B$ (due to the $B^e$-linearity). It factors through $\cL\ot_{\BB}\cL$ by the composition with the projection $\pi:\cL\ot_{\BB}\cL\to \cL\ot_{B^{e}}\cL$ and this composition, which we still denote $\Gamma$ for convenience, provides the cocycle in our more general sense. Moreover, we see that $\GH$ in Lemma~\ref{lem. coherent map is BG-bilinear} is invertible with
    \begin{align*}
        \GH^{-1}(m\ot_{B} n)=\Gamma^{-1}(m\mo, n\mo)\,m\z\ot n\z,
    \end{align*}
    for all $m\in M\in {}^{\cL}\CM$ and  $n\in N\in {}^{\cL}\CM$. Indeed,
    \begin{align*}
        \GH\circ \GH^{-1}(m\ot_{B} n)=&\Gamma(m\mt, n\mt)\Gamma^{-1}(m\mo, n\mo)m\z\ot_{B}n\z=m\ot_{B} n,
    \end{align*}
   and similarly for $ \GH^{-1}\circ \GH=\id_{M\ot_{B}N}$.  
    
    \begin{theorem}\label{thm. comparison}
      Let $\cL$ be a left Hopf algebroid over $B$, and $\Gamma$  a convolution-invertible $B$-bilinear left 2-cocycle on $\cL$.  Then $\cL^{\Gamma}\cong \cL_{D}^{\Gamma}$ as left Hopf algebroids. The isomorphism is given by
      \[\Psi:\cL^{\Gamma}\to \cL_{D}^{\Gamma},\qquad X\mapsto \overline{\Gamma(X\t{}_{+}, X\t{}_{-})}\, X\o,\]
      with inverse
      \[\Psi^{-1}(X)=X\o{}_{+}\overline{\Gamma^{-1}(X\o{}_{-},X\t)},\]
      for all $X\in \cL$.
\end{theorem}
\begin{proof} We first observe that $\Psi$ and $\Psi^{-1}$ are well defined. As $\Gamma$ is left $B$-linear and factors through the balanced tensor product $\ot_{B^e}$, it is clear that the base algebra is unchanged.  Now, we check that $\Psi$ is a coring map. It is not hard to see that $\Psi$ is left $B^{e}$-linear and $\varepsilon_{D}^{\Gamma}\circ \Psi=\varepsilon\circ \Psi=\varepsilon^{\Gamma}$. Moreover, $\Delta^{\Gamma}(X)=X\o{}_{+}\overline{\Gamma^{-1}(X\o{}_{-}, X\t)}\di X\th$ since it is not hard to check that $\GH\circ \Delta^{\Gamma}(X)=\Delta(X)$. Therefore, we have
    \begin{align*}
       (\Psi\di_{B}& \Psi)\circ \Delta^{\Gamma}(X)\\
       =& (\Psi\di_{B} \Psi)(X\o{}_{+}\overline{\Gamma^{-1}(X\o{}_{-}, X\t)}\di X\th)\\
       =&\overline{\Gamma(X\o{}_{+}\t{}_{+},\Gamma^{-1}(X\o{}_{-}, X\t)X\o{}_{+}\t{}_{-})}X\o{}_{+}\o\di \overline{\Gamma(X\fo{}_{+},X\fo{}_{-})}X\th\\
       =&\overline{\Gamma(X\t{}_{++},\Gamma^{-1}(X\t{}_{-}, X\th)X\t{}_{+-})}X\o\di \overline{\Gamma(X\fiv{}_{+},X\fiv{}_{-})}X\fo\\
       =&\overline{\Gamma(X\t{}_{+},\Gamma^{-1}(X\t{}_{-}\o, X\th)X\t{}_{-}\t)}X\o\di \overline{\Gamma(X\fiv{}_{+},X\fiv{}_{-})}X\fo\\
       =&\overline{\Gamma(X\t{}_{+}\o,X\t{}_{-}\o\, X\th)\Gamma^{-1}(X\t{}_{+}\t\,X\t{}_{-}\t,X\fo)}X\o\di \overline{\Gamma(X\si{}_{+},X\si{}_{-})}X\fiv\\
       =&\overline{\Gamma(X\t{}_{+},X\t{}_{-}\, X\th)}X\o\di \overline{\Gamma(X\fiv{}_{+},X\fiv{}_{-})}X\fo\\
       =&X\o\di \overline{\Gamma(X\th{}_{+},X\th{}_{-})}X\t =\Delta\circ \Psi(X)=\Delta_{D}^{\Gamma}\circ \Psi(X),
    \end{align*}
where the 5th step uses $\Gamma(\one{X}, \one{Y}\one{Z})\Gamma^{-1}(\two{X}\two{Y}, \two{Z})=\Gamma(X\,\Gamma^{-1}(\one{Y}, Z), \two{Y})$ in \cite{HM22}. As a result, $\Psi$ is a coring map. Next, we check that $\Psi$ is also an algebra map,
    \begin{align*}
        \Psi(&X\cdot_{\Gamma}Y)\\
        =&\Psi(\Gamma(X\o, Y\o)\,X\t{}_{+}\,Y\t{}_{+}\overline{\Gamma(Y\t{}_{-}, X\t{}_{-})}\, )\\
        =&\Gamma(X\o, Y\o)\overline{\Gamma(X\th{}_{++}\,Y\th{}_{++},\Gamma(Y\th{}_{-},X\th{}_{-})Y\th{}_{+-}X\th{}_{+-})}X\t Y\t\\
         =&\Gamma(X\o, Y\o)\overline{\Gamma(X\th{}_{+}\,Y\th{}_{+},\Gamma(Y\th{}_{-}\o,X\th{}_{-}\o)Y\th{}_{-}\t\,X\th{}_{-}\t)}X\t Y\t\\
         =&\Gamma(X\o, Y\o)\overline{\Gamma^{-1}(X\th{}_{+}\o, Y\th{}_{+}\o)}\\
         &\overline{\Gamma(X\th{}_{+}\t,\Gamma(Y\th{}_{+}\t,\Gamma(Y\th{}_{-}\o,X\th{}_{-}\o)Y\th{}_{-}\t\,X\th{}_{-}\t)Y\th{}_{+}\th\,Y\th{}_{-}\th\,X\th{}_{-}\th)}X\t Y\t\\
         =&\Gamma(X\o, Y\o)\overline{\Gamma^{-1}(X\th{}_{+}\o, Y\th{}_{+}\o)}\\
         &\overline{\Gamma(X\th{}_{+}\t,\Gamma(Y\th{}_{+}\t,\Gamma(Y\th{}_{-}\o,X\th{}_{-}\o)Y\th{}_{-}\t\,X\th{}_{-}\t)X\th{}_{-}\th)}X\t Y\t\\
          =&\Gamma(X\o, Y\o)\overline{\Gamma^{-1}(X\th{}_{+}\o, Y\th{}_{+}\o)}\\
         &\overline{\Gamma(X\th{}_{+}\t,\Gamma(\Gamma(Y\th{}_{+}\t, Y\th{}_{-}\o)Y\th{}_{+}\th\,Y\th{}_{-}\t,X\th{}_{-}\o)X\th{}_{-}\t)}X\t Y\t\\
         =&\Gamma(X\o, Y\o)\overline{\Gamma^{-1}(X\th{}_{+}\o, Y\th{}_{+}\o)\Gamma(X\th{}_{+}\t,\Gamma(Y\th{}_{+}\t, Y\th{}_{-})X\th{}_{-})}X\t Y\t\\
         =&\Gamma(X\o, Y\o)\overline{\Gamma^{-1}(X\th, Y\th)\Gamma(X\fo{}_{+},\Gamma(Y\fo{}_{+}, Y\fo{}_{-})X\fo{}_{-})}X\t Y\t\\
         =&\Gamma(X\o, Y\o)\overline{\Gamma^{-1}(\overline{\Gamma(X\fo{}_{+}\,\Gamma(Y\fo{}_{+},Y\fo{}_{-}),X\fo{}_{-})}X\th, Y\th)}X\t\,Y\t\\
          =&\Gamma(X\o, Y\o)\overline{\Gamma^{-1}(\overline{\Gamma(X\fo{}_{+},X\fo{}_{-})}X\th, \overline{\Gamma(Y\fo{}_{+},Y\fo{}_{-})}Y\th)}X\t\,Y\t\\
          =&\Psi(X)\ast_{\Gamma}\Psi(Y),
    \end{align*}
    where the 4th step uses $\Gamma(XY,Z)=\Gamma^{-1}(X\o, Y\o)\Gamma(X\t,\Gamma(Y\t, Z\o)Y\th\,Z\t)$. To see that $\Psi$ is invertible, we have
    \begin{align*}
        \Psi^{-1}\,(\Psi(X))=&X\o{}_{+}\overline{\Gamma^{-1}(X\o{}_{-},\overline{\Gamma(X\th{}_{+},X\th{}_{-})}X\t)}\\
        =&X\o{}_{+}\overline{\Gamma^{-1}(X\o{}_{-},\overline{\Gamma(X\t{}_{+}\t,X\t{}_{-})}X\t{}_{+}\o)}\\
        =&X\o{}_{+}\overline{\Gamma^{-1}(X\o{}_{-}\o\,X\t{}_{+}\o,X\t{}_{-}\o)\Gamma(X\o{}_{-}\t, X\t{}_{+}\t\,X\t{}_{-}\t)}\\
        =&X\o{}_{+}\overline{\Gamma^{-1}(X\o{}_{-}\,X\t{}_{+},X\t{}_{-})}=X_{+}\overline{\varepsilon(X_{-})}=X,
    \end{align*}
    where the 3rd step uses $ \Gamma(\one{X}\one{Y}, \one{Z})\Gamma^{-1}(\two{X}, \two{Y} \two{Z})=\Gamma^{-1}(X, \overline{\Gamma(\two{Y}, Z)}\one{Y})$ in \cite{HM22}. The proof that $\Psi\circ \Psi^{-1}=\id$ is similar.
\end{proof}
\begin{remark}\label{remHopf}
When the base is trivial and we have a Hopf algebra $H$ over $k$, our general notion reduces to a convolution invertible left 2-cocycle $\Gamma:H\ot H\to k$ in the usual sense\cite{Ma:book}, and our general construction gives a Hopf algebra $H^\Gamma$. For all $h\in H$, $h_{+}\ot h_{-}=h\o\ot S(h\t)$, the product is
\[h\CG g=\Gamma(h\o, g\o)h\t g\t \Gamma(S(g\th), S(h\th)),\]
the coproduct and counit are
\[\Delta^{\Gamma}(h)=h\o\Gamma^{-1}(S(h\t), h\th)\ot h\fo,\qquad \varepsilon^{\Gamma}(h)=\Gamma(h\o, S(h\t)).\]
Indeed, we can check that
\begin{align*}
    \GH(h\o\Gamma^{-1}(S(h\t), h\th)\ot h\fo)=&h\o \Gamma^{-1}(S(h\th), h\fo)\Gamma(S(h\t), h\fiv)\ot h\si\\
    =&h\o\ot h\t.
\end{align*}
Moreover,   $H^{\Gamma}$ is isomorphic to the usual Drinfeld cotwist of $H$  (denoted here by $H^{\Gamma}_{D}$) by
\[ \Psi:H^{\Gamma}\to H^{\Gamma}_{D},\quad \Psi(h)=h\o\Gamma(h\t, S(h\th)),\]
    for all $h\in H$. 

    \end{remark}

\subsection{Groupoid structure on 2-cocycles of Hopf algebroids}\label{secgroupoid}

Given a Hopf algebra $H$ and an invertible left 2-cocycle $\gamma$ on $H$, by applying the Drinfeld cotwist on $H$, we get another Hopf algebra $H^{\gamma}$ with cotwisted product, see\cite{Ma:book}. Moreover, $\gamma^{-1}$  is a left 2-cocycle on $H^{\gamma}$ and by applying the Drinfeld cotwist by $\gamma^{-1}$ on $H^{\gamma}$,  we can cotwist $H^{\gamma}$ back to the original $H$. Moreover, if $\sigma$ is an invertible left 2-cocycle on $H^{\gamma}$, we can cotwist $H^{\gamma}$ to a new Hopf algebra $(H^{\gamma})^{\sigma}$ which is equal to $H^{\sigma\ast \gamma}$, where $\sigma\ast \gamma$ is an invertible left 2-cocycle on $H$ given by
\[(\sigma\ast \gamma)(h,g)=\sigma(h\o, g\o)\gamma(h\t,g\t),\]
for all $h,g\in H$. Hence,  the collection of 2-cocycles and the cotwisted Hopf algebras can be viewed as a groupoid. The source of $\gamma$ is $H$ and the target of $\gamma$ is $H^{\gamma}$. The product of the 2-cocycles is given by convolution product as above.

Motivated by this observation for Hopf algebras, is there a groupoid structure on the collections of Hopf algebroids? This is clear if the 2-cocycles are both left $B$-linear and left $\overline{B}$-linear as in Section~\ref{sec:Dri} and \cite{HM23}, as we can define the convolution product. However, we cannot  define the convolution product of two 2-cocycles  as in Definition \ref{Lcotwist} because these are only left $\overline{B}$-linear. In this section, we show that there is still an analogous groupoid structure on Hopf algebroids and their 2-cocycles, however, the product is no longer the convolution product. We will also see that if $\Gamma$ is an invertible left 2-cocycle on a left Hopf algebroid $\cL$ then we can twist $\cL^{\Gamma}$ back to $\cL$.

\begin{lemma}\label{prop. left inverse}
      Let $\cL$ be a left Hopf algebroid over $B$ and $\Gamma$ an invertible left 2-cocycle on $\cL$. Then $\Sigma:\cL^{\Gamma}\ot_{\BG}\cL^{\Gamma}\to \BG$ given by
    \[\Sigma(X, Y)=\Gamma(X_{+}Y_{+}, \Gamma(Y_{-}\o, X_{-}\o)Y_{-}\t\, X_{-}\t)\]
    is a left $\overline{\BG}$-linear map, such that
    \[\Gamma(\Sigma(X\ro, Y\ro), \Gamma(X\rt\o\, ,Y\rt\o)X\rt\t\, Y\rt\t)=\varepsilon(XY),\]
    for all $X, Y\in\cL$.
\end{lemma}
\begin{proof}
     First, it is easy to see that $\Sigma$ factors through all the balanced tensor products. We also see that $\Sigma$ is left $\overline{\BG}$-linear,
    \begin{align*}
       \Sigma(\overline{b}\CG X, Y)=&\Sigma(X_{+}\overline{\Gamma(X_{-}, b)}, Y)\\
       =&\Gamma(X_{++}Y_{+}, \Gamma(Y_{-}\o, \Gamma(X_{-}, b)\,X_{+-}\o)Y_{-}\t\, X_{+-}\t)\\
       =&\Gamma(X_{+}Y_{+}, \Gamma(Y_{-}\o, \Gamma(X_{-}\o, b)\,X_{-}\t)Y_{-}\t\, X_{-}\th)\\
       =&\Gamma(X_{+}Y_{+}, \Gamma(\Gamma(Y_{-}\o,X_{-}\o) X_{-}\t\, Y_{-}\t, b)Y_{-}\th\, X_{-}\th)\\
       =&\Gamma(\Gamma(X_{+}Y_{+}, \Gamma(Y_{-}\o, X_{-}\o)Y_{-}\t\, X_{-}\t), b) =\Sigma(X, Y)\CG b.
    \end{align*}
    By a similar method, one can check that  $\Sigma(X\CG\overline{b}, Y)=\Sigma(X,\overline{b}\CG Y)$.
    Moreover,
    \begin{align*}
        b\CG (\Gamma(X\o, Y\o)X\t\, Y\t)=&\Gamma(b, \Gamma(X\o, Y\o)X\t\, Y\t)X\th\, Y\th\\
        =&\Gamma(b\CG X\o, Y\o)X\t Y\t
    \end{align*}
    for all $b\in B$, $X,Y\in \cL$.
    Therefore, the second stated equality is also well defined. That is indeed holds is:
    \begin{align*}
        \Gamma(&\Sigma(X\ro, Y\ro), \Gamma(X\rt\o\, Y\rt\o)X\rt\t\, Y\rt\t)\\
        =&\Gamma(\Gamma(X\ro{}_{+}Y\ro{}_{+}, \Gamma(Y\ro{}_{-}\o, X\ro{}_{-}\o)Y\ro{}_{-}\t\, X\ro{}_{-}\t), \Gamma(X\rt\o, Y\rt\o)X\rt\t\,Y\rt\t)\\
        =&\Gamma(\Gamma(X\ro{}_{++}\o\,Y\ro{}_{++}\o\,, \Gamma(Y\ro{}_{-}, X\ro{}_{-})Y\ro{}_{+-}\o\, X\ro{}_{+-}\o)X\ro{}_{++}\t\,Y\ro{}_{++}\t\,Y\ro{}_{+-}\t\, X\ro{}_{+-}\t,\\
        \Gamma&(X\rt\o, Y\rt\o)X\rt\t\,Y\rt\t)\\
        =&\Gamma(X\ro{}_{++}\,Y\ro{}_{++}\,,\\
        \Gamma&(\Gamma(Y\ro{}_{-}, X\ro{}_{-})Y\ro{}_{+-}\o\,X\ro{}_{+-}\o,\Gamma(X\rt\o, Y\rt\o)X\rt\t\,Y\rt\t)Y\ro{}_{+-}\t\,X\ro{}_{+-}\t\,X\rt\th\,Y\rt\th),\\
        =&\Gamma(X\ro{}_{+}\,Y\ro{}_{+}\,,\\
        \Gamma&(\Gamma(Y\ro{}_{-}\o, X\ro{}_{-}\o)Y\ro{}_{-}\t\,X\ro{}_{-}\t,\Gamma(X\rt\o, Y\rt\o)X\rt\t\,Y\rt\t)Y\ro{}_{-}\th\,X\ro{}_{-}\th\,X\rt\th\,Y\rt\th),\\
        =&\Gamma(X\ro{}_{+}\,Y\ro{}_{+}\,,\\
        \Gamma&(Y\ro{}_{-}\o,\Gamma( X\ro{}_{-}\o, \Gamma(X\rt\o, Y\rt\o)X\rt\t\,Y\rt\t)\,X\ro{}_{-}\t\,X\rt\th\,Y\rt\th)Y\ro{}_{-}\t\,X\ro{}_{-}\th\,X\rt\fo\,Y\rt\fo)\\
        =&\Gamma(X\ro{}_{+}\,Y\ro{}_{+}\,,\\
        \Gamma&(Y\ro{}_{-}\o,\Gamma( \Gamma(X\ro{}_{-}\o, X\rt\o)X\ro{}_{-}\t\,X\rt\t,Y\rt\o)\,X\ro{}_{-}\th\,X\rt\th\,Y\rt\t)Y\ro{}_{-}\t\,X\ro{}_{-}\fo\,X\rt\fo\,Y\rt\th)\\
        =&\Gamma(X\ro{}_{++}\,Y\ro{}_{+}\,,\\
        \Gamma&(Y\ro{}_{-}\o,\Gamma( \Gamma(X\ro{}_{-}, X\rt\o)X\ro{}_{+-}\o\,X\rt\t,Y\rt\o)\,X\ro{}_{+-}\t\,X\rt\th\,Y\rt\t)Y\ro{}_{-}\t\,X\ro{}_{+-}\th\,X\rt\fo\,Y\rt\th)\\
        =&\Gamma(X\o{}_{+}\,Y\ro{}_{+}\,,
        \Gamma(Y\ro{}_{-}\o,\Gamma( X\o{}_{-}\o\,X\t,Y\rt\o)\,X\o{}_{-}\t\,X\th\,Y\rt\t)Y\ro{}_{-}\t\,X\o{}_{-}\th\,X\fo\,Y\rt\th)\\
        =&\Gamma(X\,Y\ro{}_{+}\,,
        \Gamma(Y\ro{}_{-}\o,\Gamma( 1,Y\rt\o)\,Y\rt\t)Y\ro{}_{-}\t\,Y\rt\th)\\
         =&\Gamma(X\,Y\ro{}_{+}\,,\Gamma(Y\ro{}_{-}\o,\,Y\rt\o)Y\ro{}_{-}\t\,Y\rt\t)\\
        =&\Gamma(X\,Y\o{}_{+}\,,     Y\o{}_{-}\,Y\t)\\
        =&\Gamma(X\,Y\,,     1) =\varepsilon(XY).
    \end{align*}
\end{proof}
\begin{corollary}\label{cor. comodule twist back}
    Let $\cL$ be a left Hopf algebroid over $B$, $\Gamma$ an invertible left 2-cocycle on $\cL$ and $\Sigma$ as defined above. Then for any $M,N\in {}^{\cL}\CM$, we have
    \[\Gamma(\Sigma(m\rmo, n\rmo),\Gamma(m\rz\mt, n\rz\mt)m\rz\mo n\rz\mo)m\rz\z\ot_{B} n\rz\z=m\ot_{B}n,\]
    for all $m\in M$ and $n\in N$. Moreover, we have
    \[\Sigma(X\ro, Y\ro)\CG\,X\rt\CG\,Y\rt =X_{+}Y_{+}\overline{\Gamma(Y_{-},X_{-})},\]
    for all $X, Y\in\CL$. We also have
    \[X_{\p +}\ot Y_{\p +}\,\overline{\Gamma(\Sigma(Y_{\m}, X_{\m}), \Gamma(Y_{\p -}\o, X_{\p -}\o)Y_{\p -}\t\, X_{\p -}\t)}=X\ot Y.\]
\end{corollary}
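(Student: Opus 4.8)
All three displayed identities are reformulations of the left-inverse property of $\Sigma$ established in Proposition~\ref{prop. left inverse}, at the levels of comodules, of the twisted product, and of the twisted left Hopf maps respectively; the plan is to obtain each by the same bookkeeping with the $2$-cocycle conditions used there.

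For the first identity I would repeat verbatim the chain of manipulations in the proof of Proposition~\ref{prop. left inverse}, replacing the coproducts $\Delta$ and $\Delta^{\Gamma}$ throughout by the coactions $\delta^{\cL}$ and its twist $\delta^{\cL^{\Gamma}}=\GH^{-1}\circ\delta^{\cL}$ from Lemma~\ref{lem. comodule equ}. Thus I substitute the definition of $\Sigma$, expand, and push everything through using only the $2$-cocycle conditions of Definition~\ref{Lcotwist} together with the Hopf-algebroid relations (\ref{equ. inverse lamda 6})--(\ref{equ. inverse lamda 8}), exactly as in the cited proof. Two ingredients change role: the coassociativity of the two coproducts is replaced by the left-comodule coassociativity of $\delta^{\cL}$ and $\delta^{\cL^{\Gamma}}$, and the interchange of $\cL$- and $\cL^{\Gamma}$-coproducts (Proposition~\ref{prop. cocomute coaction}(1)) is replaced by Corollary~\ref{cor. cocommute coaction}. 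The only genuinely new feature is that, instead of collapsing to $\varepsilon(XY)$, one keeps the surviving factor $m\rz\z\ot_{B}n\rz\z$ and reassembles it to $m\ot_{B}n$ at the end via the counit axiom $(\varepsilon\di_{B}\id)\circ\delta^{\cL}=\id$.

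For the second identity I would expand the twisted products $X\rt\CG Y\rt$ directly from (\ref{twistprod}) and substitute the definition of $\Sigma$, then run the cocycle computation as in Proposition~\ref{prop. left inverse}. The point to recognise is that the right-hand side $X_{+}Y_{+}\overline{\Gamma(Y_{-},X_{-})}$ is exactly $m_{\cL}\circ\GH(X\ot_{\overline{\BG}}Y)$ for the multiplicative coherence map $\GH(X\ot_{\overline{\BG}}Y)=X_{+}\ot_{\overline{B}}Y_{+}\overline{\Gamma(Y_{-},X_{-})}$ of Theorem~\ref{thm. twisted Hopf algebroid}, so the identity asserts that left multiplication by the base scalar $\Sigma(X\ro,Y\ro)$ inverts this coherence map. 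Concretely, after expanding $\CG$ and $\Sigma$, the cocycle condition Definition~\ref{Lcotwist}(1) merges the $\Sigma$ and $\Gamma$ factors and relations (\ref{equ. inverse lamda 6})--(\ref{equ. inverse lamda 8}) collapse the $(\cdot)_{\pm}$ legs; this is the multiplicative shadow of the left-inverse property and may equally be read off by specialising the first identity to the regular comodule (\ref{regular left comodule structure}).

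The third identity is the analogue of the second for the twisted left Hopf maps $X_{\p}\ot X_{\m}=\lambda^{\Gamma}{}^{-1}(X\di_{\BG}1)$, where the swapped order $\Sigma(Y_{\m},X_{\m})$ reflects that one is inverting $\lambda^{\Gamma}$ rather than the product. I would derive it by applying $\lambda^{\Gamma}{}^{-1}$ to the second identity and converting the nested $(\cdot)_{\pm}$-legs of $X_{\p}$ and $Y_{\p}$ by Proposition~\ref{prop. cocomute coaction}(3), namely $X_{\p +}\ot X_{\m}\ot X_{\p -}=X_{+}\ot X_{-}\ro\ot X_{-}\rt$. I expect the main obstacle throughout to be not any single cocycle or Hopf-algebroid relation but the repeated interchange of the two layered structures --- original versus twisted coproducts and coactions, and the two Hopf maps $(\cdot)_{\pm}$ versus $(\cdot)_{\p,\m}$ --- which commute only up to the cocommutativity statements of Proposition~\ref{prop. cocomute coaction} and Corollary~\ref{cor. cocommute coaction}, together with the perpetual need to check well-definedness over the correct balanced tensor product $\di_{B}$, $\ot_{\overline{B}}$, $\di_{\BG}$ or $\ot_{\overline{\BG}}$.
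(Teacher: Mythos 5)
Your plan is essentially the paper's for the first two identities, but it contains one concrete mix-up and takes a genuinely different (and heavier) route for the third, so let me separate these. For the first identity you do not need to re-run the computation of Proposition~\ref{prop. left inverse} with coactions in place of coproducts: the paper's proof is just to rewrite $m\rmo\ot m\rz\mo\ot m\rz\z$ as $m\mo\ro\ot m\mo\rt\ot m\z$ using the cocommutation of $\delta^{\cL}$ and $\delta^{\cL^{\Gamma}}$ (Corollary~\ref{cor. cocommute coaction}), at which point the scalar identity of Proposition~\ref{prop. left inverse} applies verbatim with $X=m\mo$, $Y=n\mo$ to give $\varepsilon(m\mo n\mo)m\z\ot_B n\z=m\ot_B n$ by the counit axiom; your description of the ingredients is right, but the cited proposition is invoked, not reproved. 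For the second identity your primary route (expand $X\rt\CG Y\rt$ via (\ref{twistprod}), then use the cocycle conditions, the $\pm$-relations and Proposition~\ref{prop. cocomute coaction}) is exactly the paper's. However, your alternative reading of it as ``the first identity specialised to the regular comodule (\ref{regular left comodule structure})'' points at the wrong comodule: (\ref{regular left comodule structure}) is $\delta(X)=X_-\ot X_+$, whose twisted coaction is $X\mapsto X_\m\ot X_\p$, and it is \emph{this} specialisation --- with the two tensor factors interchanged, because its $B$-bimodule structure is $b.X.b'=\overline{b'}X\overline{b}$ --- that yields the \emph{third} identity. The second identity instead uses the special case of the first identity for the coproduct coaction $\delta=\Delta$ (twisting to $\Delta^\Gamma$), which gives $\Gamma(\Sigma(X\ro,Y\ro),\Gamma(X\rt\o,Y\rt\o)X\rt\t Y\rt\t)X\rt\th Y\rt\th=XY$ as the launching point. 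Correspondingly, for the third identity the paper does not apply $\lambda^{\Gamma}{}^{-1}$ to the second identity at all; it simply observes that $X\rmo\ot X\rz=X_\m\ot X_\p$ for the comodule (\ref{regular left comodule structure}) and reads the third formula off from the first identity after the flip. Your route via $\lambda^{\Gamma}{}^{-1}$ and Proposition~\ref{prop. cocomute coaction}(3) is plausible, but you would still have to account for the reversed bimodule structure to get the arguments in the order $\Sigma(Y_\m,X_\m)$ and $\Gamma(Y_{\p-}\o,X_{\p-}\o)Y_{\p-}\t X_{\p-}\t$, and the specialisation argument is both shorter and explains that ordering for free.
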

\begin{proof}
    By Lemma~\ref{prop. left inverse}, we have
    \begin{align*}
        \Gamma(\Sigma(&m\rmo, n\rmo),\Gamma(m\rz\mt, n\rz\mt)m\rz\mo n\rz\mo)m\rz\z\ot_{B} n\rz\z\\
        =&\Gamma(\Sigma(m\mo\ro, n\mo\ro),\Gamma(m\mo\rt\o, n\mo\rt\o)m\mo\rt\t n\mo\rt\t)m\z\ot_{B} n\z\\
        =&\varepsilon(m\mo\,n\mo)m\z\ot_{B} n\z=m\ot_{B}n.
    \end{align*}
     As a special case, we have
    \[\Gamma(\Sigma(X\ro, Y\ro),\Gamma(X\rt\o, Y\rt\o)X\rt\t\, Y\rt\t)X\rt\th\, Y\rt\th=XY,\]
    for all $X, Y\in\CL$. Therefore,
    \begin{align*}
        \Sigma(&X\ro, Y\ro)\CG\,X\rt\CG\,Y\t=\Sigma(X\ro, Y\ro)\CG\,(X\rt\CG\,Y\rt)\\
        =&\Gamma(\Sigma(X\ro, Y\ro),\Gamma(X\rt\o, Y\rt\o)X\rt\t{}_{+}\o\, Y\rt\t{}_{+}\o)\\
        &X\rt\t{}_{+}\t\, Y\rt\t{}_{+}\t\,\overline{\Gamma(Y\rt\t{}_{-},X\rt\t{}_{-})}\\
         =&\Gamma(\Sigma(X\ro, Y\ro),\Gamma(X\rt\o, Y\rt\o)X\rt\t\, Y\rt\t)
        X\rt\th{}_{+}\, Y\rt\th{}_{+}\,\overline{\Gamma(Y\rt\th{}_{-},X\rt\th{}_{-})}\\
        =&\Gamma(\Sigma(X\ro, Y\ro),\Gamma(X\rt{}_{+}\o, Y\rt{}_{+}\o)X\rt{}_{+}\t\, Y\rt{}_{+}\t)
        X\rt{}_{+}\th\, Y\rt{}_{+}\th\,\overline{\Gamma(Y\rt{}_{-},X\rt{}_{-})}\\
        =&\Gamma(\Sigma(X{}_{+}\ro, Y{}_{+}\ro),\Gamma(X{}_{+}\rt\o, Y{}_{+}\rt\o)X{}_{+}\rt\t\, Y{}_{+}\rt\t)
        X{}_{+}\rt\th\, Y{}_{+}\rt\th\,\overline{\Gamma(Y{}_{-},X{}_{-})}\\
        =&X_{+}Y_{+}\overline{\Gamma(Y{}_{-},X{}_{-})},
    \end{align*}
    where the 5th step use Proposition \ref{prop. cocomute coaction}. For the last equality, recall that $\cL$ is a left $\cL$-comodule via $X\mo\ot X\z=X_{-}\ot X_{+}$. Using $\Gamma(X_{\p}\ot X_{\m})=X_{+}\ot X_{-}$ by Theorem \ref{thm. twisted Hopf algebroid}, it is not hard to see (by flipping the position of the two terms) that $X\rmo\ot X\rz=X_{\m}\ot X_{\p}$, which results in the last equality.
\end{proof}
\begin{lemma}\label{lem. sigma is 2 cocycle}
     Let $\cL$ be a left Hopf algebroid over $B$ and $\Gamma$ an invertible left 2-cocycle on $\cL$. Then $\Sigma$ as defined above is a left 2-cocycle on $\cL^{\Gamma}$.
\end{lemma}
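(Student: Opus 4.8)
The plan is to verify the two defining conditions of a left 2-cocycle from Definition~\ref{Lcotwist}, now read on $\cL^\Gamma$: that is, with respect to the twisted product $\cdot_\Gamma$, the twisted coproduct $\Delta^\Gamma$ (written $X\mapsto X\ro\di X\rt$) and the twisted counit $\varepsilon^\Gamma(X)=\Gamma(X_{+},X_{-})$ produced in Theorem~\ref{thm. twisted Hopf algebroid}. Well-definedness of $\Sigma$ as an element of $\Hom_{\overline{\BG}-}(\cL^\Gamma\otimes_{\overline{\BG}}\cL^\Gamma,\BG)$, i.e. factorisation through $\otimes_{\overline{\BG}}$ together with left $\overline{\BG}$-linearity, was already established in Proposition~\ref{prop. left inverse}. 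Hence only the normalisation and the main cocycle identity remain.

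For the normalisation $\Sigma(1,X)=\varepsilon^\Gamma(X)=\Sigma(X,1)$ I would compute directly from $\Sigma(X,Y)=\Gamma(X_{+}Y_{+},\Gamma(Y_{-}\o,X_{-}\o)Y_{-}\t X_{-}\t)$, substituting $1_{+}\ot_{\overline{B}}1_{-}=1\ot_{\overline{B}}1$ from \eqref{equ. inverse lamda 4}. In either slot this leaves an inner $\Gamma$ with a $1$ in one argument, so Definition~\ref{Lcotwist}(2) turns it into $\varepsilon(X_{-}\o)$, and the left counit axiom $\varepsilon(X_{-}\o)X_{-}\t=X_{-}$ then collapses the expression to $\Gamma(X_{+},X_{-})=\varepsilon^\Gamma(X)$.

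The crux is the cocycle identity
\[\Sigma(X,\;\Sigma(Y\ro,Z\ro)\cdot_\Gamma Y\rt\cdot_\Gamma Z\rt)=\Sigma(\Sigma(X\ro,Y\ro)\cdot_\Gamma X\rt\cdot_\Gamma Y\rt,\;Z),\]
and here the decisive simplification is Corollary~\ref{cor. comodule twist back}, which evaluates each inner bracket explicitly as $\Sigma(A\ro,B\ro)\cdot_\Gamma A\rt\cdot_\Gamma B\rt=A_{+}B_{+}\overline{\Gamma(B_{-},A_{-})}$ (using associativity of $\cdot_\Gamma$ to absorb the scalar $\Sigma$-factor). Applying this to $(Y,Z)$ on the left and to $(X,Y)$ on the right reduces the whole condition to the purely untwisted statement
\[\Sigma(X,\;Y_{+}Z_{+}\overline{\Gamma(Z_{-},Y_{-})})=\Sigma(X_{+}Y_{+}\overline{\Gamma(Y_{-},X_{-})},\;Z),\]
an equation now living entirely inside the original left Hopf algebroid $\cL$, with $\Sigma$ meaning its defining formula applied to these $\cL$-elements.

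The remaining work, which I expect to be the main obstacle, is to expand both sides of this last identity and check equality. I would compute the $(\cdot)_{\pm}$ of the products $Y_{+}Z_{+}\overline{\Gamma(Z_{-},Y_{-})}$ and $X_{+}Y_{+}\overline{\Gamma(Y_{-},X_{-})}$ via the multiplicativity \eqref{equ. inverse lamda 3}, move the bar-scalars across the $\pm$-legs using \eqref{equ. inverse lamda 9}--\eqref{equ. inverse lamda 10}, resolve the iterated coproducts on the $(\cdot)_{-}$ legs with \eqref{equ. inverse lamda 5}--\eqref{equ. inverse lamda 6}, and then repeatedly apply the 2-cocycle condition of $\Gamma$ to reassociate the nested $\Gamma$'s until the two sides coincide. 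This is a long Sweedler-index bookkeeping exercise of exactly the flavour of, and closely parallel to, the left-inverse computation in Proposition~\ref{prop. left inverse}; the difficulty lies entirely in managing the profusion of $\pm$ and bracket indices rather than in any new structural ingredient.
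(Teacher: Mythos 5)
Your proposal is correct and follows essentially the same route as the paper: the paper's proof likewise dispatches the normalisation quickly and then reduces the cocycle identity by applying Corollary~\ref{cor. comodule twist back} to rewrite each inner bracket as $A_{+}B_{+}\overline{\Gamma(B_{-},A_{-})}$, after which both sides are expanded to $\Gamma(X_{+}Y_{+}Z_{+},\,\cdots)$ using exactly the identities \eqref{equ. inverse lamda 3}, \eqref{equ. inverse lamda 6} and the 2-cocycle condition of $\Gamma$ that you list. The final bookkeeping you defer is indeed routine and occupies only a few lines in the paper.
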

\begin{proof}
    It is not hard to see that $\Sigma(X, 1)=\varepsilon^{\Gamma}(X)=\Sigma(1, X)$. For $\Sigma$ a 2-cocycle, on the one hand,
    \begin{align*}
        \Sigma(\Sigma(X\ro, &Y\ro)\CG\,X\rt\,\CG\, Y\rt, Z)\\
        =&\Sigma(X_{+}Y_{+}\overline{\Gamma(Y_{-}, X_{-})}, Z)\\
        =&\Gamma(X_{++}Y_{++}Z_{+}, \Gamma(Z_{-}\o, \Gamma(Y_{-}, X_{-})\,Y_{+-}\o\,X_{+-}\o)Z_{-}\t\, Y_{+-}\t\,X_{+-}\t)\\
        =&\Gamma(X_{+}Y_{+}Z_{+}, \Gamma(Z_{-}\o, \Gamma(Y_{-}\o, X_{-}\o)\,Y_{-}\t\,X_{-}\t)Z_{-}\t\, Y_{-}\th\,X_{-}\th), 
    \end{align*}
    where the 1st step uses Corollary \ref{cor. comodule twist back}. On the other hand,
    \begin{align*}
        \Sigma(X, \Sigma(Y\ro, &Z\ro)\CG Y\rt\CG Z\ro)\\
=&\Sigma(X,Y_{+}Z_{+}\overline{\Gamma(Z_{-}, Y_{-})})\\
=&\Gamma(X_{+}Y_{++}Z_{++}, \Gamma(\Gamma(Z_{-},Y_{-})Z_{+-}\o\,Y_{+-}\o, X_{-}\o)Z_{+-}\t\,Y_{+-}\t\, X_{-}\t)\\
=&\Gamma(X_{+}Y_{+}Z_{+}, \Gamma(\Gamma(Z_{-}\o,Y_{-}\o)Z_{-}\t\,Y_{-}\t, X_{-}\o)Z_{-}\th\,Y_{-}\th\, X_{-}\t).
    \end{align*}
The two expressions are equal since $\Gamma$ is  a 2-cocycle.
\end{proof}

We still need to show that $\Sigma$ is an invertible left 2-cocycle, i.e. that $\Sigma^{\#}$ is invertible.

\begin{lemma}\label{prop. right inverse}
     Let $\cL$ be a left Hopf algebroid over $B$ and $\Gamma$ an invertible left 2-cocycle on $\cL$. Then $\Sigma$ as defined above satisfies $bX=\Sigma(b, X\ro)\CG X\rt$ and
    \[\Sigma(\Gamma(X\o, Y\o), \Sigma(X\t\ro\,, Y\t\ro)\CG\,X\t\rt\,\CG\, Y\t\rt)=\varepsilon^{\Gamma}(X\CG\,Y),\]
    for all $X, Y\in \cL$.
\end{lemma}
\begin{proof}
First, it is not hard to see that $\Sigma(Xb, Y)=\Sigma(X, bY)$  
for all $b\in B$, $X, Y\in \cL$. Also, by the 2nd equality of Corollary \ref{cor. comodule twist back}, we have $bX=\Sigma(b, X\ro)\CG X\rt$.
As a result,
\begin{align*}
    b(\Sigma(X\ro\,, Y\ro)&\CG\,X\rt\,\CG\, Y\rt)\\
    =&\Sigma(b, \Sigma(X\ro\,, Y\ro)\CG\,X\rt\,\CG\, Y\rt)\CG\,X\rth\,\CG\, Y\rth\\
    =&\Sigma(\Sigma(b, X\ro)\CG\,X\rt,  Y\rt)\CG\,X\rth\,\CG\, Y\rth =\Sigma(b\,X\ro,  Y\rt)\CG\,X\rt\,\CG\, Y\rth,
\end{align*}
where the 2nd step uses Lemma \ref{lem. sigma is 2 cocycle}.
Therefore, the formula in the statement is well defined. We now compute
\begin{align*}
    \Sigma(\Gamma(X\o, &Y\o), \Sigma(X\t\ro\,, Y\t\ro)\CG\,X\t\rt\,\CG\, Y\t\rt)\\
     =&\Sigma(\Gamma(X\o, Y\o), X\t{}_{+}Y\t{}_{+}\overline{\Gamma(Y\t{}_{-},X\t{}_{-})})\\
    =&\Gamma(\Gamma(X\o, Y\o)\, X\t{}_{++}Y\t{}_{++}\,,\Gamma(Y\t{}_{-},X\t{}_{-})\,Y\t{}_{+-}X\t{}_{+-}) =\varepsilon^{\Gamma}(X\CG\,Y),
\end{align*}
where the 1st step uses Corollary \ref{cor. comodule twist back}.
\end{proof}

\begin{corollary}\label{cor. comodule twist forward}
    Let $\cL$ be a left Hopf algebroid over $B$, $\Gamma$ an invertible left 2-cocycle on $\cL$ and $\Sigma$  defined as above. Then for any $M,N\in {}^{\cL}\CM$, we have
    \[\Sigma(\Gamma(m\mo, n\mo),\Sigma(m\z{}_{\scriptscriptstyle{[-2]}}, n\z{}_{\scriptscriptstyle{[-2]}})\CG\,m\z\rmo\CG\, n\z\rmo)\CG\,m\z\rz\ot_{\BG} n\z\rz=m\ot_{\BG}n,\]
    for all $m\in M$ and $n\in N$.
\end{corollary}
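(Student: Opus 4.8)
The plan is to follow the proof of Corollary~\ref{cor. comodule twist back} \emph{mutatis mutandis}, interchanging the roles of $\Gamma$ and $\Sigma$ and of the two coactions $\delta^{\cL}$ and $\delta^{\cL^{\Gamma}}$, and to reduce the identity to the algebra-level statement of Proposition~\ref{prop. right inverse}. First I would verify that the left-hand side descends to the balanced tensor product $M\ot_{\BG}N$: this is the same factorisation check used for $\Gamma^{\#}$ and $\Sigma$ elsewhere, relying on the left $\overline{\BG}$-linearity of $\Sigma$ established in Proposition~\ref{prop. left inverse} together with the $2$-cocycle conditions for $\Gamma$ and $\Sigma$.

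The key step is a coassociativity rewriting of the iterated coactions. The element $m\mo\ot m\z{}_{\scriptscriptstyle{[-2]}}\ot m\z\rmo\ot m\z\rz$ is obtained by coacting on $m$ with $\delta^{\cL}$ and then coacting twice on $m\z$ with $\delta^{\cL^{\Gamma}}$. Applying the relation $(\Delta\ot\id)\circ\delta^{\cL^{\Gamma}}=(\id\ot\delta^{\cL^{\Gamma}})\circ\delta^{\cL}$ of Corollary~\ref{cor. cocommute coaction}, and then applying $\Delta^{\Gamma}$ to the resulting $\cL^{\Gamma}$-comodule component, this same element equals $m\rmo\o\ot (m\rmo\t)\ro\ot(m\rmo\t)\rt\ot m\rz$; in other words it is the result of coacting on $m$ with $\delta^{\cL^{\Gamma}}$ and then splitting the component $m\rmo$ by $\Delta$ and $\Delta^{\Gamma}$. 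Carrying out the same substitution for $n$ and writing $X=m\rmo$, $Y=n\rmo$, the inner expression of the left-hand side becomes exactly $\Sigma\big(\Gamma(X\o,Y\o),\,\Sigma(X\t\ro,Y\t\ro)\CG X\t\rt\CG Y\t\rt\big)$.

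By Proposition~\ref{prop. right inverse} this inner expression equals $\varepsilon^{\Gamma}(X\CG Y)=\varepsilon^{\Gamma}(m\rmo\CG n\rmo)$, so the whole left-hand side collapses to $\varepsilon^{\Gamma}(m\rmo\CG n\rmo)\CG\,m\rz\ot_{\BG} n\rz$. Finally, recall from Lemma~\ref{lem. comodule equ} that $\Gamma(M)\ot_{\BG}\Gamma(N)$ is a left $\cL^{\Gamma}$-comodule with codiagonal coaction $m\rmo\CG n\rmo\di_{\BG} m\rz\ot_{\BG} n\rz$; the displayed expression is therefore $(\varepsilon^{\Gamma}\di_{\BG}\id)\circ\delta^{\cL^{\Gamma}}$ applied to $m\ot_{\BG}n$, which equals $m\ot_{\BG}n$ by the counit axiom for $\cL^{\Gamma}$-comodules.

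The main obstacle I anticipate is the bookkeeping in the coassociativity step: keeping the Sweedler indices of the twice-iterated mixed coactions aligned so that exactly the pattern of Proposition~\ref{prop. right inverse} appears, while tracking over which of the $\BG$- and $\overline{\BG}$-module structures the various $\CG$-products and tensor products are balanced. As in Corollary~\ref{cor. comodule twist back}, this is routine but index-heavy, and no new ideas beyond Corollary~\ref{cor. cocommute coaction}, Proposition~\ref{prop. right inverse} and the comodule counit axiom are needed.
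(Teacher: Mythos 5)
Your proposal is correct and matches the paper's intended argument: the paper's proof is literally the one-line remark that the claim follows as in Corollary~\ref{cor. comodule twist back} with Proposition~\ref{prop. right inverse} in place of Proposition~\ref{prop. left inverse}, and your write-up carries out exactly that plan (cocommutation of the two coactions via Corollary~\ref{cor. cocommute coaction}, reduction of the inner expression to $\varepsilon^{\Gamma}(m\rmo\CG n\rmo)$ by Proposition~\ref{prop. right inverse}, and the counit axiom for the codiagonal $\cL^{\Gamma}$-coaction). No gaps; the well-definedness check you flag is a sensible addition that the paper leaves implicit.
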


\begin{proof}
    The proof is similar to Corollary \ref{cor. comodule twist back},  using Lemma~\ref{prop. right inverse}.
\end{proof}

We now combine these results.

\begin{theorem}\label{thm. invertible 2 cocycle}
     Let $\cL$ be a left Hopf algebroid over $B$, $\Gamma$ an invertible left 2-cocycle on $\cL$ and $\Sigma$ as defined above. Then $\Sigma$ is an invertible left 2-cocycle on $\cL^{\Gamma}$ with $\Sigma^{\#-1}=\GH$. Moreover, $(\cL^{\Gamma})^{\Sigma}=\cL$. We call $\Sigma$ the inverse of $\Gamma$ and denote it by $\Gamma^{-1}$.
\end{theorem}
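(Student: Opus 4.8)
The plan is to derive everything from the two inverse relations already in hand, reserving a single genuine computation for the product. Since Lemma~\ref{lem. sigma is 2 cocycle} already shows that $\Sigma$ is a left 2-cocycle on $\cL^{\Gamma}$, the only thing left for the first assertion is invertibility of the coherent map $\Sigma^{\#}$. I would observe that, under the identification ${}^{\cL}\CM\cong{}^{\cL^{\Gamma}}\CM$ of Lemma~\ref{lem. comodule equ}, the two composites $\GH_{M,N}\circ\Sigma^{\#}_{M,N}$ and $\Sigma^{\#}_{M,N}\circ\GH_{M,N}$ are \emph{precisely} the left-hand sides of Corollary~\ref{cor. comodule twist back} and Corollary~\ref{cor. comodule twist forward}. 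Indeed, expanding $\Sigma^{\#}(m\ot n)=\Sigma(m\rmo,n\rmo)m\rz\ot n\rz$ and then applying $\GH(m'\ot n')=\Gamma(m'\mo,n'\mo)m'\z\ot n'\z$ reproduces the nested expression of Corollary~\ref{cor. comodule twist back}, which equals $m\ot_{B}n$; the opposite order reproduces Corollary~\ref{cor. comodule twist forward}. As these hold for all $\cL$-comodules $M,N$, we conclude $\Sigma^{\#}$ is invertible with $\Sigma^{\#-1}=\GH$, so $\Sigma$ is an invertible left 2-cocycle and Theorem~\ref{thm. twisted Hopf algebroid} applies, producing a left Hopf algebroid $(\cL^{\Gamma})^{\Sigma}$ over $(\BG)^{\Sigma}$.

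For the identity $(\cL^{\Gamma})^{\Sigma}=\cL$ I would check base, coproduct, product and counit in turn. The base is quickest: from $s(a)_{+}\ot_{\BB}s(a)_{-}=s(a)\ot_{\BB}1$ (which follows from (\ref{equ. inverse lamda 4}) and (\ref{equ. inverse lamda 9})) together with $\Delta(1)=1\ot 1$ and the normalisation of $\Gamma$, the twisted product on $(\BG)^{\Sigma}$ collapses to $a\cdot_{\Sigma}b=\Sigma(a,b)=\Gamma(s(a)s(b),1)=\varepsilon(s(ab))=ab$, so $(\BG)^{\Sigma}=B$. The coproduct is then formal: by construction $\Delta^{\Sigma}=\Sigma^{\#-1}\circ\Delta^{\Gamma}=\GH\circ\Delta^{\Gamma}$, and since the defining relation of $\Delta^{\Gamma}$ is exactly $\GH\circ\Delta^{\Gamma}=\Delta$, we obtain $\Delta^{\Sigma}=\Delta$ at once.

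The product is the heart of the matter and is where I expect the real work. Writing out (\ref{twistprod}) for $\Sigma$ on $\cL^{\Gamma}$ gives
\[
X\cdot_{\Sigma}Y=\Sigma(X\ro,Y\ro)\CG(X\rt)_{\p}\CG(Y\rt)_{\p}\CG\overline{\Sigma((Y\rt)_{\m},(X\rt)_{\m})},
\]
in which the $\cL^{\Gamma}$-coproduct $X\ro\di X\rt$ and the $\lambda^{\Gamma}$-translation $(\,\cdot\,)_{\p},(\,\cdot\,)_{\m}$ are entangled. My strategy is to push the $\lambda^{\Gamma}$-translations through the coproduct using the two cocommutativity identities of Proposition~\ref{prop. cocomute coaction}(2)--(3), so as to expose the combination $\Sigma(X\ro,Y\ro)\CG X\rt\CG Y\rt$, and then to apply the key identity $\Sigma(X\ro,Y\ro)\CG X\rt\CG Y\rt=X_{+}Y_{+}\overline{\Gamma(Y_{-},X_{-})}$ from Corollary~\ref{cor. comodule twist back} to collapse everything back to the original product $XY$ of $\cL$. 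This bookkeeping of the interleaved $[1],[2]$ and $\hat+,\hat-$ labels is the main obstacle; the relations of Proposition~\ref{prop. cocomute coaction} are exactly what is needed to let the two translation systems commute through one another.

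Finally, for the counit I would invoke uniqueness: once the $B$-coring structure of $(\cL^{\Gamma})^{\Sigma}$ is shown to coincide with that of $\cL$ (same base $B$, the same bimodule structure now recovered from the product, and $\Delta^{\Sigma}=\Delta$), the counit of a coring is determined, so $\varepsilon^{\Sigma}=\varepsilon$ follows automatically. Alternatively one can verify $\varepsilon^{\Sigma}(X)=\Sigma(X_{\p},X_{\m})=\varepsilon(X)$ directly using Proposition~\ref{prop. cocomute coaction}(3) and the normalisation of $\Sigma$. Either way the four pieces assemble to $(\cL^{\Gamma})^{\Sigma}=\cL$, which justifies writing $\Gamma^{-1}:=\Sigma$.
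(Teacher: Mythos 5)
Your proposal is correct and follows essentially the same route as the paper: invertibility of $\Sigma^{\#}$ with inverse $\GH$ via Corollaries~\ref{cor. comodule twist back} and \ref{cor. comodule twist forward} (the paper needs the small extra step of Proposition~\ref{prop. right inverse} and the $\Sigma$-cocycle identity to massage $\Sigma^{\#}\circ\GH$ into the form of Corollary~\ref{cor. comodule twist forward}, so the composites are not quite ``precisely'' those left-hand sides), then recovery of base, coproduct, product (via Proposition~\ref{prop. cocomute coaction} and the second and third identities of Corollary~\ref{cor. comodule twist back}) and counit. The only genuine variation is your suggestion to get $\varepsilon^{\Sigma}=\varepsilon$ from uniqueness of the counit of a coring, which is a legitimate shortcut over the paper's direct computation of $\Sigma(X_{\p},X_{\m})=\varepsilon(X)$, provided you first verify that the $B$-bimodule structure agrees (the paper's identities $Xb=\Sigma(X\ro,b)\CG X\rt$ and $\overline{b}X=X_{\p}\CG\overline{\Sigma(X_{\m},b)}$).
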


\begin{proof}
First,  $(B^{\Gamma})^{\Sigma}=B$, since $a\cdot_{\Sigma}b=\Sigma(a, b)=\Gamma(ab,1)=ab$. We next check that $\Sigma^{\#}$ is invertible with inverse $\GH$. Recall that
    \[\Sigma^{\#}(m\ot_{B}n)=\Sigma(m\rmo, n\rmo)\CG m\rz\ot_{\BG}n\rz,\]
    for all $m\in M\in {}^{\cL^{\Gamma}}\CM$ and $n\in N\in {}^{\cL^{\Gamma}}\CM$. Then, by Corollary \ref{cor. comodule twist forward},
    \begin{align*}
        \Sigma^{\#}&\circ \GH(m\ot_{\BG}n)\\
        =&\Sigma^{\#}(\Gamma(m\mo,n\mo)m\z\ot_{B} n\z)\\
        =&\Sigma(\Gamma(m\mo,n\mo)m\z\ro, n\z\ro)\CG\,m\z\rz\ot_{\BG} n\z\rz\\
        =&\Sigma(\Sigma(\Gamma(m\mo, n\mo),m\z{}_{\scriptscriptstyle{[-2]}})\CG\,m\z{}_{\scriptscriptstyle{[-1]}},  n\z\rmo)\CG\,m\z\rz\ot_{\BG} n\z\rz\\
        =&\Sigma(\Gamma(m\mo, n\mo),\Sigma(m\z{}_{\scriptscriptstyle{[-2]}}, n\z{}_{\scriptscriptstyle{[-2]}})\CG\,m\z\rmo\CG\, n\z\rmo)\CG\,m\z\rz\ot_{\BG} n\z\rz\\
        =&m\ot_{\BG}n,
    \end{align*}
    where the 3rd step uses Lemma~\ref{prop. right inverse}. Also, we have
    \begin{align*}
        \GH&\circ \Sigma^{\#}(n\ot_{B}m)\\
        =&\GH(\Sigma(m\rmo, n\rmo)\CG m\rz\ot_{\BG}n\rz)\\
         =&\GH(\Gamma(\Sigma(m\rmo, n\rmo), m\rz\mo)m\rz\z \ot_{\BG}n\rz)\\
         =&\Gamma(\Gamma(\Sigma(m\rmo, n\rmo), m\rz\mt)m\rz\mo,n\rz\mo)m\rz\z\ot_{B}n\rz\z\\
         =&\Gamma(\Sigma(m\rmo, n\rmo),\Gamma(m\rz\mt, n\rz\mt)m\rz\mo n\rz\mo)m\rz\z\ot_{B} n\rz\z =n\ot_{B}m,
    \end{align*}
    where the last step uses Corollary \ref{cor. comodule twist back}. Hence, $\Sigma$ is an invertible left 2-cocycle on $\cL^{\Gamma}$. Similarly to Lemma~\ref{prop. right inverse}, one can show that $Xb=\Sigma(X\ro, b)\CG X\rt$. Moreover, 
    \begin{align*}
        X_{\p}\CG\overline{\Sigma(X_{\m}, b)}=&X_{\p +}\overline{\Gamma(\Sigma(X_{\m}, b),X_{\p -})}\\
        =&X_{\p +}\overline{\Gamma(\Gamma(X_{\m +}b,X_{\m -}),X_{\p -})}\\
        =&X_{\p +}\overline{\Gamma(\Gamma(X_{\m +}\o\,b,X_{\m -}\o)X_{\m +}\t\,X_{\m -}\t,X_{\p -})}\\
        =&X_{\p +}\overline{\Gamma(X_{\m +}\,b, \Gamma(X_{\m -}\o, X_{\p -}\o)X_{\m -}\t\,X_{\p -}\t)}\\
        =&X_{\p ++}\overline{\Gamma(X_{\m ++}\,b, \Gamma(X_{\m -}, X_{\p -})X_{\m +-}\,X_{\p +-})}\\
        =&X_{++}\overline{\Gamma(X_{- +}\,b, X_{--}\,X_{+-})}\\
        =&X_{+}\overline{\Gamma(X_{-}\o{}_{+}\,b, X_{-}\o{}_{-}\,X_{-}\t)}\\
        =&X_{+}\overline{\Gamma(X_{-}\,b, 1)}=X_{+}\overline{\varepsilon(X_{-}b)}=\overline{b}X.
    \end{align*}
    Similarly, $ X\overline{b}=X_{\p}\CG\overline{\Sigma(b,X_{\m})}$. Now, we show that $\Sigma$ cotwists $\cL^{\Gamma}$ to recover the $B^e$-ring structure on $\cL$,
    \begin{align*}
        \Sigma(X_{\p}\ro, &Y_{\p}\ro)\CG X_{\p}\rt\CG Y_{\p}\rt\CG\, \overline{\Sigma(Y{}_{\m}, X{}_{\m})}\\
        =&(X_{\p +}\,Y_{\p +}\overline{\Gamma(Y_{\p -},\,X_{\p -})})\CG\, \overline{\Sigma(Y{}_{\m}, X{}_{\m})}\\
        =&X_{\p ++}\,Y_{\p ++}\overline{\Gamma(\Sigma(Y{}_{\m}, X{}_{\m}),\Gamma(Y_{\p -},\,X_{\p -})Y_{\p +-}\,X_{\p +-})}\\
        =&X_{\p +}\,Y_{\p +}\overline{\Gamma(\Sigma(Y{}_{\m}, X{}_{\m}),\Gamma(Y_{\p -}\o,\,X_{\p -}\o)Y_{\p -}\t\,X_{\p -}\t)} =XY,
    \end{align*}
    where the 1st and the last steps use Corollary \ref{cor. comodule twist back}. As $\GH$ is the inverse of $\Sigma^{\#}$, we have $(\Delta^{\Gamma})^\Sigma=\Delta$. Also,
    \begin{align*}
        \Sigma(X_{\p}, X_{\m})=&\Gamma(X_{\p +} X_{\m +}, \Gamma(X_{\m -}\o, X_{\p -}\o)X_{\m -}\t X_{\p -}\t)\\
        =&\Gamma(X_{\p ++} X_{\m ++}, \Gamma(X_{\m -}, X_{\p -})X_{\m +-} X_{\p +-})\\
        =&\Gamma(X_{++} X_{-+}, X_{--} X_{+-})=\Gamma(X_{+} X_{-},1)=\varepsilon(X_{+}X_{-})=\varepsilon(X)
    \end{align*}
    so that $(\varepsilon^{\Gamma})^{\Sigma}=\varepsilon$.
\end{proof}

Here we also give a property for later use:
\begin{proposition}\label{prop. twist forward}
      Let $\cL$ be a left Hopf algebroid over $B$ and $\Gamma$ an invertible left 2-cocycle on $\cL$. Then $(\Gamma^{-1})^{-1}=\Gamma$,
and
    \[\Gamma(X\o, Y\o)\,X\t\,Y\t =X_{\p}\CG\,Y_{\p}\CG\overline{\Gamma^{-1}(Y_{\m},X_{\m})},\]
    for all $X, Y\in\CL$.
\end{proposition}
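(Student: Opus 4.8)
The plan is to read off both identities from the involutivity that is already latent in Theorem~\ref{thm. invertible 2 cocycle}, by systematically applying the earlier results to the Hopf algebroid $\cL^\Gamma$ with its invertible cocycle $\Sigma:=\Gamma^{-1}$ rather than to $(\cL,\Gamma)$. Recall from Theorem~\ref{thm. invertible 2 cocycle} that $\Sigma$ is an invertible left $2$-cocycle on $\cL^\Gamma$, that $(\cL^\Gamma)^\Sigma=\cL$ as left Hopf algebroids over $B$, and that $\Sigma^{\#-1}=\GH$. The entire construction is symmetric under the interchange $(\cL,\Gamma)\leftrightarrow(\cL^\Gamma,\Sigma)$, and it is this symmetry, together with the dictionary $(\cL^\Gamma)^\Sigma=\cL$, that produces both statements.

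For $(\Gamma^{-1})^{-1}=\Gamma$ I would form the inverse of $\Sigma$ by applying Proposition~\ref{prop. left inverse} and Theorem~\ref{thm. invertible 2 cocycle} to the pair $(\cL^\Gamma,\Sigma)$. This yields a left $2$-cocycle $\Sigma^{-1}=(\Gamma^{-1})^{-1}$ on $(\cL^\Gamma)^\Sigma=\cL$ with coherence map $(\Sigma^{-1})^{\#}=(\Sigma^{\#})^{-1}=\GH=\Gamma^\#$; thus $\Sigma^{-1}$ and $\Gamma$ already share the same coherence map. To promote this to equality of the cocycles, I would use that Proposition~\ref{prop. left inverse} applied to $(\cL^\Gamma,\Sigma)$ characterises $\Sigma^{-1}$ through
\[\Sigma\big(\Sigma^{-1}(X\o,Y\o),\,\Sigma(X\t\ro,Y\t\ro)\CG X\t\rt\CG Y\t\rt\big)=\varepsilon^{\Gamma}(X\CG Y),\]
which is precisely the relation that Proposition~\ref{prop. right inverse} has already verified with $\Gamma$ in the first slot. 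Hence $\Gamma$ and $\Sigma^{-1}$ are both left inverses of $\Sigma$ in the sense of these propositions; since $\Sigma$ is invertible this inverse is unique, and therefore $(\Gamma^{-1})^{-1}=\Gamma$.

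The second identity then follows by transcribing the second displayed formula of Corollary~\ref{cor. comodule twist back}, namely $\Sigma(X\ro,Y\ro)\CG X\rt\CG Y\rt=X_+Y_+\overline{\Gamma(Y_-,X_-)}$, to the pair $(\cL^\Gamma,\Sigma)$. Under this substitution the coproduct of $(\cL^\Gamma)^\Sigma=\cL$ is the original $\Delta$, the product of $(\cL^\Gamma)^\Sigma=\cL$ is the original product, the translation becomes $X_{\p}\ot X_{\m}$ of $\cL^\Gamma$, and the inverse cocycle $\Sigma^{-1}$ becomes $\Gamma$ by the first part. The formula therefore reads
\[\Gamma(X\o,Y\o)\,X\t\,Y\t=X_{\p}\CG Y_{\p}\CG\overline{\Sigma(Y_{\m},X_{\m})}=X_{\p}\CG Y_{\p}\CG\overline{\Gamma^{-1}(Y_{\m},X_{\m})},\]
which is exactly the claimed identity.

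I expect the genuine obstacle to be the uniqueness step in the first part, i.e. justifying that the left-inverse relation pins $\Sigma^{-1}$ down uniquely, equivalently that a left $2$-cocycle is determined by its coherence map $\GH$. The natural route is to recover the cocycle from $\GH$ on the regular comodule $(\cL,\Delta)$ by composing with the counit, but since $\Gamma$ is only left $\overline{B}$-linear the absorption of the counit against the second tensor leg is delicate when $B$ is noncommutative; this is the one non-formal ingredient, the remainder being careful bookkeeping of the $(\cL,\Gamma)\leftrightarrow(\cL^\Gamma,\Sigma)$ dictionary together with $(\cL^\Gamma)^\Sigma=\cL$.
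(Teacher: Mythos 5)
Your handling of the second identity is exactly the paper's: once $(\Gamma^{-1})^{-1}=\Gamma$ is in hand, the displayed formula is the second equality of Corollary~\ref{cor. comodule twist back} transcribed to the pair $(\cL^{\Gamma},\Sigma)$ via $(\cL^{\Gamma})^{\Sigma}=\cL$, and that is all the paper says about it. So everything hinges on the first claim.

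There, your argument has a genuine gap. You define $\Sigma^{-1}:=(\Gamma^{-1})^{-1}$ by the explicit formula of Proposition~\ref{prop. left inverse} applied to $(\cL^{\Gamma},\Sigma)$, note that both $\Sigma^{-1}$ and $\Gamma$ satisfy
\[\Sigma\bigl(\Theta(X\o,Y\o),\ \Sigma(X\t\ro,Y\t\ro)\CG X\t\rt\CG Y\t\rt\bigr)=\varepsilon^{\Gamma}(X\CG Y)\]
in the role of $\Theta$, and conclude ``since $\Sigma$ is invertible this inverse is unique.'' But invertibility of $\Sigma$ means invertibility of the coherence maps $\Sigma^{\#}$; it does not by itself say that the scalar relation above has at most one solution $\Theta$. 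The value $\Theta(X\o,Y\o)$ sits inside a balanced tensor product entangled with the legs $X\t,Y\t$, so two solutions need not even agree as elements of $B\ot\cL\ot\cL$ before $\Sigma$ is applied, and one cannot simply ``cancel $\Sigma$'' on the left. The fallback you sketch --- recover a cocycle from its coherence map on the regular comodule by composing with the counit --- hits precisely the obstruction you name: although cocycle axiom (1) with $Z=1_{\cL}$ does give $\Gamma(X,Y)=\Gamma(X\o,Y\o)\varepsilon(X\t Y\t)$, the assignment $Z\ot W\mapsto\varepsilon(ZW)$ is not well defined on $\cL\ot_{B}\cL$ for noncommutative $B$ (one would need $\varepsilon(t(b)ZW)=\varepsilon(Zs(b)W)$), so equality of coherence maps does not obviously transport through it. In short, the uniqueness step is the entire content of $(\Gamma^{-1})^{-1}=\Gamma$, and it is left unproved. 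The paper sidesteps it altogether: it expands $(\Gamma^{-1})^{-1}(X,Y)=\Gamma^{-1}\bigl(X_{\p}\CG Y_{\p},\ \Gamma^{-1}(Y_{\m}\ro,X_{\m}\ro)\CG Y_{\m}\rt\CG X_{\m}\rt\bigr)$ directly, using Corollary~\ref{cor. comodule twist back} and the identities \eqref{equ. inverse lamda 1}--\eqref{equ. inverse lamda 10}, until it collapses to $\Gamma(X,Y)$. To make your route work you would have to prove a genuine uniqueness lemma (e.g.\ that an invertible left 2-cocycle is determined by its coherence maps); otherwise the direct computation is needed.
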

\begin{proof}
 As $\Gamma^{-1}$ is an invertible left 2-cocycle on $\cL^{\Gamma}$ by Theorem \ref{thm. invertible 2 cocycle}, we have for all $X, Y\in \cL$,
\begin{align*}
    (&\Gamma^{-1})^{-1}(X, Y)\\
    =&\Gamma^{-1}(X_{\p}\CG Y_{\p}, \Gamma^{-1}(Y_{\m}\ro, X_{\m}\ro)\CG\,Y_{\m}\rt\CG X_{\m}\rt)\\
    =&\Gamma^{-1}(\Gamma(X_{\p +}\o, Y_{\p +}\o)X_{\p +}\t Y_{\p +}\t\,\overline{\Gamma(Y_{\p -},X_{\p -})}, Y_{\m +}\, X_{\m +}\overline{\Gamma(X_{\m -}\,, Y_{\m -})})\\
     =&\Gamma(\Gamma(X_{\p +}\o, Y_{\p +}\o)X_{\p +}\t{}_{+} Y_{\p +}\t{}_{+}\,Y_{\m ++}X_{\m ++},\\
     &\Gamma(\Gamma(X_{\m -}\, ,Y_{\m -})X_{\m +-}\o\,Y_{\m +-}\o, \Gamma(Y_{\p -},X_{\p -})Y_{\p +}\t{}_{-}\o\,X_{\p +}\t{}_{-}\o)\\
     &X_{\m +-}\t\,Y_{\m +-}\t\,Y_{\p +}\t{}_{-}\t\,X_{\p +}\t{}_{-}\t)\\
   =&\Gamma(\Gamma(X_{\p ++}\o, Y_{\p ++}\o)X_{\p ++}\t Y_{\p ++}\t\,Y_{\m ++}X_{\m ++},\\
     &\Gamma(\Gamma(X_{\m -},\, Y_{\m -})X_{\m +-}\o\,Y_{\m +-}\o, \Gamma(Y_{\p -},X_{\p -})Y_{\p +-}\o\,X_{\p +-}\o)\,
     X_{\m +-}\t\,Y_{\m +-}\t\,Y_{\p +-}\t\,X_{\p +-}\t)\\
      =&\Gamma(\Gamma(X_{\p +}\o, Y_{\p +}\o)X_{\p +}\t Y_{\p +}\t\,Y_{\m +}X_{\m +},\\
     &\Gamma(\Gamma(X_{\m -}\o\,, Y_{\m -}\o)X_{\m -}\t\,Y_{\m -}\t, \Gamma(Y_{\p -}\o,X_{\p -}\o)Y_{\p -}\t\,X_{\p -}\t)\,
     X_{\m -}\th\,Y_{\m -}\th\,Y_{\p -}\th\,X_{\p -}\th)\\
        =&\Gamma(\Gamma(X_{\p +}\o,\,Y_{\p +}\o)X_{\p +}\t\,Y_{\p +}\t
\,Y{}_{\m +}\, X{}_{\m +}, \\
        &\Gamma(X{}_{\m -}\o, \Gamma(Y{}_{\m -}\o, \Gamma(Y_{\p -}\o,\,X_{\p -}\o)Y_{\p -}\t\,X_{\p -}\t) Y{}_{\m -}\t\,Y_{\p -}\th\,X_{\p -}\th )X{}_{\m -}\t\, Y{}_{\m -}\th\,Y_{\p -}\fo\,X_{\p -}\fo)\\
        =&\Gamma(\Gamma(X_{\p +}\o,\,Y_{\p +}\o)X_{\p +}\t\,Y_{\p +}\t
\,Y{}_{\m +}\, X{}_{\m +}, \\
        &\Gamma(X{}_{\m -}\o, \Gamma(\Gamma(Y{}_{\m -}\o,Y_{\p -}\o)Y{}_{\m -}\t\,Y_{\p -}\t, \,X_{\p -}\o) Y{}_{\m -}\th\,Y_{\p -}\th\,X_{\p -}\t )X{}_{\m -}\t\, Y{}_{\m -}\fo\,Y_{\p -}\fo\,X_{\p -}\th)\\    =&\Gamma(\Gamma(X_{\p ++}\o,\,Y_{\p ++}\o)X_{\p ++}\t\,Y_{\p ++}\t
\,Y{}_{\m ++}\, X{}_{\m ++}, \\
&\Gamma(X{}_{\m -}, \Gamma(\Gamma(Y{}_{\m -},Y_{\p -})Y{}_{\m +-}\o\,Y_{\p +-}\o, \,X_{\p -}) Y{}_{\m +-}\t\,Y_{\p +-}\t\,X_{\p +-}\o )X{}_{\m +-}\, Y{}_{\m +-}\th\,Y_{\p +-}\th\,X_{\p +-}\t)\\
       =&\Gamma(\Gamma(X_{\p ++}\o,\,Y_{++}\o)X_{\p ++}\t\,Y_{++}\t
\,Y{}_{-+}\, X{}_{\m ++}, \\
        &\Gamma(X{}_{\m -}, \Gamma(Y{}_{--}\o\,Y_{+-}\o, \,X_{\p -}) Y{}_{--}\t\,Y_{ +-}\t\,X_{\p +-}\o )X{}_{\m +-}\, Y{}_{--}\th\,Y_{ +-}\th\,X_{\p +-}\t)\\
        =&\Gamma(\Gamma(X_{\p ++}\o\,,Y_{+}\o)X_{\p ++}\t\,Y_{+}\t
\,Y{}_{-}\, X{}_{\m ++}, \Gamma(X{}_{\m -}, \Gamma(1, \,X_{\p -}) X_{\p +-}\o )X{}_{\m +-}\, X_{\p +-}\t)\\
=&\Gamma(\Gamma(X_{\p ++}\o,\,Y)X_{\p ++}\t\, X{}_{\m ++}, \Gamma(X{}_{\m -},  X_{\p -} )X{}_{\m +-}\, X_{\p +-})\\
=&\Gamma(\Gamma(X_{ ++}\o,\,Y)X_{++}\t\, X{}_{-+}, X{}_{--}\, X_{+-})\\
=&\Gamma(\Gamma(X_{ +}\o,\,Y)X_{+}\t\, X{}_{-}, 1)\\
=&\Gamma(\Gamma(X,Y), 1)=\Gamma(X,Y),
\end{align*}
    where the 2nd step uses Corollary \ref{cor. comodule twist back}. The stated formula is a direct result of the second equality in Corollary \ref{cor. comodule twist back} by exchanging $\Sigma$ and $\Gamma$.
\end{proof}

In summary, we see although there is in general no convolution inverse for an invertible left 2-cocycle $\Gamma$ in our sense, there is an invertible left 2-cocycle $\Gamma^{-1}$, such that
\begin{align*}
    \Gamma(\Gamma^{-1}(X\ro, Y\ro), \Gamma(X\rt\o\, ,Y\rt\o)X\rt\t\, Y\rt\t)=&\varepsilon(XY),\\
    \Gamma^{-1}(\Gamma(X\o, Y\o), \Gamma^{-1}(X\t\ro\,, Y\t\ro)\CG\,X\t\rt\,\CG\, Y\t\rt)=&\varepsilon^{\Gamma}(X\CG\,Y),
\end{align*}
    for all $X, Y\in \cL$. This motivates us to define the following composition of two 2-cocycles.

\begin{lemma}\label{compos}
    Let $\cL$ be a left $B$-Hopf algebroid, $\Gamma$ an invertible left 2-cocycle on $\cL$ and $\Sigma$ an invertible left 2-cocycle on $\cL^{\Gamma}$. Then $\Sigma\circ \Gamma:\cL\ot_{\BB}\cL\to B$ given by
    \begin{align*}
        \Sigma\circ \Gamma(X, Y)=&\Gamma(\Sigma(X\ro, Y\ro)\CG X\rt, Y\rt)
    \end{align*}
    is an invertible left 2-cocycle on $\CL$. Moreover, $(\Sigma\circ \Gamma)^{\#}=\GH\circ \Sigma^{\#}$.
\end{lemma}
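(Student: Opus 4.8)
The plan is to treat the identity $(\Sigma\circ\Gamma)^{\#}=\GH\circ\Sigma^{\#}$ as the central claim: once it is established, invertibility of $\Sigma\circ\Gamma$ is immediate, since $\GH=\Gamma^{\#}$ is invertible (as $\Gamma$ is an invertible $2$-cocycle) and $\Sigma^{\#}$ is invertible (as $\Sigma$ is one on $\cL^{\Gamma}$), so their composite is invertible. Conceptually this reflects that $\Sigma\circ\Gamma$ is the $2$-cocycle on $\cL$ whose cotwist realises the iterated cotwist $(\cL^{\Gamma})^{\Sigma}$: the monoidal equivalence $F_{\Sigma\circ\Gamma}:({}^{\cL}\CM,\ot_{B})\to({}^{(\cL^{\Gamma})^{\Sigma}}\CM,\ot_{B'})$ should be the composite of the two equivalences of Lemma~\ref{lem. comodule equ}, whose coherence map on underlying spaces is $\GH\circ\Sigma^{\#}$. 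Before anything else I would record that the base deforms correctly, $B^{\Sigma\circ\Gamma}=(B^{\Gamma})^{\Sigma}=:B'$, by evaluating $\Sigma\circ\Gamma$ on $s(a),s(b)$ and using $\Delta(s(a))=s(a)\ot 1$, and that $\Sigma\circ\Gamma$ factors through $\ot_{\BB}$ and is left $\overline{B}$-linear, which follows from the $\overline{B}$-linearity of $\Gamma$ and the defining formula.

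For the $\#$-identity I would compute $\GH\circ\Sigma^{\#}(m\ot n)$ for $m\in M$, $n\in N$ in ${}^{\cL}\CM\cong{}^{\cL^{\Gamma}}\CM$. Expanding $\Sigma^{\#}(m\ot n)=\Sigma(m\rmo,n\rmo)\CG m\rz\ot_{\BG}n\rz$ (with $m\rmo\ot m\rz=\delta^{\cL^{\Gamma}}(m)$) and using left $\BG$-linearity of $\GH$ from Lemma~\ref{lem. coherent map is BG-bilinear}, I pull the scalar $\Sigma(m\rmo,n\rmo)$ outside, apply $\GH(m\rz\ot n\rz)=\Gamma((m\rz)\mo,(n\rz)\mo)(m\rz)\z\ot_{B}(n\rz)\z$, and evaluate the $\cdot_{\Gamma}$-action on the codiagonal comodule $M\ot_{B}N$ via Proposition~\ref{prop. twisted bimodule structures}. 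Collecting Sweedler legs and using coassociativity gives
\[\GH\circ\Sigma^{\#}(m\ot n)=\Gamma\big(\Sigma(m\rmo,n\rmo),\,\Gamma((m\rz)\mt,(n\rz)\mt)\,(m\rz)\mo\,(n\rz)\mo\big)\,(m\rz)\z\ot_{B}(n\rz)\z.\]
The crucial step is to convert the mixed $\delta^{\cL^{\Gamma}}$/$\delta^{\cL}$ legs into pure $\delta^{\cL}$ legs: iterating the compatibility $(\Delta^{\Gamma}\ot\id)\circ\delta^{\cL}=(\id\ot\delta^{\cL})\circ\delta^{\cL^{\Gamma}}$ of Corollary~\ref{cor. cocommute coaction} gives $m\rmo\ot(m\rz)\mt\ot(m\rz)\mo\ot(m\rz)\z=(m\mo)\ro\ot(m\mo)\rt\ot(m\z)\mo\ot(m\z)\z$, and likewise for $n$. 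Substituting, then applying the $2$-cocycle condition of $\Gamma$ (Definition~\ref{Lcotwist}(1)) together with the unfolding $\Sigma(\ldots)\CG(m\mo)\rt=\Gamma(\Sigma(\ldots),(m\mo)\rt\o)(m\mo)\rt\t$, collapses everything to $(\Sigma\circ\Gamma)(m\mo,n\mo)\,m\z\ot_{B}n\z=(\Sigma\circ\Gamma)^{\#}(m\ot n)$. It is in fact enough to run this on the regular comodule $M=N=\cL$ (where $\delta^{\cL}=\Delta$ and $\delta^{\cL^{\Gamma}}=\Delta^{\Gamma}$) and invoke naturality, which avoids carrying a general $M,N$.

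It remains to verify the axioms of a left $2$-cocycle for $\Sigma\circ\Gamma$. The normalisation $(\Sigma\circ\Gamma)(1,X)=\varepsilon(X)=(\Sigma\circ\Gamma)(X,1)$ follows quickly from $\Delta^{\Gamma}(1_{\cL})=1_{\cL}\di 1_{\cL}$, the counit laws $\Sigma(1,-)=\varepsilon^{\Gamma}=\Sigma(-,1)$ and $\Gamma(1,-)=\varepsilon=\Gamma(-,1)$, and counitality of $\Delta^{\Gamma}$. For the cocycle condition I would proceed as in Lemma~\ref{lem. sigma is 2 cocycle} and Proposition~\ref{prop. twist forward}: expand both sides of Definition~\ref{Lcotwist}(1) for $\Sigma\circ\Gamma$, push the inner $\Sigma\circ\Gamma$ through $\Delta$ using that $\Delta$ is $\BG$-bilinear and cocommutes with $\Delta^{\Gamma}$ (Proposition~\ref{prop. cocomute coaction}(1)), and reduce to the cocycle conditions of $\Gamma$ and of $\Sigma$ applied in sequence; alternatively, since $\GH$ and $\Sigma^{\#}$ are each coherent (the Proposition following Lemma~\ref{lem. coherent map is BG-bilinear}), their composite $\GH\circ\Sigma^{\#}=(\Sigma\circ\Gamma)^{\#}$ is coherent, and evaluating the coherence hexagon on $\cL\ot\cL\ot\cL$ and contracting with $\varepsilon$ returns exactly the cocycle identity. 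The main obstacle throughout is bookkeeping: one must keep the two coactions $\delta^{\cL},\delta^{\cL^{\Gamma}}$ and the two bases $B,\BG$ strictly separate, repeatedly using Corollary~\ref{cor. cocommute coaction} and Proposition~\ref{prop. cocomute coaction} to interconvert so that every $\Gamma$ and $\Sigma$ receives arguments in the correct algebra. The cocycle-condition computation is the longest and most error-prone part, and the coherence route is the safer way to obtain it.
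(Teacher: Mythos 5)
Your overall strategy coincides with the paper's. The identity $(\Sigma\circ\Gamma)^{\#}=\GH\circ\Sigma^{\#}$ is established there by exactly the computation you describe (the paper runs it from $(\Sigma\circ\Gamma)^{\#}(m\ot n)$ towards $\GH\circ\Sigma^{\#}(m\ot n)$, with Corollary~\ref{cor. cocommute coaction} doing the leg conversion); invertibility is then read off from the invertibility of $\GH$ and $\Sigma^{\#}$; and your first route for the cocycle condition --- expand both sides, push the inner $\Sigma\circ\Gamma$ through $\Delta$ using Proposition~\ref{prop. cocomute coaction} and the identity $\Gamma(X\o,Y\o)X\t Y\t=X_{\p}\CG Y_{\p}\CG\overline{\Gamma^{-1}(Y_{\m},X_{\m})}$ of Proposition~\ref{prop. twist forward}, then invoke the cocycle conditions of $\Gamma$ and $\Sigma$ in sequence --- is precisely the paper's argument.

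The one place I would not follow you is the claim that the ``coherence route'' is the safer way to obtain the cocycle condition. Evaluating the coherence hexagon for $\Theta=\Sigma\circ\Gamma$ on $\cL^{\ot 3}$ gives
\[
\Theta\big(\Theta(X\o,Y\o)X\t Y\t,\,Z\o\big)\,X\th\di_{B} Y\th\di_{B} Z\t
\;=\;
\Theta\big(X\o,\,\Theta(Y\o,Z\o)Y\t Z\t\big)\,X\t\di_{B} Y\th\di_{B} Z\th ,
\]
which is the cocycle identity \emph{with three residual Sweedler legs attached}. ``Contracting with $\varepsilon$'' produces factors $\varepsilon(X\th)$, $\varepsilon(Y\th)$, $\varepsilon(Z\t)$ multiplying the coefficient in $B$, \emph{outside} the arguments of $\Theta$; reabsorbing them into $X\t$, $Y\t$, $Z\o$ would require right $B$-linearity properties that $\Theta$ does not have (it is only left $\overline{B}$-linear and $\overline{B}$-balanced), and over a noncommutative base the naive contractions such as $\id\di\varepsilon$ are not even well defined on $\di_{B}$. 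So coherence of $\GH$ and of $\Sigma^{\#}$ should be treated as a \emph{consequence} of the respective cocycle conditions, not as a device for deducing the cocycle condition of the composite: you do have to carry out the direct expansion, which is what the paper does. The same caution applies, more mildly, to checking the $\#$-identity only on the regular comodule ``by naturality'' --- the general-comodule computation is no harder and avoids having to make that reduction precise.
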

\begin{proof}
    It is not hard to see $\Sigma\circ \Gamma$ is well defined and  factors through $\ot_{\overline{B}}$, and one can check directly it is left $\overline{B}$-linear. We show that is is a left 2-cocycle. First, we observe that
    \begin{align*}
        \Gamma(\Sigma(X\o\ro,& Y\o\ro)\CG X\o\rt, Y\o\rt)X\t Y\t\\
        =& \Gamma(\Sigma(X\ro, Y\ro)\CG X\rt\o, Y\rt\o)X\rt\t Y\rt\t\\
        =& \Gamma(\Sigma(X\ro, Y\ro), \Gamma( X\rt\o, Y\rt\o)X\rt\t Y\rt\t)X\rt\th Y\rt\th\\
        =&\Sigma(X\ro, Y\ro)\CG (\Gamma( X\rt\o, Y\rt\o)X\rt\t Y\rt\t)\\
        =&\Sigma(X\ro, Y\ro)\CG (X\rt{}_{\p}\CG\,Y\rt{}_{\p}\,\CG\overline{\Gamma^{-1}(Y\rt{}_{\m},X\rt{}_{\m})} ),
    \end{align*}
    where the last step uses Proposition \ref{prop. twist forward}.   Then on the one hand,
    \begin{align*}
        \Sigma&\circ \Gamma(\Sigma\circ \Gamma(X\o, Y\o)X\t Y\t, Z)\\
        =&\Sigma\circ \Gamma(\Gamma(\Sigma(X\o\ro, Y\o\ro)\CG X\o\rt, Y\o\rt)X\t Y\t, Z)\\
         =&\Sigma\circ \Gamma(\Sigma(X\ro, Y\ro)\CG (X\rt{}_{\p}\CG\,Y\rt{}_{\p}\,\CG\overline{\Gamma^{-1}(Y\rt{}_{\m},X\rt{}_{\m})} ), Z)\\
         =&\Gamma(\Sigma(\big(\Sigma(X\ro, Y\ro)\CG (X\rt{}_{\p}\CG\,Y\rt{}_{\p}\,\CG\overline{\Gamma^{-1}(Y\rt{}_{\m},X\rt{}_{\m})} )\big)\ro, \\
         &\quad Z\ro)\CG \big(\Sigma(X\ro, Y\ro)\CG (X\rt{}_{\p}\CG\,Y\rt{}_{\p}\,\CG\overline{\Gamma^{-1}(Y\rt{}_{\m},X\rt{}_{\m})} )\big)\rt, Z\rt)\\
         =&\Gamma(\Sigma\big(\Sigma(X\ro, Y\ro)\CG X\rt{}_{\p}\ro\CG\,Y\rt{}_{\p}\ro\, ,
         Z\ro\big)\CG \big(X\rt{}_{\p}\rt\CG\,Y\rt{}_{\p}\rt\,\CG\overline{\Gamma^{-1}(Y\rt{}_{\m},X\rt{}_{\m})} \big), Z\rt)\\
         =&\Gamma(\Sigma\big(\Sigma(X\ro, Y\ro)\CG X\rt\CG\,Y\rt\, ,
         Z\ro\big)\CG \big(X\rth{}_{\p}\CG\,Y\rth{}_{\p}\,\CG\overline{\Gamma^{-1}(Y\rth{}_{\m},X\rth{}_{\m})} \big), Z\rt)\\
         =&\Gamma(\Sigma\big(\Sigma(X\ro, Y\ro)\CG X\rt\CG\,Y\rt\, ,
         Z\ro\big)\CG \big(\Gamma(X\rth\o, Y\rth\o)X\rth\t\, Y\rth\t\big), Z\rt)\\
         =&\Gamma(\Sigma(\Sigma(X\ro, Y\ro)\CG X\rt\CG\,Y\rt\, ,
         Z\ro), \Gamma( \Gamma(X\rth\o, Y\rth\o)X\rth\t\, Y\rth\t, Z\rt\o)X\rth\th\, Y\rth\th\,Z\rt\t).
    \end{align*}
   On the other hand, by a similar method, one has 
    \begin{align*}
         \Sigma&\circ \Gamma(X, \Sigma\circ \Gamma(Y\o, Z\o)Y\t Z\t)\\
         =&\Gamma(\Sigma(         X\ro,\Sigma(Y\ro, Z\ro)\CG Y\rt\CG\,Z\rt\,), \Gamma(X\rt\o, \Gamma(Y\rth\o,Z\rth\o) Y\rth\t\,Z\rth\t)X\rt\t\, Y\rth\th\,Z\rth\th).
    \end{align*}
    The two are equal since $\Gamma$ and $\Sigma$ are left 2-cocycles. Finally,
   have for all $m\in M\in {}^{\cL}\CM$ and $n\in N\in {}^{\cL}\CM$ that 
    \begin{align*}
         (\Sigma\circ \Gamma)^{\#}(m\ot n)=&\Sigma\circ \Gamma(m\mo, n\mo)m\z\ot n\z\\
         =&\Gamma(\Sigma(m\mo\ro, n\mo\ro)\CG m\mo\rt, n\mo\rt)m\z\ot n\z\\
         =&\Gamma(\Sigma(m\rmo, n\rmo)\CG m\rz\mo, n\rz\mo)m\rz\z\ot n\rz\z\\
         =&\GH(\Sigma(m\rmo, n\rmo)\CG m\rz\ot n\rz) =\GH\circ \Sigma^{\#}(m\ot n),
    \end{align*}
    where the 3rd step uses Corollary \ref{cor. cocommute coaction}. Hence, $\Sigma\circ \Gamma$ is an invertible left 2-cocycle since $\Sigma$ and $\Gamma$ are invertible.
    \end{proof}
    \begin{proposition}
        Let $\cL$ be a left $B$-Hopf algebroid, $\Gamma$ an invertible 2-cocycle on $\cL$ and $\Sigma$ an invertible 2-cocycle on $\cL^{\Gamma}$. Then $\cL^{\Sigma\circ \Gamma}=(\cL^{\Gamma})^{\Sigma}$.
    \end{proposition}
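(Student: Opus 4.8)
The three structures $\cL^{\Sigma\circ\Gamma}$ and $(\cL^{\Gamma})^{\Sigma}$ are both carried by the underlying vector space $\cL$, so the plan is to show that the identity map of $\cL$ matches their four pieces of data: the base algebra, the ring product, the coproduct and the counit. The left Hopf algebroid (translation) maps are then automatically equal, since $X_{+}\ot X_{-}=\lambda^{-1}(X\di_{B}1)$ is determined by the product and coproduct alone.

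First I would settle the base and the coproduct, which are formal. Using $s(a)_{+}\ot s(a)_{-}=s(a)\ot 1$ and the normalisation $\Gamma(X,1)=\varepsilon(X)$ one checks $\GH(s(a)\di_{\BG}1)=\Delta(s(a))=s(a)\di_{B}1$, so $\DG(s(a))=s(a)\di_{\BG}1$; that is, the source is grouplike for $\DG$. Inserting this into the definition of $\Sigma\circ\Gamma$ from Lemma~\ref{compos} gives, for $a,b\in B$,
\[(\Sigma\circ\Gamma)(a,b)=\Gamma(\Sigma(a,b)\CG 1,\,1)=\varepsilon(s(\Sigma(a,b)))=\Sigma(a,b),\]
so $B^{\Sigma\circ\Gamma}=(\BG)^{\Sigma}$. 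For the coproduct, recall that the cotwist coproducts are $\DG=\GH^{-1}\circ\Delta$ and $(\DG)^{\Sigma}=(\Sigma^{\#})^{-1}\circ\DG$, where $\Sigma^{\#}$ is the coherence map of the $\Sigma$-cotwist of $\cL^{\Gamma}$. By Lemma~\ref{compos}, $(\Sigma\circ\Gamma)^{\#}=\GH\circ\Sigma^{\#}$, hence
\[(\DG)^{\Sigma}=(\Sigma^{\#})^{-1}\circ\GH^{-1}\circ\Delta=((\Sigma\circ\Gamma)^{\#})^{-1}\circ\Delta=\Delta^{\Sigma\circ\Gamma}.\]
Since base and coproduct now agree and the counit of a coring is uniquely determined by its comultiplication, the two counits $(\Sigma\circ\Gamma)(X_{+},X_{-})$ and $\Sigma(X_{\p},X_{\m})$ must coincide; alternatively this can be verified directly as in Theorem~\ref{thm. invertible 2 cocycle}.

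The one substantial point is the equality of the ring products. Applying the twisted-product formula (\ref{twistprod}) to the cotwist of $\cL^{\Gamma}$ by $\Sigma$ gives
\[X\cdot_{\Sigma}Y=\Sigma(X\ro,Y\ro)\CG (X\rt)_{\p}\CG (Y\rt)_{\p}\CG\overline{\Sigma((Y\rt)_{\m},(X\rt)_{\m})},\]
where $\ro,\rt$ denote $\DG$, the symbols $\p,\m$ are the translation map of $\cL^{\Gamma}$, and $\CG$ is $\cdot_{\Gamma}$. The plan is to transform this into $X\cdot_{\Sigma\circ\Gamma}Y=(\Sigma\circ\Gamma)(X\o,Y\o)\,X\t{}_{+}Y\t{}_{+}\,\overline{(\Sigma\circ\Gamma)(Y\t{}_{-},X\t{}_{-})}$ by the same bookkeeping as in the product computation of Theorem~\ref{thm. invertible 2 cocycle}: repeatedly rewrite the $\cL^{\Gamma}$-coproducts and $\cL^{\Gamma}$-translation maps in terms of the original $\cL$-data via the cocommutativity relations of Proposition~\ref{prop. cocomute coaction} and the identity of Proposition~\ref{prop. twist forward}, and then collapse the nested cocycle evaluations using the 2-cocycle (associativity) conditions for both $\Gamma$ and $\Sigma$ together with $X=X_{+}\overline{\varepsilon(X_{-})}$ from (\ref{equ. inverse lamda 7}). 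The definition of $\Sigma\circ\Gamma$ in Lemma~\ref{compos} is exactly what is needed to recognise the final expression.

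I expect this product identity to be the main obstacle. Unlike the coproduct it is not a formal consequence of the composition law $(\Sigma\circ\Gamma)^{\#}=\GH\circ\Sigma^{\#}$, and the computation must juggle simultaneously the two distinct coproducts $\Delta,\DG$ and the two distinct translation maps $(+,-)$ and $(\p,\m)$, which interact only through the compatibility statements of Proposition~\ref{prop. cocomute coaction}; keeping the Sweedler indices straight through the nested applications of the cocycle conditions is where the real work lies. Conceptually, however, the whole proposition expresses that iterating cotwists corresponds to composing the monoidal coherence functors of Lemma~\ref{lem. comodule equ}, so once the coherence maps are known to compose (Lemma~\ref{compos}) the transported product, coproduct and counit are forced to agree; the computation above is just the concrete incarnation of this functoriality.
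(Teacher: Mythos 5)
Your treatment of the base algebra and of the coproduct coincides with the paper's: the chain $(\DG)^{\Sigma}=(\Sigma^{\#})^{-1}\circ\GH^{-1}\circ\Delta=((\Sigma\circ\Gamma)^{\#})^{-1}\circ\Delta=\Delta^{\Sigma\circ\Gamma}$ via $(\Sigma\circ\Gamma)^{\#}=\GH\circ\Sigma^{\#}$ is exactly the argument given there. Your counit step is a legitimate shortcut the paper does not take: it verifies $\varepsilon^{\Sigma\circ\Gamma}(X)=\Sigma(X_{\p},X_{\m})$ by a direct computation using Proposition~\ref{prop. cocomute coaction}, whereas you invoke uniqueness of the counit of a coring; just note that this makes the counit logically dependent on first settling the product, since the bimodule structure entering the coring axioms is built from multiplication by $s$ and $t$. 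The real divergence is the product, and there you have a plan rather than a proof: you propose to expand $X\cdot_{\Sigma}Y$ in Sweedler notation and collapse the nested cocycles using Propositions~\ref{prop. cocomute coaction} and~\ref{prop. twist forward}. That route should work (it is the style of computation carried out in Theorem~\ref{thm. invertible 2 cocycle}), but the paper avoids it by a factorisation argument: it writes the cotwisted product as $m_{\cL^{\Gamma}}=m_{\cL}\circ\GH_{L}\circ\GH_{R}$, where $\GH_{L}(X\ot Y)=\Gamma(X\o,Y\o)X\t\ot Y\t$ acts on the coproduct legs and $\GH_{R}(X\ot Y)=X_{+}\ot Y_{+}\overline{\Gamma(Y_{-},X_{-})}$ on the translation-map legs, proves the single commutation relation $\GH_{R}\circ\Sigma^{\#}{}_{L}=\Sigma^{\#}{}_{L}\circ\GH_{R}$ (a short consequence of Proposition~\ref{prop. cocomute coaction}), and then concludes from $(\Sigma\circ\Gamma)^{\#}{}_{L/R}=\Gamma^{\#}{}_{L/R}\circ\Sigma^{\#}{}_{L/R}$ that $m_{(\cL^{\Gamma})^{\Sigma}}=m_{\cL}\circ\Gamma^{\#}{}_{L}\circ\Sigma^{\#}{}_{L}\circ\Gamma^{\#}{}_{R}\circ\Sigma^{\#}{}_{R}=m_{\cL^{\Sigma\circ\Gamma}}$. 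This packages all the Sweedler bookkeeping you were worried about into one lemma. As it stands your proposal leaves the hardest step unexecuted; either carry out the nested-cocycle collapse in full or, better, adopt the factorisation.
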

    \begin{proof}
        We have $B^{\Sigma\circ \Gamma}=B^{\Sigma}$ since $b\cdot_{\Sigma\circ \Gamma}b'=\Gamma(\Sigma(b,b'),1)=\Sigma(b,b')$. Now let $B^{i}=B, B^{\Gamma}$ or $B^{\Sigma}$ and define 
        \[ \GH_{L}:\cL\ot_{\BG\ot \overline{B^{i}}}\cL\to \cL\ot_{B\ot \overline{B^{i}}}\cL,\quad \GH_{L}(X\ot Y)=\Gamma(X\o, Y\o)X\t\ot Y\t,\]
        \[  \GH_{R}:\cL\ot_{B^{i}\ot \overline{\BG}}\cL\to \cL\ot_{B^i\ot \overline{B}}\cL,\quad \GH_{R}(X\ot Y)=X_{+}\ot Y_{+}\overline{\Gamma(Y_{-},X_{-})}.\]
     One can check that $m_{\cL^{\Gamma}}=m_{\cL}\circ \GH_{L}\circ \GH_{R}=m_{\cL}\circ \GH_{R}\circ \GH_{L}$. Moreover,  $\GH{}_{R}\circ \Sigma^{\#}{}_{L}=\Sigma^{\#}{}_{L}\circ \GH{}_{R}:\cL_{B^{\Sigma}\ot\overline{B^{\Gamma}}}\to \cL_{B^{\Gamma}\ot\overline{B}}$ since
       \begin{align*}
           \Sigma^{\#}{}_{L}\circ \GH{}_{R}(X\ot Y)=& \Sigma^{\#}{}_{L}(X_{+}\ot Y_{+}\overline{\Gamma(Y_{-}, X_{-})})\\
           =&\Sigma(X_{+}\ro, Y_{+}\ro)\CG X_{+}\rt\ot Y_{+}\rt\overline{\Gamma(Y_{-}, X_{-})}\\
           =&\Sigma(X\ro, Y\ro)\CG X\rt{}_{+}\ot Y\rt{}_{+}\overline{\Gamma(Y\rt{}_{-}, X\rt{}_{-})}\\
           =&\Gamma^{\#}{}_{R}(\Sigma(X\ro, Y\ro)\CG X\rt\ot Y\rt)=\GH{}_{R}\circ \Sigma^{\#}{}_{L}(X\ot Y),
       \end{align*}
       where the 3rd step uses Proposition \ref{prop. cocomute coaction}. As a result,
\begin{align*}
    m_{(\cL^{\Gamma})^{\Sigma}}=&m_{\cL^{\Gamma}}\circ \Sigma^{\#}{}_{L}\circ \Sigma^{\#}{}_{R}=m_{\cL}\circ \Gamma^{\#}{}_{L}\circ \Gamma^{\#}{}_{R}\circ \Sigma^{\#}{}_{L}\circ \Sigma^{\#}{}_{R}\\
    =&m_{\cL}\circ \Gamma^{\#}{}_{L}\circ \Sigma^{\#}{}_{L}\circ \Gamma^{\#}{}_{R}\circ \Sigma^{\#}{}_{R}= m_{\cL}\circ (\Sigma\circ\Gamma)^{\#}{}_{L}\circ (\Sigma\circ\Gamma)^{\#}{}_{R}=m_{\cL^{(\Sigma\circ\Gamma)}},
\end{align*}
       where the last step uses $(\Sigma\circ\Gamma)^{\#}{}_{L/R}=\Gamma^{\#}{}_{L/R}\circ \Sigma^{\#}{}_{L/R}$ (the proof of which is similar to  that of $(\Sigma\circ\Gamma)^{\#}=\Gamma^{\#}\circ \Sigma^{\#}$). Clearly, $\Delta^{\Sigma\circ \Gamma}=(\Sigma\circ \Gamma)^{\# -1}\circ \Delta=\Sigma^{\# -1}\circ \GH^{-1}\circ \Delta=\Sigma^{\# -1}\circ \Delta^{\Gamma}=(\Delta^{\Gamma})^{\Sigma}$. Finally, we have
       \begin{align*}
           \varepsilon^{\Sigma\circ \Gamma}(X)=&\Sigma\circ \Gamma(X_{+}, X_{-})
           =\Gamma(\Sigma(X_{+}\ro, X_{-}\ro)\CG X_{+}\rt, X_{-}\rt)\\
           =&\Gamma(\Sigma(X_{\p +}\ro, X_{\m})\CG X_{\p +}\rt, X_{\p -})
           =\Gamma(\Sigma(X_{\p}\ro, X_{\m})\CG X_{\p}\rt{}_{+}, X_{\p}\rt{}_{-})\\
           =&\varepsilon^{\Gamma}(\Sigma(X_{\p}\ro, X_{\m})\CG X_{\p}\rt)
           =\Sigma(X_{\p}\ro, X_{\m})\CG\varepsilon^{\Gamma}( X_{\p}\rt)=\Sigma(X_{\p}, X_{\m})=(\varepsilon^{\Gamma})^{\Sigma}(X),
       \end{align*}
        where the 3rd and 4th steps use Proposition \ref{prop. cocomute coaction}.
    \end{proof}

Putting all this together, we arrive at our main theorem of the section:

\begin{theorem}
    The collection of left Hopf algebroids with invertible 2-cocycles with composition as in Lemma~\ref{compos} form a groupoid.
\end{theorem}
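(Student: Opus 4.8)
The objects of the proposed groupoid are left Hopf algebroids, and a morphism $\cL\to\cL'$ is an invertible left $2$-cocycle $\Gamma$ on $\cL$ together with the identification $\cL'=\cL^\Gamma$; so $\Gamma$ has source $\cL$ and target $\cL^\Gamma$, and $\Sigma\circ\Gamma$ of Lemma~\ref{compos} is defined exactly when the target $\cL^\Gamma$ of $\Gamma$ is the source of $\Sigma$. First I would record that composition lands in the right hom-set: Lemma~\ref{compos} gives that $\Sigma\circ\Gamma$ is again an invertible cocycle on $\cL$, and the Proposition preceding this theorem gives $\cL^{\Sigma\circ\Gamma}=(\cL^\Gamma)^\Sigma$, so the target of $\Sigma\circ\Gamma$ agrees with the target of $\Sigma$. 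Units will be the trivial cocycles $\hat\varepsilon$ of Definition~\ref{Lcotwist}, which satisfy $\cL^{\hat\varepsilon}=\cL$, and inverses will be the cocycles $\Gamma^{-1}$ of Theorem~\ref{thm. invertible 2 cocycle}, which are invertible cocycles on $\cL^\Gamma$ with $(\cL^\Gamma)^{\Gamma^{-1}}=\cL$ (hence a morphism $\cL^\Gamma\to\cL$) and with $(\Gamma^{-1})^{-1}=\Gamma$ by Proposition~\ref{prop. twist forward}.

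The efficient way to verify the remaining axioms is to pass to the coherence maps $\Omega\mapsto\Omega^{\#}$, under which composition, units and inverses are governed by the already-established formulas
\[(\Sigma\circ\Gamma)^{\#}=\GH\circ\Sigma^{\#},\qquad \hat\varepsilon^{\#}=\id,\qquad (\Gamma^{-1})^{\#}=(\GH)^{-1},\]
the first from Lemma~\ref{compos}, the second being immediate from the counit axiom of the codiagonal comodule $M\otimes_B N$, and the third from $\Sigma^{\#-1}=\GH$ in Theorem~\ref{thm. invertible 2 cocycle}. Granting that $\Omega\mapsto\Omega^{\#}$ is injective (see below), each groupoid axiom reduces to a trivial identity among invertible linear maps. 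For associativity, both $((\Theta\circ\Sigma)\circ\Gamma)$ and $(\Theta\circ(\Sigma\circ\Gamma))$ are cocycles on $\cL$ with the common target $((\cL^\Gamma)^\Sigma)^\Theta$ (two applications of the target formula) and both have coherence map $\GH\circ\Sigma^{\#}\circ\Theta^{\#}$, hence they coincide. For the right unit, $(\Gamma\circ\hat\varepsilon)^{\#}=\hat\varepsilon^{\#}\circ\GH=\GH$ gives $\Gamma\circ\hat\varepsilon=\Gamma$; for the inverses, $(\Gamma^{-1}\circ\Gamma)^{\#}=\GH\circ(\GH)^{-1}=\id=\hat\varepsilon^{\#}$ and $(\Gamma\circ\Gamma^{-1})^{\#}=(\GH)^{-1}\circ\GH=\id=\hat\varepsilon^{\#}$ give $\Gamma^{-1}\circ\Gamma=\hat\varepsilon_\cL$ and $\Gamma\circ\Gamma^{-1}=\hat\varepsilon_{\cL^\Gamma}$. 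The left unit is then formal: $\hat\varepsilon_{\cL^\Gamma}\circ\Gamma=(\Gamma\circ\Gamma^{-1})\circ\Gamma=\Gamma\circ(\Gamma^{-1}\circ\Gamma)=\Gamma\circ\hat\varepsilon_\cL=\Gamma$, using associativity, the two inverse relations and the right unit.

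I would also note that the two inverse relations can be read off directly, without injectivity, as the two displayed identities just before Lemma~\ref{compos}: unwinding the formula of Lemma~\ref{compos} with the module action $a\CG Z=\Gamma(a,Z\o)Z\t$ of Proposition~\ref{prop. twisted bimodule structures} and then applying cocycle axiom (1) of Definition~\ref{Lcotwist} to the element $a=\Gamma^{-1}(X\ro,Y\ro)\in B$ turns $\Gamma^{-1}\circ\Gamma(X,Y)$ into the left-hand side of the first displayed identity, which equals $\varepsilon(XY)=\hat\varepsilon_\cL(X,Y)$; the second displayed identity likewise yields $\Gamma\circ\Gamma^{-1}=\hat\varepsilon_{\cL^\Gamma}$. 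The right unit admits a similar direct proof from the recovery identity below.

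The main obstacle is the injectivity of $\Omega\mapsto\Omega^{\#}$, i.e. recovering a cocycle from its coherence map. The tool is the identity
\[\Omega(X,Y)=\Omega(X\o,Y\o)\,\varepsilon(X\t Y\t),\]
obtained by setting $Z=1_\cL$ in cocycle axiom (1) (the right-hand side being $\varepsilon(\Omega(X\o,Y\o)X\t Y\t)$): it exhibits $\Omega(X,Y)$ as $\varepsilon\circ m_\cL$ applied to the representative $\Omega^{\#}_{\cL,\cL}(X\otimes Y)=\Omega(X\o,Y\o)X\t\otimes Y\t$ of the image in the regular comodule. The delicate point is that $\varepsilon\circ m_\cL$ need not descend to the balanced tensor product, so one must verify that this evaluation is independent of the chosen representative; the faithful flatness assumed throughout the paper, which already forces the relevant Takeuchi products to behave well, is what makes this work. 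Should this route prove awkward, the fallback is to establish associativity by a direct Sweedler computation of both nested expressions from the formula of Lemma~\ref{compos}, reorganising the iterated cocycle evaluations by coassociativity of $\Delta^\Gamma$ and by Proposition~\ref{prop. cocomute coaction}, exactly in the style of the proof that $\Sigma\circ\Gamma$ is a cocycle.
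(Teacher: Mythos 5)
Your bookkeeping of the groupoid data is correct, and your observation that the inverse laws $\Gamma^{-1}\circ\Gamma=\hat{\varepsilon}_{\cL}$ and $\Gamma\circ\Gamma^{-1}=\hat{\varepsilon}_{\cL^{\Gamma}}$ can be read off by unwinding Lemma~\ref{compos} against the two displayed identities preceding it (i.e.\ Propositions~\ref{prop. left inverse} and~\ref{prop. right inverse}) is precisely how the paper handles inverses. The genuine gap is in your primary route to associativity. Reducing everything to coherence maps via $(\Sigma\circ\Gamma)^{\#}=\GH\circ\Sigma^{\#}$ only yields $(\Theta\circ\Sigma)\circ\Gamma=\Theta\circ(\Sigma\circ\Gamma)$ if $\Omega\mapsto\Omega^{\#}$ is injective, and your candidate proof of injectivity does not close. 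The identity $\Omega(X,Y)=\Omega(X\o,Y\o)\varepsilon(X\t Y\t)$ is correct, but to extract it from the class $\Omega^{\#}_{\cL,\cL}(X\ot Y)$ in the balanced tensor product you need $\varepsilon\circ m_{\cL}$ to annihilate the relations $\overline{b}X\ot Y-X\ot bY$, i.e.\ $\varepsilon(t(b)XY)=\varepsilon(Xs(b)Y)$ for all $b,X,Y$. The left side is $\varepsilon(XY)b$ while the right side is $\varepsilon(Xs(b\,\varepsilon(Y)))$; already at $Y=1$ this demands $\varepsilon(Xs(b))=\varepsilon(X)b$, which fails for a general left bialgebroid over a noncommutative base (for the pair Hopf algebroid the two sides are $aba'$ and $aa'b$). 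Faithful flatness is not the relevant hypothesis here: it controls the Takeuchi products, not the balancedness of $\varepsilon\circ m_{\cL}$. So injectivity of $\Omega\mapsto\Omega^{\#}$ would need a genuinely different argument, and as written your main line does not establish associativity.

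Your fallback is exactly what the paper does: it takes a third invertible cocycle $\Pi$ on $\cL^{\Sigma\circ\Gamma}$ and verifies $\Pi\circ(\Sigma\circ\Gamma)=(\Pi\circ\Sigma)\circ\Gamma$ by a short direct computation from the defining formula, the one non-formal step being Proposition~\ref{prop. cocomute coaction}, used to exchange $\Delta^{\Sigma\circ\Gamma}$ with $\Delta^{\Gamma}$ (the same cocommutation move that proves $\Sigma\circ\Gamma$ is a cocycle in Lemma~\ref{compos}). If you carry out that computation, the rest of your write-up stands; indeed your formal derivation of the left unit law from associativity, the inverse relations and the right unit is cleaner than the paper's bare assertion that $\hat{\varepsilon^{\Gamma}}\circ\Gamma=\Gamma$.
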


\begin{proof}
     We first observe that $\hat{\varepsilon^{\Gamma}}\circ \Gamma=\Gamma$ and $\Gamma\circ \hat{\varepsilon}=\Gamma$. It is not hard to see $\Gamma\circ \Gamma^{-1}$ is the trivial 2-cocycle on $\cL^{\Gamma}$ and $\Gamma^{-1}\circ \Gamma$ is the trivial 2-cocycle on $\cL$ by Lemma~\ref{prop. left inverse} and Lemma~\ref{prop. right inverse}. Next, we show that the composition is associative. Let $\Pi$ be an invertible left 2-cocycle on $\cL^{(\Sigma\circ \Gamma)}$ (with the coproduct denoted by $\Delta^{(\Sigma\circ \Gamma)}(X)=X_{\<1\>}\ot X_{\<2\>}$).  Then
    \begin{align*}
        \Pi\circ(\Sigma\circ \Gamma)(X,Y)=&\Sigma\circ \Gamma(\Pi(X_{\<1\>}, Y_{\<1\>})\cdot_{\Pi}X_{\<2\>}, Y_{\<2\>})\\
        =&\Gamma(\Sigma(\Pi(X_{\<1\>}, Y_{\<1\>})\cdot_{\Pi}X_{\<2\>}\ro, Y_{\<2\>}\ro)\CG X_{\<2\>}\rt, Y_{\<2\>}\rt)\\
        =&\Gamma(\Sigma(\Pi(X\ro{}_{\<1\>}, Y\ro{}_{\<1\>})\cdot_{\Pi}X\ro{}_{\<2\>}, Y\ro{}_{\<2\>})\CG X\rt, Y\rt)\\
        =&\Gamma((\Pi\circ\Sigma)(X\ro, Y\ro)\CG X\rt, Y\rt) =(\Pi\circ\Sigma)\circ \Gamma(X,Y),
    \end{align*}
    where the 3rd step uses Proposition \ref{prop. cocomute coaction}.
\end{proof}

\subsection{Dualisation of 2-cocycles}\label{secdualcocy}

In this section, we will see that a 2-cocycle in a left bialgebroid in the sense of \cite{Xu} induces a 2-cocycle on its dual bialgebroid in our sense. 

\begin{definition}\cite{schau1}\label{def. dual pairing for left bialgeboids}
    Let $\cL$ and $\CH$ be two left bialgebroids over $B$. A dual pairing between $\cL$ and $\CH$ is a linear map $\<\  |\  \> :\cL\ot\CH\to B$ such that:
    \begin{itemize}
        \item [(1)]$\<a\Bar{b}X c\Bar{d}| \alpha\>f=a\<X|c\Bar{f}\alpha d\Bar{b}\>$;
        \item [(2)] $\<X|\alpha\beta\>=\<X\o|\alpha \overline{\<X\t|\beta\>}\>=\<\overline{\<X\t|\beta\>}X\o|\alpha \>$;
        \item [(3)] $\<XY|\alpha\>=\<X \<Y|\alpha\o\>|\alpha\t\>=\<X |\<Y|\alpha\o\>\alpha\t\>$;
        \item [(4)] $\<X|1\>=\varepsilon(X)$;
        \item [(5)]$\<1|\alpha\>=\varepsilon(\alpha)$,
    \end{itemize}
    for all $a, b, c, d, f\in B$, $\alpha, \beta\in \CH$ and $X, Y\in \cL$.
\end{definition}
Let $\cL$ be a left $B$-bialgebroid that is finitely generated as a left $\BB$-module. It is known from \cite{schau1} that its left dual $\cL^{\vee}:=\Hom_{\overline{B}-}(\cL, B)$ is a left bialgebroid. There is a canonical dual pairing between $\cL^{\vee}$ and $\cL$ given by
\[\<\alpha|X\>:=\alpha(X),\]
for all $\alpha\in \cL^{\vee}$ and $X\in \cL$. The left $B$-bialgebroid structure on $\cL^{\vee}$ is determined by (1)-(5) in Definition \ref{def. dual pairing for left bialgeboids}.

\begin{definition}\cite{Xu}
    Let $\Lambda$ be a left $B$-bialgebroid. Then $F\in \Lambda\di_{B}\Lambda$ is called a left 2-cocycle in $\Lambda$ if
    \begin{itemize}
        \item [(1)] $(\varepsilon\di_{B}\id)F=1_{\Lambda}=(\id\di_{B}\varepsilon)F$;
        \item[(2)] $(\Delta\di_{B}\id)FF^{12}=(\id\di_{B}\Delta)FF^{23}$,
    \end{itemize}
    where $F^{12}=F\ot 1\in \Lambda\di_{B}\Lambda\ot \Lambda$ and $F^{23}=1\ot F\in \Lambda\ot\Lambda\di_{B} \Lambda$. In the following, we will use the notation $F=F^\alpha \di_{B}F_{\alpha}\in \Lambda\di_{B} \Lambda$. We call $F$ an invertible left 2-cocycle in $\Lambda$, if for all left $\Lambda$-module $M, N$, the map
\[F^{\#}:M\di_{B^F}N\to M\di_{B}N,\qquad m\ot n\mapsto F^{\alpha}\la m\di_{B}F_{\alpha}\la n,\forall m\in M, n\in N\]
is invertible.
\end{definition}
By \cite{Xu}, given a 2-cocycle in a left $B$-bialgebroid $\Lambda$, we can construct a new left $B^F$-bialgebroid, with a twisted base algebra $B^F$ defined on the underlying vector space $B$ with a twisted product
\begin{equation}\label{starF}{a\cdot_{F}b=\varepsilon(F^\alpha a)\varepsilon(F_{\alpha}b)},\end{equation}
source and target maps
\[s^F(b)=\varepsilon(F^\alpha b)F_{\alpha}, \qquad t^{F}(b)=\overline{\varepsilon(F_{\alpha}b)}F^\alpha,\]
and coproduct
\[\Delta^{F}(\alpha)=F^{\#}{}^{-1}(\alpha\o F^{\alpha}\di_{B}\alpha\t F_{\alpha}),\]
for all $\alpha\in \Lambda$. The counit, unit and product of $\Lambda^F$ are the same as $\Lambda$.

\begin{lemma}\label{lem. 2-cocycle dual}
 Suppose we are given a dual pairing  $\<\ |\ \>$ between two left $B$-bialgebroids $\Lambda$, $\cL$. If $F$ is an  invertible left 2-cocycle in $\Lambda$ then there is an invertible left 2-cocycle on $\cL$ given by
$\Gamma_{F}(X\ot_{\overline{B}} Y)=\<F^{\alpha}|X\overline{\<F_{\alpha}|Y\>}\>$ for all $X, Y\in \cL$.
\end{lemma}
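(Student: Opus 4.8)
The plan is to check, in turn, the three defining properties of a left $2$-cocycle in Definition~\ref{Lcotwist} for $\Gamma_F$ and then its invertibility, each time transporting the corresponding property of $F$ across the pairing by means of axioms (1)--(5) of Definition~\ref{def. dual pairing for left bialgeboids}. The organising principle is the classical duality between a twist $F=F^\alpha\di_B F_\alpha$ and a $2$-cochain $\Gamma_F(X,Y)=\<F^\alpha|X\,\overline{\<F_\alpha|Y\>}\>$: axiom~(3), $\<\phi\psi|X\>=\<\phi|\<\psi|X\o\>X\t\>$, turns products in $\Lambda$ into coproducts in $\cL$, axiom~(2) turns products in $\cL$ into coproducts in $\Lambda$, and axiom~(1) slides source/target scalars through the pairing. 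I would first verify that $\Gamma_F$ is independent of the representative of $F\in\Lambda\di_B\Lambda$, factors through $\cL\ot_{\overline B}\cL$, and is left $\overline B$-linear. All three are consequences of axiom~(1) together with the antimultiplicativity of $b\mapsto\overline b$: for instance $\<F^\alpha|\overline b\,X\,\overline{\<F_\alpha|Y\>}\>=\<F^\alpha|X\,\overline{\<F_\alpha|Y\>}\>\,b$ and $\<F_\alpha|\overline b\,Y\>=\<F_\alpha|Y\>\,b$ give both the $\overline B$-linearity and the $\ot_{\overline B}$-relation.

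For the normalisation $\Gamma_F(1_\cL,X)=\varepsilon(X)=\Gamma_F(X,1_\cL)$ I would use the two counit conditions on $F$, read as $s(\varepsilon(F^\alpha))F_\alpha=1_\Lambda$ and $t(\varepsilon(F_\alpha))F^\alpha=1_\Lambda$ after the identifications $B\di_B\Lambda\cong\Lambda\cong\Lambda\di_B B$. Concretely $\Gamma_F(1_\cL,X)=\<F^\alpha|1_\cL\,\overline{\<F_\alpha|X\>}\>=\varepsilon(F^\alpha)\<F_\alpha|X\>=\<s(\varepsilon(F^\alpha))F_\alpha|X\>=\<1_\Lambda|X\>=\varepsilon(X)$, where the middle equalities use axioms~(1) and~(4) and the anchor property of $\varepsilon$, and the last step is axiom~(5); the identity $\Gamma_F(X,1_\cL)=\varepsilon(X)$ is symmetric, using the second counit condition.

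The main obstacle is the $2$-cocycle identity of Definition~\ref{Lcotwist}(1). I would derive it by pairing both sides of the cocycle equation for $F$, $(\Delta\di_B\id)F\,F^{12}=(\id\di_B\Delta)F\,F^{23}$ in $\Lambda\di_B\Lambda\di_B\Lambda$, against $X\di Y\di Z$ one tensor leg at a time. On the left, expanding $F^\alpha{}_\o F^\beta\di_B F^\alpha{}_\t F_\beta\di_B F_\alpha$ and pairing the first two legs against $X$ and $Y$, axiom~(3) converts each of the products $F^\alpha{}_\o F^\beta$ and $F^\alpha{}_\t F_\beta$ into an evaluation against a coproduct of $X$, respectively $Y$, while the $\cL$-product $Y\t Z\t$ appearing inside $\Gamma_F(Y\o,Z\o)\,Y\t Z\t$ is opened up by axiom~(2); the right-hand side is treated symmetrically, with $X$ and $Z$ interchanged, and produces $\Gamma_F(\Gamma_F(X\o,Y\o)X\t Y\t,Z)$. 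The genuinely delicate part is the bookkeeping of the $B$-valued scalars $\Gamma_F(Y\o,Z\o)$ and $\Gamma_F(X\o,Y\o)$: since $\Gamma_F$ is only $\overline B$-linear, these have to be threaded through the $\di_B$-balancing of $F$ and through the nested $\overline{\<\,\cdot\,\>}$ factors using axiom~(1), exactly in the spirit of the scalar manipulations in the proof of Theorem~\ref{thm. twisted Hopf algebroid}. Once this is carried out, both sides collapse to the single expression obtained from the paired $F$-cocycle.

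Finally, for invertibility I would pass to the functor ${}^{\cL}\CM\to{}_\Lambda\CM$ induced by the pairing, which sends a left $\cL$-comodule $(M,\delta)$, $\delta(m)=m\mo\ot m\z$, to the left $\Lambda$-module $M$ with $\phi\la m=\<\phi|m\mo\>m\z$; axioms~(3) and~(5) together with the comodule axioms show this is a well-defined monoidal functor and identify $\BG$ with Xu's twisted base $B^F$. Under it the coherence map $\GH_{M,N}$ of Lemma~\ref{lem. coherent map is BG-bilinear}, $m\ot n\mapsto\Gamma_F(m\mo,n\mo)\,m\z\ot n\z$, is carried to Xu's map $F^{\#}(m\ot n)=F^\alpha\la m\di_B F_\alpha\la n$, the two balanced tensor products matching once the induced bimodule structures are compared and $\<F^\alpha|m\mo\,\overline{\<F_\alpha|n\mo\>}\>$ is reconciled with $\<F^\alpha|m\mo\>\<F_\alpha|n\mo\>$ by axiom~(1) and the slide relation. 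Since $F$ is invertible, $F^{\#}$ is invertible for every pair of $\Lambda$-modules, hence in particular for those in the image of the functor, and therefore $\GH_{M,N}$ is invertible for all $\cL$-comodules $M,N$; thus $\Gamma_F$ is an invertible left $2$-cocycle.
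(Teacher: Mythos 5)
Your proposal is correct and follows essentially the same route as the paper: the cocycle identity is obtained by pairing the $F$-cocycle equation leg by leg against $X,Y,Z$ using axioms (2) and (3) of the dual pairing (the paper runs the same computation starting from the nested $\Gamma_F$ expression), and invertibility is deduced by transporting $\cL$-comodules to $\Lambda$-modules via $\phi\la m=\<\phi|m\mo\>m\z$ and identifying $\Gamma_F^{\#}$ with $F^{\#}$. Your explicit treatment of the normalisation condition and of the matching of the balanced tensor products $\ot_{B^{\Gamma_F}}$ versus $\di_{B^F}$ is slightly more careful than the paper, which leaves these as routine.
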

\begin{proof}

It is not hard to see that $\Gamma_{F}$ is well defined and left $\BB$-linear. That it is a  left 2-cocycle is, on the one hand,
\begin{align*}
    \Gamma_{F}(\Gamma_{F}(X\o,& Y\o)X\t Y\t, Z)\\
    =&\Gamma_{F}(\<F^{\alpha}|X\o\overline{\<F_{\alpha}|Y\o\>}\>X\t Y\t, Z)\\
    =&\<F^{\beta}|\<F^{\alpha}|X\o\overline{\<F_{\alpha}|Y\o\>}\>X\t Y\t\overline{\<F_{\beta}|Z\>}\>\\
    =&\<F^{\beta}|\<F^{\alpha}|X\o\>X\t \<F_{\alpha}|Y\o\> Y\t\overline{\<F_{\beta}|Z\>}\>\\
    =&\<F^{\beta}\o|\<F^{\alpha}|X\o\>X\t \overline{\<F^{\beta}\t| \<F_{\alpha}|Y\o\> Y\t\overline{\<F_{\beta}|Z\>}\>}\>\\
    =&\<F^{\beta}\o F^{\alpha}|X \overline{\<F^{\beta}\t F_{\alpha}|Y\overline{\<F_{\beta}|Z\>}\>}\>.
\end{align*}
On the other hand, we can similarly get
\begin{align*}
     \Gamma_{F}(X, \Gamma_{F}(Y\o, Z\o)Y\t Z\t)=\<F^{\alpha}|X \overline{\<F_{\alpha}\o F^{\beta}|Y\overline{\<F_{\alpha}\t F_{\beta}|Z\>}\>}\>.
\end{align*}
The two are equal by the 2-cocycle condition of $F$. To see that $\Gamma_{F}^{\#}$ is invertible, assume $M, N$ are left $\cL$-comodules. It is known by \cite{schau1} that $M, N$ are also left $\Lambda$-modules with left action
\[\alpha\la m=\<\alpha|m\mo\>m\z,\]
for all $m\in M$. We have
\begin{align*}
    \Gamma_{F}^{\#}(m\ot_{B}n)=&\Gamma_{F}(m\mo\ot n\mo)m\z\ot n\z=\<F^{\alpha}|m\mo\overline{\<F_{\alpha}|n\mo\>}\>m\z\ot n\z\\
    =&\<F^{\alpha}|m\mo\>m\z\ot \<F_{\alpha}|n\mo\>n\z=F^{\alpha}\la m\ot F_{\alpha}\la n,
\end{align*}
for all $m\in M$ and $n\in N$. Therefore, $\Gamma_{F}^{\#}$ is invertible since $F^{\#}$ is.
\end{proof}

\begin{theorem}\label{thm. twisted dual pairing}
    Suppose we are given a dual pairing $\<\ |\ \>$ between two left $B$-bialgebroids $\Lambda$, $\cL$ and that $\cL$ is a left Hopf algebroid. If $F$ is an invertible left 2-cocycle in $\Lambda$ then there is a  dual pairing between $\Lambda^{F}$ and $\cL^{\Gamma_{F}}$  given by
    \[[\alpha|X]=\<F^{\alpha}\alpha|X_{+}\overline{\<F_{\alpha}|X_{-}\>}\>,\qquad \forall X\in \cL, \alpha\in \Lambda.\]
\end{theorem}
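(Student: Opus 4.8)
The plan is to verify that $[\,\cdot\,|\,\cdot\,]$ satisfies the five axioms of Definition~\ref{def. dual pairing for left bialgeboids}, now with respect to the twisted bialgebroid structures on $\Lambda^F$ (Xu) and $\cL^{\Gamma_F}$ (Theorem~\ref{thm. twisted Hopf algebroid}) and over their common base. As a preliminary step I would check that the two twisted bases agree, $B^F=B^{\Gamma_F}$: unwinding Xu's product $a\cdot_F b=\varepsilon(F^\alpha a)\varepsilon(F_\alpha b)$ and comparing it with $a\cdot_{\Gamma_F}b=\Gamma_F(a,b)=\<F^\alpha|a\,\overline{\<F_\alpha|b\>}\>$ using axioms (1),(4),(5) of the original pairing reduces both to the same expression, so the target of the new pairing is unambiguous. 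The crucial bridge, already recorded inside the proof of Lemma~\ref{lem. 2-cocycle dual}, is that on any $\cL$-comodule the two coherence maps coincide, $\Gamma_F^{\#}=F^{\#}$, once a comodule is viewed as a $\Lambda$-module by $\alpha\la m=\<\alpha|m\mo\>m\z$; this is what will let me trade Xu's $F$-twists against our $\Gamma_F$-twists.

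I would dispose of the normalisation axioms first. Axiom (5) is immediate: since $\varepsilon^{\Gamma_F}(X)=\Gamma_F(X_+,X_-)=\<F^\alpha|X_+\overline{\<F_\alpha|X_-\>}\>$, we read off $[1|X]=\<F^\alpha|X_+\overline{\<F_\alpha|X_-\>}\>=\varepsilon^{\Gamma_F}(X)$. For axiom (4) I use $1_+\ot_{\BB}1_-=1\ot_{\BB}1$ from \eqref{equ. inverse lamda 4} together with the counit normalisation $(\id\di_{B}\varepsilon)F=1_\Lambda$ of $F$ and axiom (4) of the original pairing, which collapse $[\alpha|1]$ to $\varepsilon(\alpha)=\varepsilon^F(\alpha)$. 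Axiom (1), the $(B^F)^e$-bilinearity, is then routine bookkeeping: one rewrites the twisted actions $a\CG(-)$ and $(-)\CG a$ on $\Lambda^F$ and $\cL^{\Gamma_F}$ in terms of the original source and target maps and the cocycles, then matches the two sides using axiom (1) of $\<\cdot|\cdot\>$ and \eqref{equ. inverse lamda 9}--\eqref{equ. inverse lamda 10}.

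The substance is in axioms (2) and (3), which test $[\,\cdot\,|\,\cdot\,]$ against the two twisted coproducts. For axiom (3) the product $\alpha\beta$ is the undeformed product of $\Lambda^F=\Lambda$, while the coproduct is the twisted $\Delta^{\Gamma_F}X=\GH^{-1}(X\o\di_{B}X\t)$; I would expand $\GH^{-1}$ via $\Gamma_F^{\#}=F^{\#}$ and \eqref{equ. relation between old and new coproduct}, and reduce to axiom (3) of $\<\cdot|\cdot\>$ together with the left $2$-cocycle identity for $\Gamma_F$ and \eqref{equ. inverse lamda 3},\eqref{equ. inverse lamda 6}. Axiom (2) is the hard case and I expect it to be the main obstacle: here the twisted product $X\CG Y$ of \eqref{twistprod} and Xu's twisted coproduct $\Delta^{F}\alpha=F^{\#-1}(\alpha\o F^\alpha\di_{B}\alpha\t F_\alpha)$ appear simultaneously, each defined through the inverse of a coherence map, so neither side simplifies on its own. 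The strategy is to push both through the single identification $\Gamma_F^{\#}=F^{\#}$, so that the two inverse coherence maps cancel against the translation maps $X_\pm,Y_\pm$ entering \eqref{twistprod}; what then remains is exactly axiom (2) of the original pairing modulo the $2$-cocycle condition for $F$ and the Hopf-algebroid identities \eqref{equ. inverse lamda 5},\eqref{equ. inverse lamda 6}.

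A cleaner, more conceptual alternative, which I would use to organise and cross-check the above, is to prove the single identity $[\alpha|m\rmo]\,m\rz=\<\alpha|m\mo\>m\z$ for every $\cL$-comodule $M$, i.e.\ that the $\Lambda^F$-action on the $\cL^{\Gamma_F}$-comodule $M$ induced by $[\,\cdot\,|\,\cdot\,]$ is literally the original $\Lambda$-action. Granting this, axioms (2),(3),(5) become the statements that the induced functor ${}^{\cL^{\Gamma_F}}\CM\to{}_{\Lambda^F}\CM$ is unital, monoidal and multiplicative, which hold because the corresponding functor for $\<\cdot|\cdot\>$ does and because the monoidal equivalence of Lemma~\ref{lem. comodule equ} (via $\Gamma_F^{\#}$) and Xu's equivalence (via $F^{\#}$) are intertwined by $\Gamma_F^{\#}=F^{\#}$. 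The remaining obstacle is then concentrated entirely in that one identity, which again comes down to cancelling $\GH^{-1}$ against $F^{\#}$ on the twisted coaction $\delta^{\cL^{\Gamma_F}}=\GH^{-1}\circ\delta^{\cL}$.
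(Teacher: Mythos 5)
Your proposal is correct and follows essentially the same route as the paper: a direct verification of the five axioms of Definition~\ref{def. dual pairing for left bialgeboids}, with the normalisations and bilinearity dispatched first and the real work in axioms (2) and (3), where the twisted coproducts are traded away using exactly the identification you single out (the paper uses it in the form $F^{\alpha}\alpha\ro\ot F_{\alpha}\alpha\rt=\alpha\o F^{\alpha}\ot\alpha\t F_{\alpha}$, together with the reformulation $[\alpha|X]=\Gamma_{F}(\<\alpha|X\o\>X\t{}_{+},X\t{}_{-})$ as its main computational handle, plus \eqref{equ. relation between old and new coproduct} and the listed Hopf-algebroid identities). The remaining difference is only one of execution: the paper carries out the long Sweedler-notation computations that your plan defers.
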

\begin{proof}
  First, we observe that
 \[[\alpha|X]=\Gamma_{F}(\<\alpha|X\o\> X\t{}_{+}, X\t{}_{-}).\]
 We denote $\Gamma_{F}$ for brevity by $\Gamma$ in the following. We have
 \begin{align*}
     [s^{F}(b)\alpha|X]=&\Gamma(\<\varepsilon(F^{\alpha}b)F_{\alpha}\alpha|X\o\> X\t{}_{+}, X\t{}_{-})\\
     =&\Gamma(\<\<F^{\alpha}|b\>F_{\alpha}|\<\alpha|X\o\>X\t\> X\th{}_{+}, X\th{}_{-})\\
     =&\Gamma(\<F^{\alpha}|b\>\<F_{\alpha}|\<\alpha|X\o\>X\t\> X\th{}_{+}, X\th{}_{-})\\
     =&\Gamma(\<F^{\alpha}|b\overline{\<F_{\alpha}|\<\alpha|X\o\>X\t\>}\> X\th{}_{+}, X\th{}_{-})\\
     =&\Gamma(\Gamma(b, \<\alpha|X\o\>X\t) X\th{}_{+}, X\th{}_{-})\\
     =&\Gamma(b, \Gamma(\<\alpha|X\o\> X\t{}_{+}\o, X\t{}_{-}\o)X\t{}_{+}\t\, X\t{}_{-}\t)\\
     =&\Gamma(b, \Gamma(\<\alpha|X\o\> X\t{}_{+}, X\t{}_{-})) =b\CG[\alpha|X].
 \end{align*}
 Also,
\begin{align*}
     [\alpha|\overline{b}\CG X]=&\Gamma(\<\alpha|X_{+}\o\> X_{+}\t{}_{+}, \Gamma(X_{-}, b)X_{+}\t{}_{-})\\
     =&\Gamma(\<\alpha|X\o\> X\t{}_{++}, \Gamma(X\t{}_{-}, b)X\t{}_{+-})\\
     =&\Gamma(\<\alpha|X\o\> X\t{}_{+}, \Gamma(X\t{}_{-}\o, b)X\t{}_{-}\t)\\
     =&\Gamma(\Gamma(\<\alpha|X\o\> X\t{}_{+}\o, X\t{}_{-}\o)X\t{}_{+}\t X\t{}_{-}\t, b)\\
     =&\Gamma(\Gamma(\<\alpha|X\o\> X\t{}_{+}, X\t{}_{-}), b)= [\alpha|X]\CG b.
\end{align*}
Moreover, 
\begin{align*}
    [\alpha s^{F}(b)|X]=&\Gamma(\<\alpha\<F^{\alpha}|b\>F_{\alpha}|X\o\> X\t{}_{+}, X\t{}_{-})\\
    =&\Gamma(\<\alpha|\<\<F^{\alpha}|b\>F_{\alpha}|X\o\>X\t \> X\th{}_{+}, X\th{}_{-})\\
     =&\Gamma(\<\alpha|\<F^{\alpha}|b\>\,\<F_{\alpha}|X\o\>X\t \> X\th{}_{+}, X\th{}_{-})\\
      =&\Gamma(\<\alpha|\<F^{\alpha}|b\overline{\<F_{\alpha}|X\o\>}\>\,X\t \> X\th{}_{+}, X\th{}_{-})\\
    =&\Gamma(\<\alpha|\Gamma(b, X\o)X\t \> X\th{}_{+}, X\th{}_{-})=[\alpha |b\CG X].
\end{align*}
One can similarly show that $[t^{F}(a)\alpha t^{F}(b)|X]=[\alpha|X\CG b \CG\overline{a}]$. Next, we show that
 \begin{align*}
     [\alpha&|[\beta|X\ro]\CG X\rt]\\
     =&[\alpha|\Gamma(\<\beta|X\ro\o\> X\ro\t{}_{+}, X\ro\t{}_{-})\CG X\rt]\\
     =&[\alpha|\Gamma(\Gamma(\<\beta|X\ro\o\> X\ro\t{}_{+}, X\ro\t{}_{-}), X\rt\o)X\rt\t]\\
     =&[\alpha|\Gamma(\Gamma(\<\beta|X\ro\o\> X\ro\t{}_{+}\o, X\ro\t{}_{-}\o)X\ro\t{}_{+}\t X\ro\t{}_{-}\t, X\rt\o)X\rt\t]\\
     =&[\alpha|\Gamma(\<\beta|X\ro\o\> X\ro\t{}_{+}, \Gamma( X\ro\t{}_{-}\o, X\rt\o) X\ro\t{}_{-}\t X\rt\t)X\rt\th]\\
     =&[\alpha|\Gamma(\<\beta|X\ro\o\> X\ro\t{}_{++}, \Gamma( X\ro\t{}_{-}, X\rt\o) X\ro\t{}_{+-} X\rt\t)X\rt\th]\\
     =&[\alpha|\Gamma(\<\beta|X\ro{}_{+}\o\> X\ro{}_{+}\t{}_{+}, \Gamma( X\ro{}_{-}, X\rt\o) X\ro{}_{+}\t{}_{-} X\rt\t)X\rt\th]\\
     =&[\alpha|\Gamma(\<\beta|X\o\> X\t{}_{+},  X\t{}_{-} X\th)X\fo]\\
     =&[\alpha|\<\beta|X\o\>  X\t]\\
     =&\<F^{\alpha}\alpha|\<\beta|X\o\>  X\t{}_{+}\overline{\<F_{\alpha}|X\t{}_{-}\>}\>\\
     =&\<F^{\alpha}\alpha|\<\beta|X{}_{+}\o\>  X{}_{+}\t\overline{\<F_{\alpha}|X{}_{-}\>}\>\\
      =&\<F^{\alpha}\alpha\beta|X{}_{+}\overline{\<F_{\alpha}|X{}_{-}\>}\>=[\alpha\beta|X].
 \end{align*}
  where the 7th step uses (\ref{equ. relation between old and new coproduct}). Moreover, we see on the one hand
  \begin{align*}
      [\alpha&|X\CG Y]\\
      =&[\alpha|\Gamma(X\o, Y\o)X\t{}_{+} Y\t{}_{+}\overline{\Gamma(Y\t{}_{-}, X\t{}_{-})}]\\
        =&\Gamma(\<\alpha|\Gamma(X\o, Y\o)X\t{}_{+}\o Y\t{}_{+}\o\>X\t{}_{+}\t{}_{+} Y\t{}_{+}\t{}_{+}, \Gamma(Y\t{}_{-}, X\t{}_{-})Y\t{}_{+}\t{}_{-} X\t{}_{+}\t{}_{-})\\
      =&\Gamma(\<\alpha|\Gamma(X\o, Y\o)X\t Y\t\>X\th{}_{++} Y\th{}_{++}, \Gamma(Y\th{}_{-},X\th{}_{-})Y\th{}_{+-} X\th{}_{+-})\\
      =&\Gamma(\<\alpha|\Gamma(X\o, Y\o)X\t Y\t\>X\th{}_{+} Y\th{}_{+}, \Gamma(Y\th{}_{-}\o,X\th{}_{-}\o)Y\th{}_{-}\t X\th{}_{-}\t)\\
      =&\Gamma(\Gamma(\<\alpha|\Gamma(X\o, Y\o)X\t Y\t\>X\th{}_{+}\o Y\th{}_{+}\o, Y\th{}_{-}\o)X\th{}_{+}\t Y\th{}_{+}\t Y\th{}_{-}\t,  X\th{}_{-})\\
       =&\Gamma(\Gamma(\<\alpha|\Gamma(X\o, Y\o)X\t Y\t\>X\th{}_{+}\o Y\th{}_{+}, Y\th{}_{-})X\th{}_{+}\t,  X\th{}_{-})\\
       =&\Gamma(\Gamma(\<\alpha|\Gamma(X\o, Y\o)X\t Y\t\>X\th Y\th{}_{+}, Y\th{}_{-})X\fo{}_{+},  X\fo{}_{-}).
  \end{align*}
  On the other hand,
  \begin{align*}
      [\alpha\ro&|X\CG\overline{[\alpha\rt|Y]}]\\
      =&[\alpha\ro|X\CG\overline{\Gamma(\<\alpha\rt|Y\o\> Y\t{}_{+}, Y\t{}_{-})}]\\
      =&[\alpha\ro|X_{+}\overline{\Gamma(\Gamma(\<\alpha\rt|Y\o\> Y\t{}_{+}, Y\t{}_{-}), X_{-})}]\\
      =&\Gamma(\<\alpha\ro|X{}_{+}\o\>X{}_{+}\t{}_{+},\Gamma(\Gamma(\<\alpha\rt|Y\o\>Y\t{}_{+},Y\t{}_{-}),X{}_{-})X{}_{+}\t{}_{-})\\
       =&\Gamma(\<\alpha\ro|X\o\>X\t{}_{++},\Gamma(\Gamma(\<\alpha\rt|Y\o\>Y\t{}_{+},Y\t{}_{-}),X\t{}_{-})X\t{}_{+-})\\
      =&\Gamma(\<\alpha\ro|X\o\>X\t{}_{+},\Gamma(\Gamma(\<\alpha\rt|Y\o\>Y\t{}_{+},Y\t{}_{-}),X\t{}_{-}\o)X\t{}_{-}\t)\\
      =& \Gamma(\Gamma(\<\alpha\ro|X\o\>X\t,\Gamma(\<\alpha\rt|Y\o\>Y\t{}_{+},Y\t{}_{-}))X\th{}_{+},X\th{}_{-})\\
      =& \Gamma(\Gamma(\<\alpha\ro|X\o\>X\t,\Gamma(\<\alpha\rt|Y\o\>Y\t{}_{+}\o,Y\t{}_{-}\o)Y\t{}_{+}\t\,Y\t{}_{-}\t)X\th{}_{+},X\th{}_{-})\\
      =& \Gamma(\Gamma(\Gamma(\<\alpha\ro|X\o\>X\t, \<\alpha\rt|Y\o\>Y\t)X\th Y\th{}_{+},Y\th{}_{-})X\fo{}_{+},X\fo{}_{-})\\
      =& \Gamma(\Gamma(\<F^{\alpha}|\<\alpha\ro|X\o\>X\t\overline{\<F_{\alpha}|\<\alpha\rt|Y\o\>Y\t\>}\>X\th Y\th{}_{+},Y\th{}_{-})X\fo{}_{+},X\fo{}_{-})\\
      =& \Gamma(\Gamma(\<F^{\alpha}\alpha\ro|X\o\overline{\<F_{\alpha}\alpha\rt|Y\o\>}\>X\t Y\t{}_{+},Y\t{}_{-})X\th{}_{+},X\th{}_{-})\\
      =& \Gamma(\Gamma(\<\alpha\o F^{\alpha}|X\o\overline{\<\alpha\t F_{\alpha}|Y\o\>}\>X\t Y\t{}_{+},Y\t{}_{-})X\th{}_{+},X\th{}_{-})\\
      =& \Gamma(\Gamma(\<\alpha\o|\<F^{\alpha}|X\o\>X\t\overline{\<\alpha\t|\<F_{\alpha}|Y\o\>Y\t\>}\>X\th Y\th{}_{+},Y\th{}_{-})X\fo{}_{+},X\fo{}_{-})\\
       =& \Gamma(\Gamma(\<\alpha\o|\<F^{\alpha}|X\o\>X\t\overline{\<\alpha\t\,\<F_{\alpha}|Y\o\>|Y\t\>}\>X\th Y\th{}_{+},Y\th{}_{-})X\fo{}_{+},X\fo{}_{-})\\
        =& \Gamma(\Gamma(\<\alpha\o\,\overline{\<F_{\alpha}|Y\o\>}|\<F^{\alpha}|X\o\>X\t\overline{\<\alpha\t|Y\t\>}\>X\th Y\th{}_{+},Y\th{}_{-})X\fo{}_{+},X\fo{}_{-})\\
         =& \Gamma(\Gamma(\<\alpha\o|\<F^{\alpha}|X\o\>X\t\,\<F_{\alpha}|Y\o\>\,\overline{\<\alpha\t|Y\t\>}\>X\th Y\th{}_{+},Y\th{}_{-})X\fo{}_{+},X\fo{}_{-})\\
      =& \Gamma(\Gamma(\<\alpha\o|\<F^{\alpha}|X\o\overline{\<F_{\alpha}|Y\o\>}\>X\t\overline{\<\alpha\t|Y\t\>}\>X\th Y\th{}_{+},Y\th{}_{-})X\fo{}_{+},X\fo{}_{-})\\
      =& \Gamma(\Gamma(\<\alpha|
      \Gamma(X\o, Y\o)X\t Y\t\>X\th Y\th{}_{+},Y\th{}_{-})X\fo{}_{+},X\fo{}_{-}).
  \end{align*}
 Finally, we have $[\alpha|1]=\varepsilon(\alpha)$ and $ [1|X]=\<F^{\alpha}|X_{+}\overline{\<F_{\alpha}|X_{-}\>}\>=\Gamma_{F}(X_{+},X_{-})=\varepsilon^{\Gamma_{F}}(X).$
\end{proof}

\section{Quantum jet Hopf algebroids} \label{secqjet}

In this section, we construct the jet Hopf algebroid  $\CJ(B)$ (denoted by $\CJ^\infty(B)$ in the introduction) of a commutative algebra $B$ by working in the pro-object category. This includes the case of $B$ finite-dimensional and at least in this case the cotwisting theory applies to give new jet Hopf algebroid $\CJ(B)^\Gamma$ over a new base algebra $B_\Gamma$ for certain cocycle data.

\subsection{Pro-objects in  a category\cite{AGV}}
In this section, we recall some basic facts needed about the pro-object category. 
\begin{definition}\label{def. cofiltered category}
 A cofiltered category $I$ is a category such that
\begin{itemize}
    \item [(1)] $I$ is non empty;
    \item [(2)] For every objects $i,i'$ in $I$, there is an object $k$ and two morphism $f:k\to i$, $f':k\to i'$ in $I$;
    \item[(3)] For every two morphisms $g,g':j\to i$, there is an object $k$ and a morphism $f:k\to j$, such that $g\circ f=g'\circ f$.
\end{itemize}  
\end{definition}

For example, natural numbers $\mathbb{N}$ and the even natural numbers $2\mathbb{N}$ (or $n\N$ for all $n\in \N$) are cofiltered categories.

\begin{definition}
 Let $\CC$ be a category.  A pro-object in $\CC$ is a cofiltered category $I$ together with a functor $F:I\to \CC$. Let $F:I\to \mathcal{C}$, $G:K\to \mathcal{C}$ be two such pro-objects. A morphism between them  is an element of 
\[\Hom_{\textup{Pro}(\mathcal{C})}(F, G):=\varprojlim_{k}\, \varinjlim_{i}\Hom_{\mathcal{C}}(F(i),G(k)).\]
More precisely, a morphism $\phi\in \varprojlim_{k}\, \varinjlim_{i}\Hom_{\mathcal{C}}(F(i),G(k))$ is determined by the following data:  For every $k\in K$, there is an object $\varrho(k)\in I$ and a morphism $\phi_{k}:F(\varrho(k))\to G(k)$ in $\mathcal{C}$, such that for every morphism $\gamma:k'\to k$ in $K$, we have the commuting diagram:
\[\xymatrix{F(i)\ar[r]^{F(\nu')}\ar[d]_{F(\nu)}&F(\varrho(k'))\ar[r]^{\phi_{k'}} &G(k')\ar[d]_{G(\gamma)}\\
F(\varrho(k))\ar[rr]^{\phi_{k}} &&G(k),}\]
where $\nu:i\to \varrho(k)$ and $\nu':i\to \varrho(k')$ are the canonical maps in the cofiltered category $I$ as in (2) of Definition \ref{def. cofiltered category}. We will use $\{(\varrho(k), \phi_{k})\}_{k\in K}$ to denote a morphism between two pro-objects as above. We denote the category of pro-objects in $\CC$ by $\textup{Pro}(\CC)$.
\end{definition}

\begin{example}\label{exp. object as pro-object}
  If $C$ is an object in $\CC$ then it is also a pro-object in $\CC$. Indeed, for any cofiltered category $I$, we have $C:I\to\CC$  the constant funtor $C(i)=C$ for every $i\in I$.  
\end{example}

Two morphisms $\{(\varrho(k), \phi_{k})\}_{k\in K}$, $\{(\varsigma(k), \psi_{k})\}_{k\in K}\in \varprojlim_{k}\, \varinjlim_{i}\Hom_{\mathcal{C}}(F(i),G(k))$ are \textit{equivalent} if  for every $k\in K$, there exists $\gamma(k)\in I$ and a pair of morphisms $\eta:\gamma(k)\to \varrho(k)$  and $\eta':\gamma(k)\to \varsigma(k)$ such that the diagram
\[\xymatrix{&F(\gamma(k))\ar[r]^{F(\eta)}\ar[d]_{F(\eta')}&F(\varrho(k))\ar[d]_{\phi_{k}}\\
&F(\varsigma(k))\ar[r]^{\psi_{k}} &G(k)}\]
commutes. We denote the equivalence relation by $\thicksim$. Let $\Phi:=\{(\varrho(k), \phi_{k})\}_{k\in K}\in \Hom_{\textup{Pro}(\mathcal{C})}(F, G)$ as above and  $\Psi:=\{(\sigma(j), \psi_{j})\}_{j\in J}\in \Hom_{\textup{Pro}(\mathcal{C})}(G, E)$ for another pro-object $E:J\to \CC$. The composition
\[\Psi\circ \Phi:=\{(\varrho(\sigma(j)),\psi_{j}\circ \phi_{\sigma(j)})\}_{j\in J}\]
 is an element in  $\Hom_{\textup{Pro}(\mathcal{C})}(F, E)$. It is not hard to see if $\Phi\thicksim\Phi'\in \Hom_{\textup{Pro}(\mathcal{C})}(F, G)$ and $\Psi\thicksim \Psi'\in \Hom_{\textup{Pro}(\mathcal{C})}(G, E)$, then $\Psi\circ \Phi\thicksim\Psi\circ \Phi'$ and $\Psi'\circ \Phi\thicksim\Psi\circ \Phi$ in $\Hom_{\textup{Pro}(\mathcal{C})}(F, E)$. If $F:I\to \CC$ is a pro-object in $\CC$ then $\id_{F}\in \Hom_{\textup{Pro}(\mathcal{C})}(F, F)$ is defined to be $\id_{F}=\{(\lambda(i)=i, \id_{F(i)}:F(i)\to F(i))\}_{i\in I}$. Moreover, two pro-objects $F$ and $G$ are called \textit{isomorphic} in $\textup{Pro}(\CC)$ if there is $\Phi\in \Hom_{\textup{Pro}(\mathcal{C})}(F, G)$ and $\Psi\in \Hom_{\textup{Pro}(\mathcal{C})}(G, F)$, such that $\Phi\circ \Psi\thicksim \id_{G}$ and $\Psi\circ\Phi\thicksim\id_{F}$. In this case, $\Psi$ is called the \textit{inverse} of $\Phi$, and denoted $\Phi^{-1}$.

If $F:I\to \CC$ is a pro-object as above and $H:K\to I$ is a functor between two cofiltered category $I$ and $K$, then we have a pro-object $F\bullet H:K\to \CC$, where we use  $\bullet$ for the composition of functors to distinguish if from the composition between pro-objects morphisms as above. We see that there is a natural element $H^*\in \Hom_{\textup{Pro}(\mathcal{C})}(F, F\bullet H)$, given by $\{(H(k),\id_{F(H(k))})\}_{k\in K}$. Recall that $H:K\to I$ is called \textit{cofinal} if, for every object $i$ in $I$, there is an object $\lambda(i)$ in $K$ and a morphism $\gamma(i):H(\lambda(i))\to i$ in  $I$. 
\begin{lemma}\label{lem. equivalence of pro-objects}
If $F:I\to \CC$ is a pro-object and $H:K\to I$ is a cofinal functor between the cofiltered categories $I$ and $K$ as above  then the pro-objects $F$ and $F\bullet H$ are isomorphic. More precisely, the inverse of $H^*$ is given by
\[(H^*)^{-1}:=\{(\lambda(i),\phi_{i}=F(\gamma(i)):F(H(\lambda(i)))\to F(i))\}_{i\in I}.\]
\end{lemma}
\begin{proof}
  On the one hand,
    \begin{align*}
       (H^*)^{-1}\circ H^{*} =\{(H(\lambda(i)),F(\gamma(i))\circ \id_{F(H(\lambda(i)))}=F(\gamma(i)))\}_{i\in I},      
    \end{align*}
    which is  equivalent to $\id_{F}$. On the other hand,
    \begin{align*}
     H^{*}\circ(H^*)^{-1}=\{(\lambda(H(k)),\id_{F(H(k))}\circ F(\gamma(H(k)))=F(\gamma(H(k))))\}_{k\in K},      
    \end{align*}
    which is equivalent to $\id_{F\bullet H}$.
\end{proof}

\begin{example}\label{exp. equivalence of pro-objects}
 For every pro-object $F:\N\to\CC$,  the inclusion $2\N\to \N$ is a cofinal functor. We denote the composition of $F$ with the inclusion by $F^2:2\N\to\CC$. As a result, $F^2\cong F$ as pro-objects. Similar for all $n\in \N$ and $F^n$. \end{example}

\subsection{Pro-Hopf algebroids}
We now focus on pro-objects in the category of $B$-bimodules ${}_{B}\CM_{B}$ for any algebra $B$ in order to formulate a definition of pro-Hopf algebroids. Let $F,G:I\to {}_{B}\CM_{B}$ be two pro-objects and define the tensor product pro-object $F\ot_{B}G:I\to {}_{B}\CM_{B}$ by $(F\ot_{B}G)(i)=F(i)\ot_{B}G(i)$. Let $E:I\to {}_{B}\CM_{B}$ be another pro-object. We have $(E\ot_{B}F)\ot_{B}G\cong E\ot_{B}(F\ot_{B}G)$. For convience, we simply write $E\ot_{B}F\ot_{B}G$. Also, let $\Phi=\{(\lambda(i),\phi_{i})\}_{i\in I}\in \Hom_{\textup{Pro}({}_{B}\CM_{B})}(E,F)$. We can define $\Phi\ot_{B}\id_{G}\in \Hom_{\textup{Pro}({}_{B}\CM_{B})}(E\ot_{B}G,F\ot_{B}G)$ by $\{(\lambda(i),\phi_{i}\ot_{B}\id_{G(i)})\}_{i\in I}$. Similarly for $\id_{G}\ot_{B}\Phi$. We also have that $B\ot_{B}F\cong F\cong F\ot_{B} B$ as pro-objects in an obvious  way.
\begin{remark}
Since the diagonal map $I\to I\times I,i\mapsto(i,i)$ is a cofinal funtor,  rather than to define $F\ot_{B}G:I\times I\to {}_{B}\CM_{B}, (i,i')\mapsto F(i)\ot_{B}G(i')$, we can define its composition with the diagonal map as above.  
\end{remark}

\begin{definition}
    A pro-$B$-ring is a pro-object $\A:I\to {}_{B}\CM_{B}$ equipped with a morphism (the product) $m\in \Hom_{\textup{Pro}({}_{B}\CM_{B})}(\A\ot_{B}\A, \A)$ and a morphism (the unit) $\eta\in \Hom_{\textup{Pro}({}_{B}\CM_{B})}(B, \A)$ (where $B$ is a constant pro-object as in Example \ref{exp. object as pro-object} with the same $I$), such that
    \[m\circ (\id\ot_{B}m)\thicksim m\circ (m\ot_{B}\id)\in  \Hom_{\textup{Pro}({}_{B}\CM_{B})}(\A\ot_{B}\A\ot_{B}\A, \A),\]
    and
    \[ m\circ(\id\ot_{B}\eta)\thicksim\id\thicksim m\circ (\eta\ot_{B}\id)\in \Hom_{\textup{Pro}({}_{B}\CM_{B})}(\A, \A).\]
\end{definition}
In the last axiom of the definition, we have identified the pro-objects $\A$ with $\A\ot_{B}B$ and $ B\ot_{B}\A$. 
 Similarly,  one can define a pro-$B^e$-ring by replacing $B$ by $B^e$. It is obvious that a pro-$B^e$-ring $\A$ (with unit $\eta$) is a pro-$B$-ring and a pro-$\BB$-ring such that the $B$-bimodule structure and $\BB$-bimodule structure commute, so there are corresponding source and target maps given by
 \[s=\eta|_{B\ot 1}\in\Hom_{\textup{Pro}({}_{B}\CM_{B})}(B, \A),\qquad t=\eta|_{1\ot \BB}\in\Hom_{\textup{Pro}({}_{\BB}\CM_{\BB})}(\BB, \A).\]
 We also see that every pro-object in the category of left $B^e$-modules is also a pro-object in the category of $B$-bimodules, for which we still denote the balanced tensor product $\diamond_{B}$ between two pro-objects in $B$-bimodule given in this way (as for standard bialgebroids). Let $\A:I\to {}_{B}\CM_{B}$ be a pro-$B^e$-ring, we can define its Takeuchi product as a pro-object
$\A\times_{B}\A:I\to {}_{B}\CM_{B}$ by $(\A\times_{B}\A)(i)=A(i)\times_{B} A(i)$, where $A(i)\times_{B} A(i)$ is the Takeuchi product of $A(i)$. We have that $\A\times_{B}\A$ is a pro-$B^e$-ring with multiplication $m_{\A\times_{B} \A}\in \Hom_{\textup{Pro}({}_{B^{e}}\CM_{B^{e}})}((\A\times_{B} \A)\ot_{B^e}(\A\times_{B} \A), \A\times_{B} \A)$ given by
\[\{(\lambda(i),m^{i}_{1,3}\times_{B}m^{i}_{2,4}:(A(\lambda(i))\times_{B} A(\lambda(i)))\ot_{B^e}(A(\lambda(i))\times_{B} A(\lambda(i)))\to A(\lambda(i))\times_{B} A(\lambda(i)))\}_{i\in I},\]
where $\{(\lambda(i),m^{i}:A(\lambda(i))\ot_{B^e}A(\lambda(i))\to A(i))\}_{i\in I}$ is the multipication of the pro-$B^e$-ring $\A$, and $m_{1,3}^{i}$ (similarly for $m_{2,4}^{i}$) means the multiplication on the 1st and 3rd factors.

We now proceed to the coproduct side.

\begin{definition}
    A pro-B-coring is a pro-object $\mathbb{D}:I\to {}_{B}\CM_{B}$ equipped with a morphism $\Delta\in\Hom_{\textup{Pro}({}_{B}\CM_{B})}(\mathbb{D}, \mathbb{D}\ot_{B}\mathbb{D})$, and $\varepsilon\in \Hom_{\textup{Pro}({}_{B}\CM_{B})}(\mathbb{D}, B)$, such that
    \[(\id\ot_{B}\Delta)\circ \Delta\thicksim (\Delta\ot_{B}\id)\circ\Delta\in \Hom_{\textup{Pro}({}_{B}\CM_{B})}(\mathbb{D}, \mathbb{D}\ot_{B}\mathbb{D}\ot_{B}\mathbb{D}),\]
    and
    \[(\varepsilon\ot_{B}\id)\circ \Delta\thicksim \id\thicksim(\id\ot_{B}\varepsilon)\circ \Delta\in \Hom_{\textup{Pro}({}_{B}\CM_{B})}(\mathbb{D}, \mathbb{D}).\]
\end{definition}

Notice that a pro-$B$-(co)ring is {\em not} a pro-object in the category of $B$-(co)rings. However, every pro-object in the category of $B$-(co)rings is a pro-$B$-(co)ring.

\begin{definition}
    Let $B$ be an algebra, a pro-left bialgebroid over $B$ is a pro-$B^e$-ring $\BL:I\to {}_{B^e}\CM_{B^e}$ with source and target maps $s$ and $t$. Moreover, the induced pro-object in ${}_{B}\CM_{B}$ derived from the left $B^e$-module structure is a pro-coring with coproduct and counit 
    \[\Delta\in \Hom_{\textup{Pro}({}_{B}\CM_{B})}(\BL, \BL\diamond_{B}\BL),\qquad\varepsilon\in \Hom_{\textup{Pro}({}_{B}\CM_{B})}(\BL, B),\]
    such that 
    \begin{itemize}
        \item [(1)]  $\Delta\in\Hom_{\textup{Pro}({}_{B}\CM_{B})}(\BL, \BL\times_{B}\BL)$;
        \item[(2)] The coproduct satisfies
        \[\Delta\circ m_{\BL}\thicksim m_{\BL\times_{B}\BL}\circ(\Delta\ot_{B^e}\Delta)\in \Hom_{\textup{Pro}({}_{B}\CM_{B})}(\BL\ot_{B^e}\BL, \BL\times_{B}\BL),\]
        where $m_{\BL}$ is the product of the pro- $B^e$-ring $\BL$;
        \item[(3)] The counit satisfies
        \[\varepsilon\circ m_{\BL}\thicksim \varepsilon\circ (\id_{\BL}\ot_{B^e}s\circ \varepsilon)\thicksim\varepsilon\circ (\id_{\BL}\ot_{B^e}t\circ \varepsilon)\in \Hom_{\textup{Pro}({}_{B}\CM_{B})}(\BL\ot_{B^e}\BL, B).\]
    \end{itemize}
Moreover, we call $\BL$ a pro-left Hopf algebroid if 
\[(\id\diamond_{B}m_{\BL})\circ(\Delta\ot_{\BB}\id)\in \Hom_{\textup{Pro}({}_{B}\CM_{B})}(\BL\ot_{\BB}\BL, \BL\diamond_{B}\BL)\]
is invertible. We call $\BL$ a pro-anti-left Hopf algebroid if 
\[(m_{\BL_{1,3}}\diamond_{B}\id)\circ(\Delta\ot_{B}\id)\in \Hom_{\textup{Pro}({}_{B}\CM_{B})}(\BL\ot_{B}\BL, \BL\diamond_{B}\BL)\]
is invertible. We call $\BL$ a pro-Hopf algebroid if it is both a pro-left Hopf algebroid and pro-anti-left Hopf algebroid.

\end{definition}
As for pro-$B^e$-rings, a pro-Hopf algebroid is in general not a pro-object in the category of Hopf algebroids, but a pro-object in the category of Hopf algebroids is a pro-Hopf algebroid.



\subsection{Pair Hopf algebroid and jet pro-Hopf algebroid $\CJ(B)$}\label{secpair}

We are now ready to construct a certain pro-Hopf algebroid $\CJ(B)$. The starting point, which applies to any algebra $B$, not necessarily commutative, is the well-known {\em pair Hopf algebroid} $B\ot \BB$, with  the $B^e$-ring structure
\[s(a)=a\ot 1,\quad t(a)=1\ot a,\quad (a\ot a')(b\ot b')=aa'\ot b'b,\]
for all $a,a',b,b'\in B$, and the $B$-coring structure
\[\Delta(a\ot a')=a\ot 1\diamond_{B}1\ot a',\quad \varepsilon(a\ot a')=aa'.\]
It is not hard to see that $B\ot \BB$ is in fact a Hopf algebroid with
\[(a\ot a')_{+}\ot_{\overline{B}}(a\ot a')_{-}=a\ot 1\ot_{\overline{B}} a'\ot 1,\quad (a\ot a')_{[+]}\ot_{B}(a\ot a')_{[-]}=1\ot a'\ot_{B} 1\ot a.\]
 There is a left ideal of $B\ot \BB$, defined by
\[\mu_k:=\{(a\ot b)(\extd_{\textup{uni}}a_{0})\,(\extd_{\textup{uni}}a_{1})\cdots \,(\extd_{\textup{uni}}a_{k})|\forall a,b,a_0, a_1,\cdots a_k\in B\}=(\Omega^1_{\rm uni})^{k+1},\]
where $\extd_{\textup{uni}}a=1\ot a-a\ot 1\in B\ot \BB$ and $\mu=\mu_0=\Omega^1_{\rm uni}=\ker(m_{B}:B
\tens B\to B)$ is the `universal calculus' on $B$. We see that $\mu_n\subseteq\cdots \subseteq\mu_1\subseteq \mu_0$. Following \cite{Ver}, we let
\[ \CJ^{k}(B):=B^{e}/\mu_k\]
 be the $k$-th jet bundle over $B$. We  see that $\CJ^{k}(B)$ has a canonical left $B^e$-module structure.  If $B$ is commutative, $\mu_{k}$ is a 2-sided ideal. In this case, we have a sequence of algebras
\[ B\twoheadleftarrow \CJ^1(B) \twoheadleftarrow \CJ^2(B) \twoheadleftarrow \CJ^3(B) \twoheadleftarrow\cdots.\]
Moreover, we have $\mu_{k}=\mu^{k+1}$. We define a pro-object $\BJ:\N\to {}_{B}\CM_{B}$ by 
\[ \BJ(i):=\hJ^{i}(B):=\CJ^{i-1}(B)=B^e/\mu^{i},\]
 or equivalently $\BJ=\varprojlim_{i}\CJ^{i}(B)$.

\begin{theorem}\label{thm. classical jet Hopf algebroid}
   If $B$ is a commutative algebra then $\BJ$ is a pro-Hopf algebroid. We call this pro-Hopf algebroid the jet Hopf algebroid of $B$ and denote it by  $\CJ(B)$.
\end{theorem}
\begin{proof}
 We first observe that
\begin{align*}
    \Delta(\extd_{\textup{uni}}a)=&1\ot1\diamond_{B}1\ot a-a\ot1\diamond_{B}1\ot 1\\
    =&1\ot1\diamond_{B}1\ot a-1\ot1\diamond_{B}a\ot 1+1\ot a\diamond_{B}1\ot 1-a\ot1\diamond_{B}1\ot 1\\
    =&1\ot1\diamond_{B}\extd_{\textup{uni}}a+\extd_{\textup{uni}}a\diamond_{B}1\ot 1.
\end{align*}
Since  $\Delta$ is an algebra map, we have $\Delta(\mu^{2k})\subseteq \mu^{k}\diamond_{B}B^{e}+B^{e}\diamond_{B}\mu^{k}$ for every $k$. Also, $\varepsilon(\mu^{k})=0$. Therefore, we have a $B$-bimodule map induced by the coproduct of $B^e$ on the quotient spaces
\[\Delta^{k}:\hJ^{2k}(B)\to \hJ^{k}(B)\diamond_{B}\hJ^{k}(B).\]
This defines a coproduct on $\HJ$ as a morphism between pro-objects,
\[\Delta_{\HJ}=\{(\lambda(k)=2k, \Delta^{k}:\hJ^{2k}(B)\to \hJ^{k}(B)\diamond_{B}\hJ^{k}(B))\}_{k\in \N}\in \Hom_{\textup{Pro}({}_{B}\mathcal{M}_{B})}(\HJ,\HJ\diamond_{B}\HJ).\]
We first observe that $(\Delta_{\HJ}\diamond_{B}\id)\circ \Delta_{\HJ}\in \Hom_{\textup{Pro}({}_{B}\mathcal{M}_{B})}(\HJ,\HJ\diamond_{B}\HJ\diamond_{B}\HJ)$ can be given by
\[(\Delta_{\HJ}\diamond_{B}\id)\circ \Delta_{\HJ}=\{(\lambda(\lambda(k))=4k,(\Delta^{k}\diamond_{B}\id)\circ \Delta^{2k}:\hJ^{4k}\to \hJ^{k}\diamond_{B}\hJ^{k}\diamond_{B}\hJ^{2k})\}_{k\in\N},\]
where we identify $\HJ\diamond_{B}\HJ\diamond_{B}\HJ$ and $\HJ\diamond_{B}\HJ\diamond_{B}\HJ^{2}$ by Lemma~ \ref{lem. equivalence of pro-objects} (or Example \ref{exp. equivalence of pro-objects}). We also see that $(\Delta\diamond_{B}\id)\circ\Delta(\mu^{3k})\subseteq B^{e}\diamond_{B}B^{e}\diamond_{B}\mu^{k}+B^{e}\diamond_{B}\mu^{k}\diamond_{B}B^{e}+\mu^{k}\diamond_{B}B^{e}\diamond_{B}B^{e}$, so we can define a map on the corresponding quotient space
\[((\Delta\diamond_{B}\id)\circ\Delta)^{k}:\hJ^{3k}(B)\to \hJ^{k}(B)\diamond_{B}\hJ^{k}(B)\diamond_{B}\hJ^{k}(B).\]
Hence, there is a morphism $((\Delta\diamond_{B}\id)\circ \Delta))_{\HJ}\in  \Hom_{\textup{Pro}({}_{B}\mathcal{M}_{B})}(\HJ,\HJ\diamond_{B}\HJ\diamond_{B}\HJ)$ given by
\[\{(\mu(k)=3k, ((\Delta\diamond_{B}\id)\circ\Delta)^{k}:\hJ^{3k}(B)\to \hJ^{k}(B)\diamond_{B}\hJ^{k}(B)\diamond_{B}\hJ^{k}(B))\}_{k\in \N}.\]
We prove that $((\Delta\diamond_{B}\id)\circ \Delta)_{\HJ}\thicksim (\Delta_{\HJ}\diamond_{B}\id)\circ \Delta_{\HJ}$. Indeed, for all $k\in \N$, we  define $\gamma(k)=5k$ such that the diagram 
\[\xymatrix{\hJ^{\gamma(k)}(B)\ar[r]\ar[d]&\hJ^{\lambda(\lambda(k))}(B)\ar[r]^{\kern-20pt {}^{{}^{\scriptstyle (\Delta^{k}\diamond_{B}\id)\circ\Delta^{2k}}}} &\hJ^{k}(B)\diamond_{B}\hJ^{k}(B)\diamond_{B}\hJ^{2k}(B)\ar[d]_{\cong}\\
\hJ^{\mu(k)}(B)\ar[rr]^{((\Delta\diamond_{B}\id)\circ\Delta)^{k}} &&\hJ^{k}(B)\diamond_{B}\hJ^{k}(B)\diamond_{B}\hJ^{k}(B)}\]
commutes. Similarly, one can define $((\id\diamond_{B}\Delta)\circ \Delta)_{\HJ}$ and $(\id\diamond_{B}\Delta_{\HJ})\circ \Delta_{\HJ}$, and show that $((\id\diamond_{B}\Delta)\circ \Delta)_{\HJ}\thicksim (\id\diamond_{B}\Delta_{\HJ})\circ \Delta_{\HJ}$. We also have that $((\Delta\diamond_{B}\id)\circ \Delta)_{\HJ}\thicksim((\id\diamond_{B}\Delta)\circ \Delta)_{\HJ}$ since   $((\Delta\diamond_{B}\id)\circ \Delta)^{k}=((\id\diamond_{B}\Delta)\circ \Delta)^{k}$. Hence, we have that $(\Delta_{\HJ}\diamond_{B}\id)\circ \Delta_{\HJ}\thicksim (\id\diamond_{B}\Delta_{\HJ})\circ \Delta_{\HJ}$. As $\mu^{k}$ belongs to the kernel of $\varepsilon$,  we can define $\varepsilon^{k}:\hJ^{k}\to B$ on the quotient spaces. Hence, 
\[\varepsilon_{\HJ}=\{(\rho(k)=k, \varepsilon^{k}:\hJ^{k}\to B(k)=B)\}_{k\in \N}\in \Hom_{\textup{Pro}({}_{B}\CM_{B})}(\HJ,B),\]
where $B$ is a constant pro-object as in Example \ref{exp. object as pro-object}.
We have
\begin{align*}
   (\varepsilon_{\HJ}\diamond_{B}\id)\circ \Delta_{\HJ}=&\{(\lambda(k)=2k,(\varepsilon^{k}\diamond_{B}\id)\circ\Delta^{k}:\hJ^{2k}\to \hJ^{k})\}_{k\in\N} \\
   =&\{(\id_{k},\pi:\hJ^{2k}\to\hJ^{k} )\}_{k\in\N}\thicksim\id_{\HJ}.
\end{align*}
As a result, we have that $(\varepsilon_{\BJ}\diamond_{B}\id)\circ \Delta_{\HJ}\thicksim\id_{\HJ}\thicksim(\id\diamond_{B}\varepsilon_{\HJ})\circ \Delta_{\HJ}$. Since $B$ is commutative, the image of $\Delta_{\HJ}$ belongs to the Takeuchi product. $\HJ$ is clearly a pro-$B^e$-ring, as $\hJ^{k}$ are $B^e$-ring for each $k\in\N$. Writing $m^k:\hJ^{k}\ot_{B^e}\hJ^{k}$ for the product, we see that
\[\Delta^{k}\circ m^{2k}=m_{\hJ^{k}\times_{B}\hJ^{k}}\circ (\Delta^{k}\ot_{B^e}\Delta^{k}):\hJ^{2k}\ot_{B^e}\hJ^{2k}\to \hJ^{k}\diamond_{B}\hJ^{k}.\]
Hence, $\Delta_{\BJ}\circ m_{\BJ}\thicksim m_{\BJ\times_{B}\BJ}\circ(\Delta_{\BJ}\ot_{B^e}\Delta_{\BJ})$. Similarly, it is not hard to show that $\varepsilon_{\BJ}\circ m_{\BJ}\thicksim \varepsilon_{\BJ}\circ (\id_{\BJ}\ot_{B^e}s\circ \varepsilon_{\BJ})\thicksim\varepsilon_{\BJ}\circ (\id_{\BJ}\ot_{B^e}t\circ \varepsilon_{\BJ})\in \Hom_{\textup{Pro}({}_{B}\CM_{B})}(\BJ\ot_{B^e}\BJ, B)$. Thus,  we have shown that $\BJ$ is a pro-left bialgebroid.

 We now show that $\BJ$ is a pro-Hopf algebroid, by similar methods to those above. We observe that 
    \begin{align*}
        (\extd_{\textup{uni}}a)_{+}&\ot_{\overline{B}}(\extd_{\textup{uni}}a)_{-}\\
        =&1\ot1\ot_{\overline{B}}a\ot 1-a\ot1\ot_{\overline{B}}1\ot 1\\
        =&1\ot1\ot_{\overline{B}}a\ot 1-1\ot1\ot_{\overline{B}}1\ot a+1\ot a\ot_{\overline{B}}1\ot 1-a\ot1\ot_{\overline{B}}1\ot 1\\
        =&-1\ot1\ot_{\overline{B}}\extd_{\textup{uni}}a+\extd_{\textup{uni}}a\ot_{\overline{B}}1\ot 1.
    \end{align*}
By using $(\extd_{\textup{uni}}a\, \extd_{\textup{uni}}b)_{+}\ot (\extd_{\textup{uni}}a\, \extd_{\textup{uni}}b)_{-}=(\extd_{\textup{uni}}a)_{+}\,(\extd_{\textup{uni}}b)_{+}\ot (\extd_{\textup{uni}}b)_{-}\,(\extd_{\textup{uni}}a)_{-}$, we have $(\mu^{2k})_{+}\ot_{\BB}(\mu^{2k})_{-}\subseteq \mu^{k}\ot_{\overline{B}}B^{e}+B^{e}\ot_{\overline{B}}\mu^{k}$. Hence, the translation map $\alpha$ (\ref{X+-}) on $B^e$ induces maps $\alpha^{k}:\hJ^{2k}(B)\to \hJ^{k}(B)\ot_{\overline{B}}\hJ^{k}(B)$ on the quotient spaces.
Therefore, we can define the translation map $\alpha_{\BJ}$ by
\[\alpha_{\BJ}=\{(\lambda(k)=2k,\alpha^{k}:\hJ^{2k}(B)\to \hJ^{k}(B)\ot_{\overline{B}}\hJ^{k}(B))\}_{k\in\N}\in \Hom_{\textup{Pro}({}_{B}\CM_{B})}(\BJ,\BJ\ot_{\BB}\BJ).\]
Here,  $(\id\ot_{\BB}m_{\BJ})\circ(\alpha_{\BJ}\diamond_{B}\id)\in \Hom_{\textup{Pro}({}_{B}\CM_{B})}(\BJ\diamond_{B}\BJ,\BJ\ot_{\BB}\BJ)$ can be given by 
\[\{(\lambda(k)=2k, (\id\ot_{\BB}m^{k})\circ(\alpha^{k}\diamond_{B}\id):\hJ^{2k}(B)\diamond_{B}\hJ^{k}(B)\to \hJ^{k}(B)\ot_{\BB}\hJ^{k}(B))\}_{k\in\N}.\]
Similarly, $(\id\diamond_{B}m_{\BJ})\circ(\Delta_{\BJ}\ot_{\BB}\id)$ can be given by
\[\{(\lambda(k)=2k, (\id\diamond_{B}m^{k})\circ(\Delta^{k}\ot_{\BB}\id):\hJ^{2k}(B)\ot_{\BB}\hJ^{k}(B)\to \hJ^{k}(B)\diamond_{B}\hJ^{k}(B))\}_{k\in\N}.\]
The composition of these two maps is
\begin{align*}
    (\id&\diamond_{B}m_{\BJ})\circ(\Delta_{\BJ}\ot_{\BB}\id)\circ (\id\ot_{\BB}m_{\BJ})\circ(\alpha_{\BJ}\diamond_{B}\id)\\
    =&\{(\lambda(\lambda(k))=2k, (\id\diamond_{B}m^{k})\circ(\Delta^{k}\ot_{\BB}\id))\circ(\id\ot_{\BB}m^{2k})\circ(\alpha^{2k}\diamond_{B}\id):\\
    &\qquad\hJ^{4k}(B)\diamond_{B}\hJ^{k}(B)\to \hJ^{k}(B)\diamond_{B}\hJ^{k}(B)\}_{k\in\N}\\
    =&\{(\varrho(k)=4k,\pi\diamond_{B}\id:\hJ^{4k}(B)\diamond_{B}\hJ^{k}(B)\to \hJ^{k}(B)\diamond_{B}\hJ^{k}(B))\}_{k\in\N}
    \thicksim\id_{\BJ\diamond_{B}\BJ}.
\end{align*}
Similarly, $(\id\ot_{\BB}m_{\BJ})\circ(\alpha_{\BJ}\diamond_{B}\id)\circ(\id\diamond_{B}m_{\BJ})\circ(\Delta_{\BJ}\ot_{\BB}\id)\thicksim\id_{\BJ\ot_{\BB}\BJ}$. Hence $\BJ$ is a pro-left Hopf algebroid. The proof that $\BJ$ is a pro-anti-left Hopf algebroid is similar.
\end{proof}

If $B$ is finite-dimensional then $\mu_k$ and hence $\CJ^k(B)$ eventually stabilise and there is no need for pro-limits, we just take $k$ large enough. More generally, we require for this only that the descending chain $\cdots\subseteq \mu_{k+1}\subseteq \mu_k\subseteq \cdots$ stabilises, which could still of interest for our specific $\mu_k$ even if $B$ is not finite-dimensional. In this case, the pro-Hopf algebroid structures all stabilise and we can identify the pro-Hopf algebroid $\CJ(B)$ with $\CJ^k(B)$ for large enough $k$, which then becomes a usual $B$-Hopf algebroid.

\begin{remark} \label{remJ1}
We also note that for $B$ commutative and each $k$, we have
\[ 0\leftarrow \CJ^{k-1}(B) \leftarrow \CJ^k(B) \leftarrow \Omega_S^k(B)\leftarrow 0\]
as a short exact sequence, where $\Omega_S^k(B)$ is defined \cite{MaSim} as the joint kernel of all adjacent wedge products on the tensor algebra $T_B(\Omega^1(B))$. Here  $\Omega^1(B)= \mu/\mu^2$ is the space of 1-forms in classical (algebraic) geometry, so this is clear for $k=1$.  However, any sub-bimodule of $\mu=\Omega^1_{\rm uni}$ defines a space of 1-forms (or first order differential calculus)   so we are at liberty to introduce a larger differential calculus
\[ \Omega_k^1(B):={\mu/ \mu^{k+1}}\]
so that
\begin{equation}\label{JkOmegak} 0\leftarrow B \leftarrow \CJ^k(B) \leftarrow \Omega^1_k(B)\leftarrow 0\end{equation}
 is a short exact sequence and the jet prolongation map $j_k:B\to\CJ^k(B)$ can be formulated in terms of this. Here $\pi: \CJ^k(B)\to \Omega^1_k(B)$, $\pi(a\tens b)=(\extd a)b$ splits the inclusion of $\Omega_k^1(B)$ giving a projection so that $\CJ^k(B)=B\oplus \Omega^1_k(B)$ where $B=[1\tens B]$ viewed in $\CJ^k(B)$. We then define
 \[ j_k(b)=b\oplus \extd b= [1\tens b+ b\tens 1-1\tens b]=[b\tens 1]\]
  as the jet prolongation. This is a left module map and hence defined by $j_k(1)=[1\tens 1]$ mod $\CN$.  In other words, we can use a non-standard (and noncommutative) differential structure on $B$ to encode the higher order jet bundles and prolongation maps as if 1-jets with respect to this different calculus.

In the case of an algebraic group, we can translate everything to the identity and $\mu\cong B\tens B^+$ where $B^+$ is the kernel of the counit. Then $\mu^{k+1}\cong B\tens (B^+)^{k+1}$ and $\Omega^1_k(B)\cong B \tens B^+/(B^+)^{k+1}$. For example, for $B=\C[x]$, $B\tens B=\C[x,y]$, $\mu\cong \C[x]\tens \<y\>$ and $\Omega^1_k(B)=\C[x]\tens \<y\>/\<y^{k+1}\>$ is a $k$-dimensional calculus where the exterior derivative contains the $k$-fold usual derivatives.
\end{remark}

If $B$ is not commutative, Theorem~\ref{thm. classical jet Hopf algebroid} no longer applies since in general $(\extd_{\textup{uni}}a)(b\ot c)$ need not belong to the kernel of $\varepsilon$, so in general we cannot define a Hopf ideal. Even so, any first order differential calculus $\Omega^1(B)$ is given by $\mu/\CN$ for some sub-bimodule $\CN\subseteq\mu$ and we can {\em define} the associated jet bundle and jet prolongation map as
\[  \CJ^1(B) :=(B\tens B)/\CN,\quad j_1(b)=[b\tens 1].\]
As a left $B$-module, we can think of $\CJ= B^e/I$ for $I$ a left ideal of $B^e$. This is equivalent to $\CJ^1(B)=B\oplus\Omega^1(B)$ in \cite{MaSim,Flo}. Higher $\CJ^k(B)$ at this level as less clear but in view of Remark~\ref{remJ1} we may not actually need higher jet bundles since, since at this level of generality, we could obtain analogues of them by sticking with $\CJ^1(B)$ and taking a different $\CN$.

\subsection{Dual pairing with Hopf algebroid of differential operators of algebras}\label{secpairdif}

As explained in the introduction, results in \cite{Ver} imply that given a smooth manifold $M$, the $B$-module of $k$-th order differential operators is isomorphic to the dual of the sections $\CJ^{k}(B)$ of the $k$-th jet bundle, where $B=C^{\infty}(M)$. We can generalise this idea to any, possibly noncommutative, algebra $B$ as follows. Given any $b\in B$, we define $\delta_{b}:\Hom(B, B)\to \Hom(B, B)$ by
\[\delta_{b}(D)(a)=D(a)b-D(ab),\]
for all $a\in B$ and $D\in \Hom(B, B)$. We define the $k$-th order differential operators of $B$ by
\[\textup{Diff}^{\textup{k}}(B)=\{D\in \Hom(B,B)\,|\,\delta_{b_{0}}\circ\delta_{b_{1}}\cdots \circ\delta_{b_{k}}(D)=0, \forall b_{0},\cdots b_{k}\in B\}.\]
Similarly to the classical case in \cite{Ver}, we have
\begin{lemma}\label{lem. vbski}
    Let $B$ be an algebra (not necessary commutative). Then $\textup{Diff}^{\textup{k}}(B)\cong \Hom_{\BB-}(\CJ^{k}(B),B)$.
\end{lemma}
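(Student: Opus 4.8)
The plan is to realise both sides as subspaces of one common object, namely the left $\BB$-linear maps $B^e\to B$, and to translate the defining condition of each side into a condition on such maps. First I would record that since $\CJ^k(B)=B^e/\mu_k$ is a quotient of $B^e$ as a left $B^e$-module, in particular as a left $\BB$-module (with $\bar b$ acting by $t(b)$), we have
\[\Hom_{\BB-}(\CJ^{k}(B),B)=\{\psi\in\Hom_{\BB-}(B^e,B)\mid \psi|_{\mu_k}=0\},\]
where $B$ carries the left $\BB$-module structure $\bar b\cdot x=xb$. Here every element factorises as $a\ot\bar c=\bar c\cdot(a\ot 1)$, so a $\BB$-linear map $\psi$ is completely determined by $D(a):=\psi(a\ot 1)$ through $\psi(a\ot\bar c)=D(a)c$. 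Conversely any $D\in\Hom(B,B)$ extends to a $\BB$-linear $\tilde D\colon B^e\to B$, $\tilde D(a\ot\bar c)=D(a)c$. This gives a bijection $\Hom(B,B)\cong\Hom_{\BB-}(B^e,B)$, $D\leftrightarrow\tilde D$, and it is the algebraic incarnation of the classical fact that differential operators are sections of the dual jet bundle.

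The heart of the argument is the single identity
\[\tilde D\big(Y\cdot\extd_{\textup{uni}}b\big)=\widetilde{\delta_{b}(D)}(Y),\qquad\forall\,Y\in B^e,\ b\in B.\]
I would verify it on $Y=a\ot\bar c$ using $(a\ot\bar c)\,\extd_{\textup{uni}}b=a\ot\overline{bc}-ab\ot\bar c$, which yields $D(a)(bc)-D(ab)c=(\delta_b(D)(a))\,c$, and then extend by $k$-linearity in $Y$. Iterating this on the generators of $\mu_k=(\Omega^1_{\rm uni})^{k+1}$ produces
\[\tilde D\big((a\ot\bar b)\,\extd_{\textup{uni}}a_0\cdots\extd_{\textup{uni}}a_k\big)=\big(\delta_{a_0}\circ\cdots\circ\delta_{a_k}(D)\big)(a)\,b.\]
Specialising $b=1$ then shows that $\tilde D$ annihilates $\mu_k$ if and only if every $(k+1)$-fold composite $\delta_{a_0}\circ\cdots\circ\delta_{a_k}(D)$ vanishes, i.e. exactly when $D\in\textup{Diff}^{\textup{k}}(B)$.

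Finally I would assemble the pieces. The correspondence $D\mapsto\tilde D$ restricts to a bijection from $\textup{Diff}^{\textup{k}}(B)$ onto $\{\psi\mid\psi|_{\mu_k}=0\}=\Hom_{\BB-}(\CJ^{k}(B),B)$, with inverse $\psi\mapsto\psi\circ j_k$ where $j_k(a)=[a\ot 1]$; the two composites are checked to be the identity by the direct computations $D_{\phi_D}(a)=\phi_D([a\ot 1])=D(a)$ and $\phi_{D_\phi}([a\ot\bar b])=D_\phi(a)b=\phi([a\ot 1])b=\phi([a\ot\bar b])$, the latter using $\BB$-linearity of $\phi$. I expect the real obstacle to be bookkeeping rather than conceptual: one must keep the $\BB$-conventions straight (which tensor leg $\BB$ acts on, and the $\overline{bc}$ versus $\overline{cb}$ ordering in the opposite algebra), and one must confirm that the elements $(a\ot\bar b)\,\extd_{\textup{uni}}a_0\cdots\extd_{\textup{uni}}a_k$ genuinely span $\mu_k$ so that testing vanishing on them suffices. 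In the noncommutative case the commutation of $\extd_{\textup{uni}}$ past $s(B)$ needed for this spanning statement generates correction terms of the form $s([a',b])$, but these remain inside $\mu_k$ and hence do not disturb the final equivalence.
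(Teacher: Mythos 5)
Your argument is correct and follows essentially the same route as the paper: both rest on the mutually inverse assignments $D\mapsto\phi_D$ (with $\phi_D([a\ot \bar b])=D(a)b$) and $\phi\mapsto D_\phi=\phi([\,\cdot\ot 1])$, together with the inductive identity $\phi_D\big([(a\ot\bar b)\,\extd_{\textup{uni}}a_0\cdots\extd_{\textup{uni}}a_k]\big)=\big(\delta_{a_0}\circ\cdots\circ\delta_{a_k}(D)\big)(a)\,b$, which shows that vanishing on $\mu_k$ is equivalent to membership in $\textup{Diff}^{k}(B)$. Your packaging via the single identity $\tilde D(Y\cdot\extd_{\textup{uni}}b)=\widetilde{\delta_b(D)}(Y)$ on all of $\Hom_{\BB-}(B^e,B)$ is a mild streamlining (the paper runs the induction separately in each direction), and your spanning worry is moot since the paper takes the span of the elements $(a\ot\bar b)\,\extd_{\textup{uni}}a_0\cdots\extd_{\textup{uni}}a_k$ as the definition of $\mu_k$.
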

\begin{proof}
If $D\in \textup{Diff}^{\textup{k}}(B)$, we can define $\phi_D\in\Hom_{\BB-}(\CJ^{k}(B),B)$ by
\[\phi_{D}([a\ot b])=D(a)b.\]
Clearly, $\phi_{D}$ is left $\BB$-linear. We show that it factors through $\mu_k$ by the definition of a $k$-th order differential operator. Indeed, we  show the following inductively
\begin{align*}
    \phi_{D}([(b\ot b')(\extd_{\textup{uni}}b_{k})\,(\extd_{\textup{uni}}b_{k-1})\cdots \,(\extd_{\textup{uni}}b_{0})])=\delta_{b_{k}}\circ\delta_{b_{k-1}}\cdots \circ\delta_{b_{0}}(D)(b)\,b'=0.
\end{align*}
For $k=0$,
\begin{align*}
   \phi_{D}([(b\ot b')(\extd_{\textup{uni}}b_{0})])=\phi_{D}([(b\ot b')(1\ot b_{0}-b_{0}\ot 1)])\\
   =D(b)b_{0}\,b'-D(b\,b_{0})b'=\delta_{b_{0}}(D)(b)b'.
\end{align*}
Assuming the claim is true for $k=n$, we have 
\begin{align*}
     \phi_{D}([(b\ot b')(\extd_{\textup{uni}}&b_{n+1})\,(\extd_{\textup{uni}}b_{n})\cdots \,(\extd_{\textup{uni}}b_{0})])\\
     =&\phi_{D}([(b\ot b')(1\ot b_{n+1})(\extd_{\textup{uni}}b_{n})\,(\extd_{\textup{uni}}b_{n-1})\cdots \,(\extd_{\textup{uni}}b_{0})])\\
     &-\phi_{D}([(b\ot b')(b_{n+1}\ot 1)(\extd_{\textup{uni}}b_{n})\,(\extd_{\textup{uni}}b_{n-1})\cdots \,(\extd_{\textup{uni}}b_{0})])\\
     =&\delta_{b_{n}}\circ\delta_{b_{n-1}}\cdots \circ\delta_{b_{0}}(D)(b)b_{n+1}\,b'-\delta_{b_{n}}\circ\delta_{b_{n-1}}\cdots \circ\delta_{b_{0}}(D)(b\,b_{n+1})\,b'\\
     =&\delta_{b_{n+1}}\circ\delta_{b_{n}}\cdots \circ\delta_{b_{0}}(D)(b)\,b'.
\end{align*}

Conversely, let $\phi\in  \Hom_{\BB-}(\CJ^{k}(B),B)$, we  define a  $k$-th order differential operator $D_{\phi}$ by
\[D_{\phi}(a)=\phi([a\ot 1]).\]
By a similar inductive method, we see that
\begin{align*}
    \delta_{b_{0}}\circ\delta_{b_{1}}\cdots \circ\delta_{b_{k}}(D_{\phi})(b)=\phi([(b\ot 1)(\extd_{\textup{uni}}b_{0})\,(\extd_{\textup{uni}}b_{1})\cdots \,(\extd_{\textup{uni}}b_{k})])=0.
\end{align*}
For $k=0$, we have
\begin{align*}
    \delta_{b_{0}}(D_{\phi})(b)=D_{\phi}(b)b_{0}-D_{\phi}(b\,b_{0})=\phi([(b\ot 1)(\extd_{\textup{uni}}b_{0})]).
\end{align*}
Assuming this is true for $k=n$ and $\phi\in \Hom_{\BB-}(\CJ^{n}(B),B)$, we have
\begin{align*}
    \delta_{b_{n+1}}&\circ\delta_{b_{n}}\cdots \circ\delta_{b_{0}}(D_{\phi})(b)\\
    =&\delta_{b_{n}}\circ\delta_{b_{n-1}}\cdots \circ\delta_{b_{0}}(D_{\phi})(b)b_{n+1}-\delta_{b_{n}}\circ\delta_{b_{n-1}}\cdots \circ\delta_{b_{0}}(D_{\phi})(b\,b_{n+1})\\
    =&\phi([(b\ot 1)(\extd_{\textup{uni}}b_{n})\,(\extd_{\textup{uni}}b_{n-1})\cdots \,(\extd_{\textup{uni}}b_{0})])b_{n+1}\\
    &-\phi([(b\,b_{n+1}\ot 1)(\extd_{\textup{uni}}b_{n})\,(\extd_{\textup{uni}}b_{n-1})\cdots \,(\extd_{\textup{uni}}b_{0})])\\
    =&\phi([(b\ot b_{n+1})(\extd_{\textup{uni}}b_{n})\,(\extd_{\textup{uni}}b_{n-1})\cdots \,(\extd_{\textup{uni}}b_{0})])\\
    &-\phi([(b\,b_{n+1}\ot 1)(\extd_{\textup{uni}}b_{n})\,(\extd_{\textup{uni}}b_{n-1})\cdots \,(\extd_{\textup{uni}}b_{0})])\\
    =&\phi([(b\ot 1)(\extd_{\textup{uni}}b_{n+1})\,(\extd_{\textup{uni}}b_{n})\cdots \,(\extd_{\textup{uni}}b_{0})]),
\end{align*}
where the 3rd step uses the fact that $\phi$ is left $\BB$-linear.
Moreover,
\begin{align*}
    D_{\phi_{D}}(a)=\phi_{D}([a\ot 1])=D(a),
\end{align*}
and
\begin{align*}
    \phi_{D_{\phi}}([a\ot b])=D_{\phi}(a)b=\phi([a\ot 1])b=\phi([a\ot b]).
\end{align*}
\end{proof}

Next, similarly to \cite{KK, Xu}, let $B$ be a commutative algebra (which we think it as the smooth functions $C^\infty(M)$ of a smooth manifold $M$ as in \cite{Xu}). We suppose that the algebra of all differential operators $\cD(B)$ is a finitely generated projective $B$-module. In this case, it forms a  Hopf algebroid. The source and target maps are
\[s(a)(b)=ab,\qquad t(a)(b)=ba, \qquad\forall a,b\in B.\]
The product is operator composition. In addition, the coproduct and counit are given by
\[\Delta(D)(a\ot b)=D(ab),\qquad \varepsilon(D)=D(1),\]
where $\CD(B)\times_B \CD(B)\subseteq \CD(B)\diamond_B \CD(B)\subseteq \Hom(B,B)\diamond_B\Hom(B,B)\subseteq \Hom(B\tens B,B)$. The last inclusion is given by $(D\diamond_{B}D')(b\ot b')=D(b)D'(b')$ for any $D\diamond_{B}D'\in \Hom(B,B)\diamond_B\Hom(B,B)$. Moreover, by \cite{schau3}, if a Hopf algebroid $\CL$ is  left and right finitely generated projective $B$-module then its left and right duals $\Hom_{\BB-}(\CL,B)$ and $\Hom_{B-}(\CL,B)$ are also finitely generated projective Hopf algebroids.

\begin{proposition}  
Let $B$ be a commutative algebra and suppose that $\{\mu_k\}$ stabilize. If $\CJ(B)$ is a finitely generated projective $B$-module then $\CD(B)= \Hom_{\BB-}(\CJ(B),B)$ is the finitely generated projective Hopf algebroid with dual pairing given by
 \begin{align*}
        \<D|[a\ot b]\>=D(a)b,
    \end{align*}
    for all $[a\ot b]\in \CJ(B)$ and $D\in \cD(B)$.
 \end{proposition}
\begin{proof} The dual pairing between $\CD(B)$ and $\CJ(B)$ is from Lemma~\ref{lem. vbski}. First, we observe that
    \begin{align*}
         \<c\overline{d}\,D\,e\overline{f}|[a\ot b]\>g=(c\circ\overline{d}\circ D\circ e \circ\overline{f})(a)\,b\,g=c\,D(eaf)\,d\,b\,g=c\,\<D|e\,\overline{g}[a\ot b]\,f\,\overline{d}\>.
    \end{align*}
    Second, for all $[a\ot b],[c\ot d]\in \CJ(B)$ and $D\in \cD(B)$, we have on the one hand,
    \begin{align*}
        \<D|[a\ot b][c\ot d]\>=\<D|[ac\ot db]\>=D(ac)\,db.
    \end{align*}
    On the other hand
    \begin{align*}
        \<D\o|[a\ot b]\,\overline{\<D\t|[c\ot d]\>}\>=\<D\o|[a\ot D\t(c)db]\>=D\o(a)\, D\t(c)\,db=D(ac)\,db.
    \end{align*}
    Also, $\<D|[1\ot 1]\>=\varepsilon(D)$. Third, for all $[a\ot b]\in \CJ(B)$ and $D,D'\in \cD(B)$, we have 
    \begin{align*}
    \<D'\circ D|[a\ot b]\>= D'(D(a))\,b = \<D'|[D(a)\ot b]\>= \<D'|\<D|[a\ot 1]\>[1\ot b]\>.
    \end{align*}
    Also, $\<1|[a\ot b]\>=a\, b=\varepsilon([a\ot b])$.
\end{proof}

\subsection{Cotwist quantization of jet Hopf algebroids}\label{secquant}

We have constructed the sections of the $k$-th jet bundle $\CJ^{k}(B)$ and the $k$-th differential operators for any algebra $B$. However, for the jet Hopf algebroid $\CJ(B)$, we needed $B$ to be commutative (and then so is $\CJ(B)$). Hence, it remains to outline a route to possibly noncommutative examples. We do this by dualizing the construction in \cite{Xu}, where it was shown how $\cD(B)$ could be twisted by an invertible left 2-cocycle $F$, but working there in a deformation setting over power series in a deformation parameter. The data for this amounted to a certain $*$-product \[a\ast\,b=a\cdot_{F}b=\varepsilon(F^{\alpha}\,a)\varepsilon(F_{\alpha}\,b)=(F^{\alpha}\circ\,a)(1)(F_{\alpha}\circ\,b)(1)=F^{\alpha}(a)\,F_{\alpha}(b),\]
for all $a,b\in B$ and with a formal sum over $\alpha$ understood. In our case, we are working over a field, but following the same ideas, we can then dualise to obtain a deformed and possibly noncommutative jet Hopf algebroid as follows.

\begin{proposition}\label{thm:quant} Let $B$ be commutative, $\{\mu_k\}$ stabilise and $\CJ(B)$ be a finitely generated projective $B$-module.  If $F\in \CD(B)\diamond_{B}\CD(B)$ is an invertible left 2-cocycle in $\cD(B)$  with product $*$ denoting the twisted product (\ref{starF}) then there is an invertible 2-cocycle $\Gamma$ on $\CJ(B)$ given by

\[\Gamma([a\ot b], [c\ot d])=(a\ast c)\,db,\]
with inverse
\[\Gamma^{-1}([a\ot b], [c\ot d])=(ac)\ast(d\ast b).\]
Denoting $B$ with product $*$ by $B^\Gamma$,  the left $\BG$-Hopf algebroid structure on $\CJ(B)^{\Gamma}$ is 
\[s(b)=[b\ot 1],\quad t(b)=[1\ot b],\quad[a\ot b]\CG [c\ot d]=[a\ast c\ot d\ast b],\]
and
\[\Delta^{\Gamma}([a\ot b])=[a\ot 1]\diamond_{\BG}[1\ot b],\qquad \varepsilon^{\Gamma}([a\ot b])=a\ast b,\]
and
\[[a\ot b]_{\p}\ot [a\ot b]_{\m}=[a\ot 1]\ot [b\ot 1].\]
Its dual pairing with the twisted differential operators $\CD^F(B)$ is
\[\<D|[a\ot b]\>^{\Gamma}=D(a)\ast b.\]
\end{proposition}
\begin{proof}
    As $F$ is an invertible left 2-cocycle in $\cD(B)$, by Lemma \ref{lem. 2-cocycle dual}, we can construct an invertible left 2-cocycle $\Gamma$ by $\Gamma(X\ot Y)=\<F^{\alpha}|X\overline{\<F_{\alpha}|Y\>}\>$ for all $X, Y\in \CJ(B)$. More precisely,
    \begin{align*}
        \Gamma([a\ot b], [c\ot d])=&\<F^{\alpha}|[a\ot b]\overline{\<F_{\alpha}|[c\ot d]\>}\>=\<F^{\alpha}|[a\ot b]\overline{F_{\alpha}(c)d}\>\\
        =&\<F^{\alpha}|[a\ot F_{\alpha}(c)d\,b]\>=F^{\alpha}(a)F_{\alpha}(c)db=(a\ast c)\,db.
    \end{align*}
By Theorem \ref{thm. invertible 2 cocycle}, the inverse of $\Gamma$ is given by
\begin{align*}
    \Gamma^{-1}&([a\ot b], [c\ot d])\\
    =&\Gamma([a\ot b]_{+}\,[c\ot d]_{+},\Gamma([c\ot d]_{-}\o,\,[a\ot b]_{-}\o)[c\ot d]_{-}\t\,[a\ot b]_{-}\t)\\
    =&\Gamma([ac\ot 1],\Gamma([d\ot 1]\,,[b\ot 1])[1\ot 1])\\
    =&\Gamma([ac\ot 1],[d\ast\,b\ot 1])=(ac)\ast (d\ast b).
\end{align*}
For the left Hopf algebroid structure, we see firstly that
\[a\CG b=\Gamma([a\ot 1], [b\ot 1])=a\ast b.\]
We also see that
\begin{align*}
    [a\ot& b]\CG[c\ot d]\\
    =&\Gamma([a\ot b]\o,\,[c\ot d]\o)[a\ot b]\t{}_{+}\,[c\ot d]\t{}_{+}\,\overline{\Gamma([c\ot d]\t{}_{-},\,[a\ot b]\t{}_{-})}\\
     =&\Gamma([a\ot 1],\,[c\ot 1])[1\ot 1]\,[1\ot 1]\,\overline{\Gamma([d\ot 1],\,[b\ot 1])}=[a\ast c\ot d\ast b].
\end{align*}
To see the twisted coproduct is the one given above, it is sufficient to check that
\begin{align*}
    \GH([a\ot 1]\diamond_{\BG}[1\ot b])& =[a\ot 1]_{+}\overline{\Gamma([a\ot 1]_{-}, [1\ot b]\o)}\diamond_{B}[1\ot b]\t\\
    =&[a\ot 1]\overline{\Gamma([1\ot 1], [1\ot 1])}\diamond_{B}[1\ot b]=\Delta([a\ot b]).
\end{align*}
For the counit, we have
\begin{align*}
    \varepsilon^{\Gamma}([a\ot b])=\Gamma([a\ot b]_{+}, [a\ot b]_{-})=\Gamma([a\ot 1], [b\ot 1])=a\ast\,b.
\end{align*}
To see that the cotwisted left Hopf structure is the one given above, it is sufficient to check that
\begin{align*}
    \GH([a\ot 1]\ot [b\ot 1])=&[a\ot 1]_{+}\ot [b\ot 1]_{+}\overline{\Gamma([b\ot 1]_{-}, [a\ot 1]_{-})}\\
    =&[a\ot 1]\ot [b\ot 1]
    =[a\ot b]_{+}\ot [a\ot b]_{-}.
\end{align*}
Finally, using Theorem \ref{thm. twisted dual pairing}, we have
\begin{align*}
    \<D|[a\ot b]\>^{\Gamma}=&\Gamma(\<D|[a\ot b]\o\> [a\ot b]\t{}_{+},[a\ot b]\t{}_{-})\\
    =&\Gamma(\<D|[a\ot 1]\> [1\ot 1],[b\ot 1])\\
    =&\Gamma([D(a)\ot 1], [b\ot 1])=D(a)\ast\,b
\end{align*}
for the pairing. \end{proof}
\begin{remark} Note that from $B^\Gamma$ we still have a pair Hopf algebroid $\BG\ot \overline{\BG}$ but when $B^\Gamma$ is noncommutative, we will not have a 2-sided ideal from powers $\mu^k$ and hence cannot directly construct a noncommutative jet Hopf algebroid as a quotient of it. 
\end{remark}

Of course, there is no guarantee that we can find such an $F$ when $B$ is not finite-dimensional, however, the formulae in Proposition~\ref{thm:quant} involving $\Gamma$ and the construction of $\CJ(B)^\Gamma$ make sense and give a Hopf algebroid for any new product $*$ such that $\Gamma$ is well-defined. We end with a concrete example where $B$ is finite-dimensional as an easy case of the theory. More examples will be developed elsewhere.

\begin{example} Consider the cubic variety $B=k[x]/ (x^3-(a x^2+ bx +c))$ for parameters $a,b,c\in k$. 

(i) for generic $a,b,c$ and $k=\Bbb C$,  $\mu^k=\mu$ for all $k\ge 1$ and have dimension 6, hence $\CJ(B)=\CJ^0(B)=B$ as a trivial Hopf algebroid over $B$. Here $\mu$ is given by 3 relations on a 9-dimensional space and it is easy enough to write down six linearly independent elements depending on generic $a,b,c$. One then computes their products and finds that $\mu^2$ also reduces to a 6-dimensional space provided we can solve a cubic equation with parameters given in terms of $a,b,c$. In this case $\mu^2=\mu$ by dimensions, hence the result. An easy example of this generic case is $a=b=0$ and $c=1$. Then $\C[x]/(x^3-1)$ is the group algebra of $\Z/3\Z$ and by Fourier transform this is isomorphic to the algebra of functions on 3 points $i=0,1,2$. In the latter form, $\mu$ is spanned by $\delta_i\tens\delta_j$ for $i\ne j$, where $\delta_i$ are Kronecker delta-functions, from which it is immediate that $\mu^2=\mu$.

(ii) If $a=b=0$ then $\mu^\infty=\{0\}$ and $\CJ(B)=B^e$ the pair Hopf algebroid. Here $\mu$ dimension 6, $\mu^2$ has dimension 4, $\mu^3$ dimension 2 and $\mu^4$ has dimension 1 (it is spanned by $x^2 \tens x^2$). As a result, $\mu^k=\{0\}$ for $k\ge 5$.

(iii) There are special values of $a,b,c$ where $\CJ(B)$ falls between these two extremes. For example, if $a=1$ and $b=c=0$, the relation is  $x^3=x^2$ and then $\mu$ is again 6-dimensional but $\mu^2$ is 5-dimensional and $\mu^3$ is 4-dimensional, namely spanned by
\[ y^2-x^2,\quad x^2 y - x^2,\quad x y^2 - x^2,\quad  x^2 y^2 - x^2,\]
where $x:=x\tens 1$ and $y:=1\tens x$ in $B\tens B$. One then takes products by elements of $\mu$ to find that these again span a 4-dimensional space, hence $\mu^k=\mu^3$ for $k\ge 3$. Then $\CJ(B)=\CJ^2(B)= B^e/\mu^3$ is a Hopf algebroid by our results. Its structure is that of the pair Hopf algebroid $B^e$ in Section~\ref{secpair} but modulo $\mu^3$. Thus, the product and coproduct are
\[ [a\tens b][c\tens d]= [ac\tens db   ],\quad \Delta [a\tens b]= [a\tens 1]\tens_B [1\tens b]\]
etc., where $[a\tens b]$ denotes $a\tens b$ modulo $\mu^3$. One can take as representatives $1,x,x^2, xy, xy-x^2$ and determine all the structure maps explicitly. 

For cotwisting, following Theorem~\ref{thm:quant}, we need 
\[\Gamma([a\ot b],[c\ot d])=\gamma(a,c)db\]
for some linear map $\gamma:B\ot B\to B$, such that $\Gamma$ is well defined and such that $a*b:=\gamma(a,b)$ defines a unital product on the vector space of $B$ with the same $1$. Then $\Gamma$ will be a cocycle on $\CJ(B)$ in our sense. The resulting Hopf algebroid from Theorem~\ref{thm:quant} has base $B^\Gamma$ given by the new product $*$ and the product of $\CJ(B)^\Gamma$ given by 
\[[a\ot b]\cdot_{\Gamma}[c\ot d]=[a*c,d*b], \]
which again can be computed explicitly once we know $\gamma$. The coproduct is unchanged (but now over $B^\Gamma$) and the counit is $\varepsilon^{\Gamma}([a\ot b])=a*b$.  This is the general construction. In our case, we obtain a 2-parameter family of $\gamma$ or $*$-products. Namely, compatibility with $\mu^3$ so that $\Gamma$ descends leads to
\[ x*x=\alpha x^2+\beta x+\gamma,\quad x*(x^2)=(x^2)*x=(x^2)*(x^2)=x^2\]
and the trivial products with 1. For example, $0=\Gamma([y^2-x^2]\tens[b\tens d])=\gamma(1,b)x^2 d-\gamma(x^2,b)d$ for all $b,d$ tells us $(x^2)*b=x^2$ for all $b$, etc., while $x*x$ is not constrained. Associativity $(x^2)*(x*x)= (x^2)*(\alpha x^2+\beta x+\gamma)=x^2(\alpha+\beta+\gamma)=((x^2)*x)*x=x^2$ then imposes the condition 
\[ \alpha+\beta+\gamma=1.\]
In this simple example, the product $*$ remained commutative, but this will not generally be the case.
\end{example}

\subsection*{Acknowledgements}  We thank Peter Schauenburg and Ivan Tomasic for helpful discussions and, particularly,  Behrang Noohi for advice on the pro-object category. XH and SM were supported by a Leverhulme Trust project grant RPG-2024-177.

\end{document}